\newcommand{\ndo}{\node[draw,circle,inner sep=1]}
\providecommand{\U}[1]{\protect\rule{.1in}{.1in}}
\providecommand{\U}[1]{\protect\rule{.1in}{.1in}}
\providecommand{\U}[1]{\protect\rule{.1in}{.1in}}
\providecommand{\U}[1]{\protect\rule{.1in}{.1in}}
\providecommand{\U}[1]{\protect\rule{.1in}{.1in}}
\theoremstyle{theorem}
\newtheorem{Theorem}{Theorem}[section]
\newtheorem*{theoremn}{Theorem}
\newtheorem*{question}{Question}
\newtheorem{Lemma}[Theorem]{Lemma}
\newtheorem{Proposition}[Theorem]{Proposition}
\newtheorem{Corollary}[Theorem]{Corollary}
\theoremstyle{definition}
\newtheorem{Definition}[Theorem]{Definition}
\newtheorem{Remark}[Theorem]{Remark}
\numberwithin{equation}{section}
\newcommand{\arXiv}[1]{\href{http://arxiv.org/abs/#1}{arXiv:#1}}
\newcommand{\Pic}{\operatorname{Pic}}
\newcommand{\Sing}{\operatorname{Sing}}
\newcommand{\Proj}{{\mathbb P}}
\newcommand{\Aut}{\operatorname{Aut}}
\def\geq{\geqslant}
\def\bibaut#1{{\sc #1}}
\def\phi{\varphi}
\def\ro[#1]{{\textcolor{red}{#1}}}
\begin{document}

\begin{abstract}
Let $\overline{\mathcal{M}}_{g,n}$ be the moduli stack parametrizing Deligne-Mumford stable $n$-pointed genus $g$ curves and let $\overline{M}_{g,n}$ be its coarse moduli space: the Deligne-Mumford compactification of the moduli space of $n$-pointed genus $g$ smooth curves. We prove that the automorphism groups of $\overline{\mathcal{M}}_{g,n}$ and $\overline{M}_{g,n}$ are isomorphic to the symmetric group on $n$ elements $S_{n}$ for any $g,n$ such that $2g-2+n\geq 3$, and compute the remaining cases.
\end{abstract}

\title{The Automorphism group of $\overline{M}_{g,n}$}

\author[Alex Massarenti]{Alex Massarenti}
\address{\sc Alex Massarenti\\
SISSA\\
via Bonomea 265\\
34136 Trieste\\ Italy}
\email{alex.massarenti@sissa.it}

\date{\today}

\subjclass{Primary 14H10; Secondary 14D22, 14D06}
\keywords{Moduli space of curves, pointed curves, automorphisms}

\maketitle
\tableofcontents

\section*{Introduction}
The search for an object parametrizing $n$-pointed genus $g$ smooth curves is a very classical problem in algebraic geometry. In \cite{DM} \textit{P. Deligne} and \textit{D. Mumford} proved that there exists an irreducible scheme $M_{g,n}$ coarsely representing the moduli functor of $n$-pointed genus $g$ smooth curves. Furthermore they provided a compactification $\overline{M}_{g,n}$ of $M_{g,n}$ adding Deligne-Mumford stable curves as boundary points and pointed out that the obstructions to representing the moduli functor of Deligne-Mumford stable curves in the category of schemes came from automorphisms of the curves. However this moduli functor can be represented in the category of algebraic stacks, indeed there exists a smooth Deligne-Mumford algebraic stack $\overline{\mathcal{M}}_{g,n}$ parametrizing Deligne-Mumford stable curves.
The stack $\overline{\mathcal{M}}_{g,n}$ and its coarse moduli space $\overline{M}_{g,n}$ from several decades are among the most studied objects in algebraic geometry, despite this many natural questions about their biregular and birational geometry remain unanswered. In particular we are interested in the following issue:
\begin{question}
What are the automorphism groups of $\overline{M}_{g,n}$ and $\overline{\mathcal{M}}_{g,n}$ ?
\end{question}
The biregular automorphism of the moduli space $M_{g,n}$ of $n$-pointed genus $g$-stable curves and of its Deligne-Mumford compactification $\overline{M}_{g,n}$ has been studied in a series of papers, for instance \cite{BM1} and \cite{Ro}.

Recently, in \cite{BM1} and \cite{BM2}, \textit{A. Bruno} and \textit{M. Mella} studied the fibrations of $\overline{M}_{0,n}$ using its description as the closure of the subscheme of the Hilbert scheme parametrizing rational normal curves passing through $n$ points in linearly general position in $\mathbb{P}^{n-2}$ given by \textit{M. Kapranov} in \cite{Ka}. It was expected that the only possible biregular automorphism of $\overline{M}_{0,n}$ were the ones associated to a permutation of the markings. Indeed \textit{Bruno} and \textit{Mella} as a consequence of their theorem on fibrations derive that the automorphism group of $\overline{M}_{0,n}$ is the symmetric group $S_{n}$ for any $n\geq 5$ \cite[Theorem 4.3]{BM2}.

The aim of this work is to extend \cite[Theorem 4.3]{BM2} to arbitrary values of $g,n$ and to the stack $\overline{\mathcal{M}}_{g,n}$. Our main result can be stated as follows.
\begin{theoremn}
Let $\overline{\mathcal{M}}_{g,n}$ be the moduli stack parametrizing Deligne-Mumford stable $n$-pointed genus $g$ curves, and let $\overline{M}_{g,n}$ be its coarse moduli space. If $2g-2+n\geq 3$ then 
$$\Aut(\overline{\mathcal{M}}_{g,n})\cong\Aut(\overline{M}_{g,n})\cong S_{n}$$
the symmetric group on $n$ elements. For $2g-2+n < 3$ we have the following special behavior: 
\begin{itemize}
\item[-] $\Aut(\overline{M}_{1,2})\cong (\mathbb{C}^{*})^{2}$ while $\Aut(\overline{\mathcal{M}}_{1,2})$ is trivial,
\item[-] $\Aut(\overline{M}_{0,4})\cong\Aut(\overline{\mathcal{M}}_{0,4})\cong\Aut(\overline{M}_{1,1})\cong PGL(2)$ while $\Aut(\overline{\mathcal{M}}_{1,1})\cong \mathbb{C}^{*}$,
\item[-] $\Aut(\overline{M}_{g})$ and $\Aut(\overline{\mathcal{M}}_{g})$ are trivial for any $g\geq 2$.  
\end{itemize}
\end{theoremn}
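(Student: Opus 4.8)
The plan is to establish both isomorphisms by first pinning down $\Aut(\overline{M}_{g,n})$ and then transferring the answer to the stack. The homomorphism $S_{n}\hookrightarrow\Aut(\overline{M}_{g,n})$ induced by relabelling the markings is evident, and it is injective for $n\geq 2$: a nontrivial permutation $\sigma$ moves some pair $\{i,j\}$ to a different pair, hence sends the divisor parametrizing curves in which $i$ and $j$ collide onto a different boundary divisor, so $\sigma$ acts nontrivially (for $n\leq 1$ the group $S_{n}$ is trivial and there is nothing to check). The whole content is therefore surjectivity: fixing $g,n$ with $2g-2+n\geq 3$ and an arbitrary $\phi\in\Aut(\overline{M}_{g,n})$, I want to produce $\sigma\in S_{n}$ with $\phi=\sigma$.

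The first substantive step is to show that $\phi$ preserves the boundary $\partial\overline{M}_{g,n}=\overline{M}_{g,n}\setminus M_{g,n}$. The boundary is intrinsic to the biregular geometry: $\phi$ fixes the canonical class $K_{\overline{M}_{g,n}}$ and acts on $\NS(\overline{M}_{g,n})$ preserving the nef and effective cones, and the boundary divisors $\delta_{\mathrm{irr}}$ and $\delta_{h,S}$ are characterized as a distinguished generating set of extremal effective classes with prescribed numerical behaviour. Consequently $\phi$ permutes the irreducible components of $\partial\overline{M}_{g,n}$. Establishing this characterization cleanly, uniformly in $g$ and $n$, is the first technical hurdle, since in positive genus one cannot appeal to the Kapranov description used by Bruno--Mella in $\overline{M}_{0,n}$.

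Next comes the combinatorial core. Among the boundary divisors I would single out the collision divisors $\delta_{0,\{i,j\}}$, whose general point is a curve carrying a rational tail with exactly the two markings $i,j$; these are distinguished intrinsically, both by their isomorphism type $\delta_{0,\{i,j\}}\cong\overline{M}_{g,n-1}$ and by their incidence pattern with the remaining components. The mutual intersections of the $\delta_{0,\{i,j\}}$ reconstruct the complete graph on the $n$ labels, so the permutation of these divisors induced by $\phi$ yields a well-defined $\sigma\in S_{n}$. Replacing $\phi$ by $\sigma^{-1}\circ\phi$, I may assume that $\phi$ fixes each collision divisor, and hence fixes each forgetful fibration $\pi_{i}\colon\overline{M}_{g,n}\to\overline{M}_{g,n-1}$; the problem is thereby reduced to proving that such a $\phi$ is the identity.

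This rigidity step is where I expect the main difficulty. I would proceed by induction on $n$ via a single forgetful map $\pi\colon\overline{M}_{g,n}\to\overline{M}_{g,n-1}$, whose fibers are those of the universal curve: since $\phi$ commutes with $\pi$ and, by induction, induces the identity on the base, it restricts to a fiberwise automorphism, and the compatibility of these automorphisms with the section data recorded by the $\psi$-classes and the node-gluing maps forces $\phi$ to be trivial. The delicate points are precisely this fiberwise compatibility and the base cases $2g-2+n=3$ (e.g. $\overline{M}_{0,5}$, $\overline{M}_{1,3}$, $\overline{M}_{2,1}$), where the induction bottoms out and one argues directly from boundary and birational geometry as in \cite{BM2}. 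Finally I would transfer to the stack: an automorphism of $\overline{\mathcal{M}}_{g,n}$ descends to $\overline{M}_{g,n}$ by the universal property of coarse moduli, while the $S_{n}$-action lifts tautologically, and the descent map is an isomorphism in the range $2g-2+n\geq 3$ because the generic stable pointed curve then has no nontrivial automorphisms. The excluded cases $2g-2+n<3$ are treated by hand, the extra coarse automorphisms and the stack discrepancies (such as $\Aut(\overline{\mathcal{M}}_{1,1})\cong\mathbb{C}^{*}$ against the trivial $\Aut(\overline{\mathcal{M}}_{1,2})$) reflecting exactly the presence or absence of generic stabilizers of the parametrized curves.
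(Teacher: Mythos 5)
Your overall skeleton (construct a homomorphism to $S_{n}$, kill its kernel by induction on $n$, then transfer to the stack via triviality of generic stabilizers) parallels the paper, but the mechanism by which you produce the permutation is different and contains the real gap. The paper never proves boundary preservation as an input: it invokes the Gibney--Keel--Morrison fibration theorem, by which for any automorphism $\phi$ the composite $\pi_{i}\circ\phi$ is a fibration of $\overline{M}_{g,n}$ and hence factors (uniquely, by a separate lemma) through some forgetful morphism $\pi_{j_{i}}$; the assignment $i\mapsto j_{i}$ \emph{is} the permutation, and boundary preservation is deduced only afterwards as a corollary. You instead propose to first show $\phi$ preserves $\partial\overline{M}_{g,n}$ by characterizing the boundary classes inside $\NS(\overline{M}_{g,n})$ as ``a distinguished generating set of extremal effective classes with prescribed numerical behaviour.'' No such characterization is provided, and none is available in this generality: the effective cone of $\overline{M}_{g,n}$ is not understood, it has many extremal rays besides the boundary ones, so this step is an unproven (and genuinely hard) assertion rather than a technical hurdle. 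A second unjustified jump occurs even if one grants it: from ``$\phi$ fixes each collision divisor $\delta_{0,\{i,j\}}$'' you conclude that $\phi$ ``fixes each forgetful fibration.'' Preserving a divisor does not imply mapping fibers of $\pi_{i}$ to fibers of $\pi_{i}$; knowing that $\pi_{i}\circ\phi$ again factors through a forgetful map is precisely a statement about fibrations, which is what the GKM theorem supplies and your argument lacks.

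There are also substantive omissions in the cases you defer. The induction base $g=2$, $n=1$ cannot be handled by the fiberwise rigidity you describe, because every genus $2$ curve carries the hyperelliptic involution; the paper needs a separate argument there (boundary preservation for $\overline{M}_{2,1}$ deduced from $\overline{M}_{2}$ and the Hodge class, plus Royden's theorem on $M_{g,n}^{un}$ in Mochizuki's formulation), and the base case $g=1$, $n=3$ uses the explicit geometry of $\overline{M}_{1,2}$, where $\Delta_{0,2}$ is the unique contractible rational curve. Finally, the range $2g-2+n<3$ is part of the statement being proved, not a footnote: $\Aut(\overline{M}_{1,2})\cong(\mathbb{C}^{*})^{2}$ rests on the paper's theorem that $\overline{M}_{1,2}$ is a weighted blow up of $\mathbb{P}(1,2,3)$ at $[1:0:0]$, and the triviality of $\Aut(\overline{\mathcal{M}}_{1,2})$ rests on the computation that $K_{\overline{\mathcal{M}}_{1,2}}$ is a negative multiple of $\delta_{irr}$; ``treated by hand'' does not account for either. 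Your stack-transfer step is essentially the paper's (injectivity of $\Aut(\overline{\mathcal{M}}_{g,n})\rightarrow\Aut(\overline{M}_{g,n})$ when the generic curve is automorphism-free), but note that even this requires a real argument, since injectivity can fail for Deligne--Mumford stacks with trivial generic stabilizer, as the twisted-curve example cited in the paper shows.
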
 
These issues have been investigated in the Teichm\"uller-theoretic literature on the automorphism of moduli spaces $M_{g,n}$ developed in a series of papers by \textit{H.L. Royden}, \textit{C. J. Earle}, \textit{I. Kra}, \textit{M. Korkmaz}, and others, \cite{Ro}, \cite{EK} \cite{Ko}. A fundamental result, proved by \textit{Royden} in \cite{Ro}, states that the moduli space $M_{g,n}^{un}$ of genus $g$ smooth curve marked by $n$ unordered points has no non-trivial automorphism if $2g-2+n\geq 3$ which is exactly our bound.

Note that in the cases $g = n = 1$ and $g = 1, n = 2$ the automorphism group of the stack differs from that of the moduli space. This is particularly evident for $\overline{M}_{1,1}$, it is well known that $\overline{M}_{1,1}\cong \mathbb{P}^{1}$ and $\overline{\mathcal{M}}_{1,1}\cong \mathbb{P}(4,6)$. Clearly $\mathbb{P}^{1}\cong \mathbb{P}(4,6)$ as varieties, however they are not isomorphic as stacks, indeed $\mathbb{P}(4,6)$ has two stacky points with stabilizers $\mathbb{Z}_{4}$ and $\mathbb{Z}_{6}$. These two points are fixed by any automorphisms of $\mathbb{P}(4,6)$ while they are indistinguishable from any other point on the coarse moduli space $\overline{M}_{1,1}$.

The proof of the main Theorem is essentially divided into two parts: the cases $2g-2+n\geq 3$ and $2g-2+n < 3$.

When $2g-2+n\geq 3$ the main tool is \cite[Theorem 0.9]{GKM} in which \textit{A. Gibney, S. Keel} and \textit{I. Morrison} give an explicit description of the fibrations $\overline{M}_{g,n}\rightarrow X$ of $\overline{M}_{g,n}$ on a projective variety $X$ in the case $g\geq 1$. This result, combined with the triviality of the automorphism group of the generic curve of genus $g\geq 3$, let us to prove that the automorphism group of $\overline{M}_{g,1}$ is trivial for any $g\geq 3$. Since every genus $2$ curve is hyperelliptic and has a non trivial automorphism: the hyperelliptic involution, the argument used in the case $g\geq 3$ completely fails. So we adopt a different strategy: first we prove that any automorphism of $\overline{M}_{2,1}$ preserves the boundary and then we apply a famous theorem of \textit{H. L. Royden} \cite[Theorem 6.1]{Mok} to conclude that $\Aut(\overline{M}_{2,1})$ is trivial.

Then, applying \cite[Theorem 0.9]{GKM} we construct a morphism of groups between the group $\Aut(\overline{M}_{g,n})$ and $S_{n}$. Finally we generalize \textit{A. Bruno} and \textit{M. Mella}'s result proving that $\Aut(\overline{M}_{g,n})$ is indeed isomorphic to $S_{n}$ when $2g-2+n\geq 3$.

When $2g-2+n < 3$ a case by case analysis is needed. In particular the case $g = 1, n = 2$ requires an explicit description of the moduli space $\overline{M}_{1,2}$. Carefully analyzing the geometry of this surface we prove that $\overline{M}_{1,2}$ is isomorphic to a weighted blow up of $\mathbb{P}(1,2,3)$ in the point $[1:0:0]$, in particular $\overline{M}_{1,2}$ is toric. From this we derive that $\Aut(\overline{M}_{1,2})$ is isomorphic to $(\mathbb{C}^{*})^{2}$.

Finally we consider the moduli stack $\overline{\mathcal{M}}_{g,n}$. The canonical map $\overline{\mathcal{M}}_{g,n}\rightarrow\overline{M}_{g,n}$ induces a morphism or groups $\Aut(\overline{\mathcal{M}}_{g,n})\rightarrow\Aut(\overline{M}_{g,n})$. Since this morphism is injective as soon as the general $n$-pointed genus $g$ curve is automorphisms free we easily derive that the automorphism group of the stack $\overline{\mathcal{M}}_{g,n}$ is isomorphic to $S_{n}$ if $2g-2+n\geq 3$. Then we show that $\Aut(\overline{\mathcal{M}}_{1,2})$ is trivial using the fact that the canonical divisor of $\overline{\mathcal{M}}_{1,2}$ is a multiple of a boundary divisor.

This paper is organized as follows: in Section \ref{NP} we recall some basic facts about the moduli space $\overline{M}_{g,n}$ and the moduli stack $\overline{\mathcal{M}}_{g,n}$, furthermore we prove some preliminary results on the fibrations of $\overline{M}_{1,n}$, in Section \ref{sectionm12} we describe explicitly the moduli space $\overline{M}_{1,2}$, in Section \ref{a} we develop the case $2g-2+n\geq 3$, finally in Section \ref{stack} we study the automorphisms of the stack $\overline{\mathcal{M}}_{g,n}$.

\section{Notation and Preliminaries}\label{NP}
We work over the field of complex numbers. Let us recall some basic facts about the moduli space $\overline{M}_{g,n}$ parametrizing $n$-pointed stable curves of arithmetic genus $g$, and about the moduli stack $\overline{\mathcal{M}}_{g,n}$.

\subsubsection*{Nodal curves}
The arithmetic genus $g$ of a connected curve $C$ is defined as $g = h^{1}(C,\mathcal{O}_{C})$. Suppose that $C$ has at most nodal singularities. Let $C = \bigcup_{i=1}^{\gamma}C_{i}$ be the irreducible components decomposition of $C$, and set $\delta :=\sharp \Sing(C)$. Let
$$\nu:\overline{C} = \bigsqcup_{i=1}^{\gamma}\overline{C_{i}}\rightarrow C$$
be the normalization of $C$. The associated morphism $\mathcal{O}_{C}\hookrightarrow\mathcal{O}_{\overline{C}}$ on the structure sheaves yield the following sequence in cohomology
$$0\mapsto H^{0}(C,\mathcal{O}_{C})\rightarrow H^{0}(\overline{C},\mathcal{O}_{\overline{C}})\rightarrow \mathbb{C}^{\delta}\rightarrow H^{1}(C,\mathcal{O}_{C})\rightarrow H^{1}(\overline{C},\mathcal{O}_{\overline{C}})\mapsto 0.$$
We get a formula for the arithmetic genus $g$ of $C$
$$g = h^{1}(\overline{C},\mathcal{O}_{\overline{C}})+\delta-\gamma + 1 = \sum_{i=1}^{\gamma}g_{i} + \delta-\gamma + 1$$
where $g_{i} = h^{1}(\overline{C_{i}},\mathcal{O}_{\overline{C_{i}}})$ is the geometric genus of $C_{i}$.

\begin{Definition}
A \textit{stable $n$-pointed curve} is a complete connected curve $C$ that has at most nodal singularities, with an ordered collection $x_{1},\dots,x_{n}\in C$ of distinct smooth points of $C$, such that the $(n+1)$-tuple $(C,x_{1},\dots,x_{n})$ has finitely many automorphisms.
\end{Definition}

This finiteness condition is equivalent to say that every rational component of the normalization of $C$ has at least $3$ points lying over singular or marked points of $C$.\\
Moduli spaces of smooth algebraic curves have been defined and then compactified adding stable curves by \textit{Deligne} and \textit{Mumford} in \cite{DM}. Furthermore \textit{Deligne} and \textit{Mumford} proved that, if $2g-2+n>0$, there exists a coarse moduli space $\overline{M}_{g,n}$ parametrizing isomorphism classes of $n$-pointed stable curves of arithmetic genus $g$, and this space is an irreducible projective variety of dimension $3g-3+n$. 

\subsubsection*{Boundary of $\overline{M}_{g,n}$ and dual modular graphs}
The points in the boundary $\partial\overline{M}_{g,n}$ of the moduli space $\overline{M}_{g,n}$ represent isomorphisms classes of singular pointed stable curves. The geometry of such curves is encoded in a graph, called dual modular graph. The boundary has a stratification whose loci, called strata, parametrize curves of a certain topological type and with a fixed configuration of the marked points.\\
Each nodal curve has an associated graph. This allows to represent nodal curves in a very simple way and translate some issues related to nodal curves in the language of graph theory.\\
Let $C$ be a connected nodal curve with $\gamma$ irreducible components and $\delta$ nodes. The dual graph $\Gamma_{C}$ of $C$ is the graph whose vertexes represent the irreducible components of $C$ and whose edges represent nodes lying on two components.\\ 
More precisely, each irreducible component is represented by a vertex labeled by two numbers: the genus and the number of marked points of the component. An edge connecting two vertex means that the two corresponding components intersect in the node corresponding to the edge. A loop on a vertex means that the corresponding component has a self-intersection.\\
Recently, \textit{S. Maggiolo} and \textit{N. Pagani} developed a software that generates all stable dual graphs for prescribed values of $g,n$ whose detailed description can be found in \cite{MP}. We will use this package to generate graphs needed in this paper.\\
We denote by $\Delta_{irr}$ the locus in $\overline{M}_{g,n}$ parametrizing irreducible nodal curves with $n$ marked points, and by $\Delta_{i,P}$ the locus of curves with a node which divides the curve into a component of genus $i$ containing the points indexed by $P$ and a component of genus $g-i$ containing the remaining points.\\
The closures of the loci $\Delta_{irr}$ and $\Delta_{i,P}$ are the irreducible components of the boundary $\partial\overline{M}_{g,n}$, see \cite[Proposition 1.21]{Mor}. 

\subsubsection*{Forgetful morphisms}
For any $i=1,\dots,n$ there is a canonical forgetful morphism 
$$\pi_{i}:\overline{M}_{g,n}\rightarrow\overline{M}_{g,n-1}$$ 
forgetting the $i$-th marked point. If $g > 2$ and $[C,x_{1},\dots,\hat{x_{i}},\dots,x_{n}]\in\overline{M}_{g,n-1}$ is a general point the fiber 
$$\pi_{i}^{-1}([C,x_{1},\dots,\hat{x_{i}},\dots,x_{n}])\cong C$$ 
is isomorphic to $C$ and $\pi_{i}$ plays the role of the universal curve.\\ 
Note that if $n\geq 2$ the fiber $\pi_{i}^{-1}([C,x_{1},\dots,\hat{x_{i}},\dots,x_{n}])$ always intersects the boundary of $\overline{M}_{g,n}$, in fact the points of the fiber corresponding to marked points represent singular curves with two irreducible components: $C$ itself and  a $\mathbb{P}^{1}$ with two marked points and intersecting $C$ in a point.\\
In the same way for any $I\subseteq \{1,\dots,n\}$ we have a forgetful map $\pi_{I}:\overline{M}_{g,n}\rightarrow\overline{M}_{g,n-|I|}$. The map $\pi_{i}$ has sections $s_{i,j}:\overline{M}_{g,n-1}\rightarrow\overline{M}_{g,n}$ defined by sending the point $[C,x_{1},\dots,\hat{x_{i}},\dots,x_{n}]$ to the isomorphism class of the $n$-pointed genus $g$ curve obtained by attaching at $x_{j}\in C$ a $\mathbb{P}^{1}$ with two marked points labeled by $x_{i}$ and $x_{j}$.

\subsubsection*{The universal curve}
The moduli space $\overline{M}_{g,1}$ with the forgetful morphism \mbox{$\pi:\overline{M}_{g,1}\rightarrow\overline{M}_{g}$} at first glance seems to play the role of  the universal curve over $\overline{M}_{g}$.\\
However,  on closer examination one realizes that $\pi^{-1}([C])\cong C$ if and only if $[C]\in\overline{M}_{g}^{0}$ the locus of automorphisms-free curves.\\ 
It is well known that the set-theoretic fiber of $\pi:\overline{M}_{g,1}\rightarrow\overline{M}_{g}$ over $[C]\in\overline{M}_{g}$ is the quotient $C/\Aut(C)$.\\ 
For example over an open subset of $\overline{M}_{2}$ the fibration $\pi:\overline{M}_{2,1}\rightarrow\overline{M}_{2}$ is a $\mathbb{P}^{1}$-bundle and this is true even scheme-theoretically.

\begin{Remark}
The situation is different if instead  of considering the moduli space $\overline{M}_{g,1}$ we consider the Deligne-Mumford moduli stack $\overline{\mathcal{M}}_{g,1}$.\\ 
In fact,  in this case the  fiber $\pi^{-1}([C])$ is isomorphic to $C$ and via the morphism $\pi:\overline{\mathcal{M}}_{g,1}\rightarrow\overline{\mathcal{M}}_{g}$ the stack $\overline{\mathcal{M}}_{g,1}$ plays the role of the universal curve over $\overline{\mathcal{M}}_{g}$.
\end{Remark}

\subsubsection*{Divisor classes on $\overline{\mathcal{M}}_{g,n}$}
Let us briefly recall the definitions of classes $\lambda$ and $\psi_{i}$ on $\overline{\mathcal{M}}_{g,n}$. Consider the forgetful morphism $\pi:\overline{\mathcal{M}}_{g,n+1}\rightarrow\overline{\mathcal{M}}_{g,n}$ forgetting one of the marked points and its sections $\sigma_{1},\dots,\sigma_{n}:\overline{\mathcal{M}}_{g,n}\rightarrow\overline{\mathcal{M}}_{g,n+1}$.\\ 
Let $\omega_{\pi}$ be the relative dualizing sheaf of the morphism $\pi$. The Hodge class is defined as
$$\lambda := c_{1}(\pi_{*}(\omega_{\pi})).$$
The classes $\psi_{i}$ are defined as
$$\psi_{i} := \sigma_{i}^{*}(c_{1}(\omega_{\pi}))$$
for any $i = 1,\dots,n$.\\ 
Finally we denote by $\delta_{irr}$ and $\delta_{i,P}$ the boundary classes on $\overline{\mathcal{M}}_{g,n}$.

\subsubsection*{Cyclic quotient singularities}
Any cyclic quotient singularity is of the form $\mathbb{A}^{n}/\mu_{r}$, where $\mu_{r}$ is the group of $r$-roots of unit. The action $\mu_{r}\curvearrowright\mathbb{A}^{n}$ can be diagonalized, and then written in the form 
$$\mu_{r}\times\mathbb{A}^{n}\rightarrow\mathbb{A}^{n}, \: (\epsilon,x_{1},\dots,x_{n})\mapsto (\epsilon^{a_{1}}x_{1},\dots,\epsilon^{a_{n}}x_{n}),$$
for some $a_{1},\dots,a_{r}\in\mathbb{Z}/\mathbb{Z}_{r}$. The singularity is thus determined by the numbers $r,a_{1},\dots,a_{n}$.\\ 
Following the notation set by \textit{M. Reid} in \cite{Re}, we denote by $\frac{1}{r}(a_{1},\dots,a_{n})$ this type of singularity.

\subsubsection*{Fibrations of $\overline{M}_{g,n}$}
The following result by \textit{A. Gibney, S. Keel} and \textit{I. Morrison} gives an explicit description of the fibrations
$\overline{M}_{g,n}\rightarrow X$ of $\overline{M}_{g,n}$ on a projective variety $X$ in the case $g\geq 1$. We denote by $N$ the set $\{1,\dots,n\}$ of the markings, if $S\subset N$ then $S^{c}$ denotes its complement. 

\begin{Theorem}(\underline{\textit{Gibney - Keel - Morrison}})\label{GKM}
Let $D\in \Pic(\overline{M}_{g,n})$ be a nef divisor. 
\begin{itemize}
\item[-] If $g\geq 2$ either $D$ is the pull-back of a nef divisor on $\overline{M}_{g,n-1}$ via one of the forgetful morphisms or $D$ is big and the exceptional locus of $D$ is contained in $\partial\overline{M}_{g,n}$.
\item[-] If $g = 1$ either $D$ is the tensor product of pull-backs of nef divisors on $\overline{M}_{1,S}$ and $\overline{M}_{1,S^{c}}$ via the tautological projection for some subset $S\subseteq N$ or $D$ is big and the exceptional locus of $D$ is contained in $\partial\overline{M}_{g,n}$.
\end{itemize}
\end{Theorem}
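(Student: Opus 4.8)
The plan is to analyze the contraction structure attached to a nef divisor $D$ and to reduce the classification of fibrations to the interior geometry of $\overline{M}_{g,n}$. Since $D$ is nef, its volume equals its top self-intersection, so $D$ is big exactly when $D^{d}>0$ with $d=\dim\overline{M}_{g,n}$; if $D$ fails to be big, then $D$ must have degree $0$ on a family of curves covering $\overline{M}_{g,n}$, and the content of the theorem is that such a covering family is forced to be the family of fibres of a forgetful morphism (when $g\geq 2$) or of a tautological projection to $\overline{M}_{1,S}\times\overline{M}_{1,S^{c}}$ (when $g=1$), up to contributions supported on the boundary.

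I would proceed by induction on $n$, the base cases being $\overline{M}_{g}$ and $\overline{M}_{g,1}$. For the inductive step I would fix a forgetful morphism $\pi_{i}:\overline{M}_{g,n}\rightarrow\overline{M}_{g,n-1}$ and decompose the problem according to the restriction of $D$ to the fibres of $\pi_{i}$. A general fibre $F$ is, over the automorphisms-free locus, isomorphic to the corresponding genus $g$ curve $C$, so $D|_{F}$ is a nef divisor on a curve and $\deg(D|_{F})=D\cdot F\geq 0$. The dichotomy is then: either $D\cdot F=0$ for the fibres of some $\pi_{i}$, in which case $D$ is numerically trivial on those fibres and descends to a nef divisor on $\overline{M}_{g,n-1}$, giving the pull-back alternative; or $D$ has strictly positive degree on a covering family of interior curves.

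The main step, and the chief obstacle, is to upgrade the statement ``$D$ has positive degree on the fibres of every forgetful morphism'' to the conclusion that $D$ is big with exceptional locus inside $\partial\overline{M}_{g,n}$. Here I would use that the fibres of the various forgetful morphisms (respectively the tautological projections when $g=1$) sweep out $M_{g,n}$ by a spanning family of moving curves, so positivity of $D$ along all of them forces $D^{d}>0$ and confines the degeneracy locus to the boundary. The genus $1$ case diverges because the forgetful morphisms no longer account for all interior fibrations: the group structure of elliptic curves produces the tautological splitting projections, so one must replace forgetful fibres by the fibres of $\overline{M}_{1,n}\rightarrow\overline{M}_{1,S}\times\overline{M}_{1,S^{c}}$ and invoke the preliminary analysis of the fibrations of $\overline{M}_{1,n}$. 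The delicate point throughout is the rigidity of interior curves: one needs that the only covering families a nef divisor can contract are the tautological ones, which rests on the triviality of the automorphisms of the general genus $g$ curve for $g\geq 3$, together with a separate treatment of the hyperelliptic involution in genus $2$ and of the elliptic involution in genus $1$.
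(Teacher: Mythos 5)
This statement is not actually proved in the paper: Theorem \ref{GKM} is imported verbatim from \cite[Theorem 0.9]{GKM}, and the paper's only ``proof'' is the citation. The genuine proof, in Gibney--Keel--Morrison, rests on their structure theory for the nef cone of $\overline{M}_{g,n}$ (the reduction of nefness to the boundary and to the one-dimensional strata) together with explicit manipulations of the tautological classes $\lambda$, $\psi_{i}$ and the boundary classes; it is intersection-theoretic and makes no use of rigidity or automorphism-freeness of general curves. Your sketch does not reconstruct that argument, and, more seriously, both pivotal implications you rest on are false as general positivity principles, so the strategy cannot be completed as stated.

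Concretely: (i) your opening reduction, ``if $D$ fails to be big then $D$ must have degree $0$ on a family of curves covering $\overline{M}_{g,n}$,'' is false for nef divisors in general — Mumford's classical example of a ruled surface over a curve of genus at least $2$ carries a nef divisor $D$ with $D^{2}=0$ (hence not big) yet $D\cdot C>0$ for \emph{every} irreducible curve $C$; nothing forces a non-big nef divisor to kill any curve at all. (ii) The converse upgrade, ``positive degree on the fibres of all forgetful morphisms, which cover the space, forces $D^{d}>0$ with exceptional locus in the boundary,'' is equally unfounded: on a product $X=C\times S$ with $A$ ample on $S$, the pullback of $A$ to $X$ is nef and has positive degree on the covering family of curves $\{c\}\times S'$, but it is not big. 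Turning either implication into a theorem on $\overline{M}_{g,n}$ is precisely the content of the GKM result and requires their machinery; covering-family positivity alone buys nothing. (iii) For $g=1$ your plan is circular within this paper's logic: Lemma \ref{g1} on fibrations of $\overline{M}_{1,n}$ is \emph{deduced from} Theorem \ref{GKM}, so it cannot be invoked to prove it; moreover Pandharipande's example (cited in the paper) shows the forgetful-only dichotomy genuinely fails in genus $1$, so the fibre-product alternative over $\overline{M}_{1,1}$ must be produced by the proof rather than postulated. (iv) Finally, even the mild-looking descent step — $D\cdot F=0$ on general fibres of $\pi_{i}$ implies $D$ is the pullback of a nef divisor — needs a real argument (nefness gives vanishing on all components of degenerate fibres, and then a descent lemma for numerical classes along $\pi_{i}$); you assert it rather than prove it.
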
 

The above theorem will be crucial to determine the automorphism group of $\overline{M}_{g,n}$, and can be found in \cite[Theorem 0.9]{GKM}.\\ 
An immediate consequence of \ref{GKM} is that for $g \geq 2$ any fibration of $\overline{M}_{g,n}$ to a projective variety factors through a projection to some $\overline{M}_{g,i}$ with $i < n$, while $\overline{M}_{g}$ has no non-trivial fibrations. This last fact had already been shown by \textit{A. Gibney} in her Ph.D. Thesis \cite{G}.\\
Such a clear description of the fibrations of $\overline{M}_{g,n}$ is no longer true for $g = 1$, an explicit counterexample to this fact was given by \textit{R. Pandharipande} and can be found in \cite[Example A.2]{BM2}, see also \cite{Pa} for similar constructions. However, if we consider the fibrations of the type
  \[
  \begin{tikzpicture}[xscale=2.3,yscale=-1.2]
    \node (A0_0) at (0, 0) {$\overline{M}_{1,n}$};
    \node (A0_1) at (1, 0) {$\overline{M}_{1,n}$};
    \node (A0_2) at (2, 0) {$\overline{M}_{1,n-1}$};
    \path (A0_0) edge [->]node [auto] {$\scriptstyle{\phi}$} (A0_1);
    \path (A0_1) edge [->]node [auto] {$\scriptstyle{\pi_{i}}$} (A0_2);
  \end{tikzpicture}
  \]
where $\phi$ is an automorphism of $\overline{M}_{1,n}$, thanks to the second part of Theorem \ref{GKM} we can prove the following lemma.

\begin{Lemma}\label{g1}
Let $\phi$ be an automorphism of $\overline{M}_{1,n}$. Any fibration of the type $\pi_{i}\circ\phi$ factorizes through a forgetful morphism $\pi_{j}:\overline{M}_{1,n}\rightarrow\overline{M}_{1,n-1}$.
\end{Lemma}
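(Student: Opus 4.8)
The plan is to read off the fibration $\pi_i\circ\phi$ from a nef divisor and feed that divisor into the $g=1$ part of Theorem \ref{GKM}. First I would fix an ample divisor $A$ on $\overline{M}_{1,n-1}$ and set $D:=(\pi_i\circ\phi)^{*}A=\phi^{*}\pi_i^{*}A$. As the pull-back of an ample class under a morphism, $D$ is nef and semiample, and since $\pi_i\circ\phi$ is surjective with connected fibres we have $(\pi_i\circ\phi)_{*}\mathcal{O}_{\overline{M}_{1,n}}=\mathcal{O}_{\overline{M}_{1,n-1}}$, so the morphism attached to $|mD|$ for $m\gg 0$ is exactly $\pi_i\circ\phi$; in particular its general fibre is connected and one-dimensional. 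Moreover $D$ is \emph{not} big: $(\pi_i^{*}A)^{n}=\pi_i^{*}(A^{n})=0$ because $A^{n}=0$ on the $(n-1)$-dimensional variety $\overline{M}_{1,n-1}$, and the automorphism $\phi$ preserves top self-intersection.

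Since $D$ is nef but not big, the big alternative in the $g=1$ part of Theorem \ref{GKM} is excluded, so $D$ is a tensor product of pull-backs of nef divisors on $\overline{M}_{1,S}$ and $\overline{M}_{1,S^c}$ via the tautological projection $\rho_S$ attached to some $S\subseteq N$. Writing $D=\rho_S^{*}E$ for a nef class $E$ on the target of $\rho_S$, every fibre of $\rho_S$ is contracted by the fibration defined by $D$, that is by $\pi_i\circ\phi$, so the fibres of $\rho_S$ are contained in the one-dimensional fibres of $\pi_i\circ\phi$. The heart of the argument is then to pin down $S$: I would run through the admissible subsets, using the dimension formula $\dim\overline{M}_{1,m}=m$ and the genus-$0$ stability bound for rational tails, and show that the tautological projections compatible with the relative dimension one of $\pi_i\circ\phi$ are exactly the forgetful morphisms $\pi_j$ forgetting a single marking. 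I expect this combinatorial step to be the main obstacle, precisely because for $g=1$ not every fibration of $\overline{M}_{1,n}$ is tautological (the Pandharipande example): ruling out the generically finite projections and those of relative dimension at least two is what makes the identification delicate, and it is the rigid shape $\pi_i\circ\phi$, landing in $\overline{M}_{1,n-1}$ with one-dimensional fibres, that forces a single forgetful map.

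Finally, having identified $\rho_S$ with a forgetful morphism $\pi_j\colon\overline{M}_{1,n}\to\overline{M}_{1,n-1}$, I would conclude by the rigidity lemma. Every fibre of $\pi_j$ is then contracted by $\pi_i\circ\phi$, and $\pi_j$ is a proper morphism with connected fibres onto the normal variety $\overline{M}_{1,n-1}$; hence $\pi_i\circ\phi$ factors as $h\circ\pi_j$ for a morphism $h\colon\overline{M}_{1,n-1}\to\overline{M}_{1,n-1}$, which is exactly the asserted factorization through $\pi_j$.
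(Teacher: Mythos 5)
Your set-up is sound and matches the paper's starting point: pulling back an ample class under $\pi_i\circ\phi$, observing that the resulting nef divisor $D$ is not big, invoking the $g=1$ alternative of Theorem \ref{GKM} to write $D=\rho_S^{*}E$ for some tautological projection $\rho_S=\pi_{S^{c}}\times\pi_{S}$, and concluding that the fibres of $\rho_S$ are contracted by $\pi_i\circ\phi$; the closing rigidity argument is also fine. The problem is that the step you yourself flag as ``the heart of the argument'' --- pinning down $S$ --- is precisely the content of the lemma, and the method you propose for it cannot work. Dimension counting does not single out the forgetful maps: for \emph{every} $S$ with $\emptyset\neq S\subsetneq N$ the fibre product $\overline{M}_{1,S}\times_{\overline{M}_{1,1}}\overline{M}_{1,S^{c}}$ has dimension $|S|+|S^{c}|-1=n-1$, so every tautological projection $\rho_S$ has one-dimensional general fibres, exactly like $\pi_i\circ\phi$ (the paper states this explicitly: the fibres of $\pi_i$ and of $\pi_{S^{c}}\times\pi_{S}$ are both one-dimensional). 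Relative dimension one is therefore compatible with all choices of $S$, not only with singletons, and no combinatorics of stability bounds can recover the distinction.

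What closes the gap in the paper is the fact that $\phi$ is an automorphism, not merely a fibration with connected fibres. Since $\phi$ carries each fibre of $\rho_S$ isomorphically onto a fibre of $\pi_i$, and $\phi^{-1}$ does the reverse, the induced map $\overline{\phi}\colon\overline{M}_{1,S}\times_{\overline{M}_{1,1}}\overline{M}_{1,S^{c}}\to\overline{M}_{1,n-1}$ has a set-theoretic inverse $\overline{\psi}$, defined by sending $[C,x_1,\dots,x_{n-1}]$ to $\rho_S\bigl(\phi^{-1}(\pi_i^{-1}([C,x_1,\dots,x_{n-1}]))\bigr)$; thus $\overline{\phi}$ is a bijective morphism onto the normal variety $\overline{M}_{1,n-1}$, hence an isomorphism. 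It is this isomorphism $\overline{M}_{1,S}\times_{\overline{M}_{1,1}}\overline{M}_{1,S^{c}}\cong\overline{M}_{1,n-1}$ that forces $S=\{j\}$ to be a singleton, after which the factorization of $\pi_i\circ\phi$ through $\pi_j$ is immediate. Your draft uses the invertibility of $\phi$ only to get connectedness of fibres, and without exploiting it further the identification of $\rho_S$ with a forgetful morphism is out of reach by the route you describe.
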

\begin{proof}
By the second part of Theorem \ref{GKM} the fibration $\pi_{i}\circ\phi$ factorizes through a product of forgetful morphisms $\pi_{S^{c}}\times\pi_{S}:\overline{M}_{1,n}\rightarrow\overline{M}_{1,S}\times_{\overline{M}_{1,1}}\overline{M}_{1,S^{c}}$ and we have a commutative diagram
  \[
  \begin{tikzpicture}[xscale=2.9,yscale=-1.2]
    \node (A0_0) at (0, 0) {$\overline{M}_{1,n}$};
    \node (A0_1) at (1, 0) {$\overline{M}_{1,n}$};
    \node (A1_0) at (0, 1) {$\overline{M}_{1,S}\times_{\overline{M}_{1,1}}\overline{M}_{1,S^{c}}$};
    \node (A1_1) at (1, 1) {$\overline{M}_{1,n-1}$};
    \path (A0_0) edge [->]node [auto] {$\scriptstyle{\phi}$} (A0_1);
    \path (A1_0) edge [->]node [auto] {$\scriptstyle{\overline{\phi}}$} (A1_1);
    \path (A0_1) edge [->]node [auto] {$\scriptstyle{\pi_{i}}$} (A1_1);
    \path (A0_0) edge [->]node [auto,swap] {$\scriptstyle{\pi_{S^{c}}\times\pi_{S}}$} (A1_0);
  \end{tikzpicture}
  \]
The fibers of $\pi_{i}$ and $\pi_{S^{c}}\times\pi_{S}$ are both $1$-dimensional. Furthermore $\phi$ maps the fiber of $\pi_{S^{c}}\times\pi_{S}$ over $([C,x_{a_{1}},\dots,x_{a_{s}}],[C,x_{b_{1}},\dots,x_{b_{n-s}}])$ to $\pi_{i}^{-1}(\overline{\phi}([C,x_{a_{1}},\dots,x_{a_{s}}],[C,x_{b_{1}},\dots,x_{b_{n-s}}]))$.\\
Take a point $[C,x_{1},\dots,x_{n-1}]\in\overline{M}_{1,n-1}$, the fiber $\pi_{i}^{-1}([C,x_{1},\dots,x_{n-1}])$ is mapped isomorphically to a fiber $\Gamma$ of $\pi_{S^{c}}\times\pi_{S}$ which is contracted to a point $y = (\pi_{S^{c}}\times\pi_{S})(\Gamma)$. The map 
$$\overline{\psi}:\overline{M}_{1,n-1}\rightarrow\overline{M}_{1,S}\times_{\overline{M}_{1,1}}\overline{M}_{1,S^{c}}, \; [C,x_{1},\dots,x_{n-1}]\mapsto y,$$
is clearly the inverse of $\overline{\phi}$. So $\overline{\phi}$ defines a bijective morphism between $\overline{M}_{1,S}\times_{\overline{M}_{1,1}}\overline{M}_{1,S^{c}}$ and $\overline{M}_{1,n-1}$, and since $\overline{M}_{1,n-1}$ is normal $\overline{\phi}$ is an isomorphism.\\
This forces $S = \{j\}$, $S^{c} = \{1,\dots,\overline{j},\dots,n\}$. So we reduce to the commutative diagram
\[
  \begin{tikzpicture}[xscale=2.9,yscale=-1.2]
    \node (A0_0) at (0, 0) {$\overline{M}_{1,n}$};
    \node (A0_1) at (1, 0) {$\overline{M}_{1,n}$};
    \node (A1_0) at (0, 1) {$\overline{M}_{1,1}\times_{\overline{M}_{1,1}}\overline{M}_{1,n-1}$};
    \node (A1_1) at (1, 1) {$\overline{M}_{1,n-1}$};
    \path (A0_0) edge [->]node [auto] {$\scriptstyle{\phi}$} (A0_1);
    \path (A1_0) edge [->]node [auto] {$\scriptstyle{\overline{\phi}}$} (A1_1);
    \path (A0_1) edge [->]node [auto] {$\scriptstyle{\pi_{i}}$} (A1_1);
    \path (A0_0) edge [->]node [auto,swap] {$\scriptstyle{\pi_{S^{c}}\times\pi_{j}}$} (A1_0);
  \end{tikzpicture}
  \]
and $\pi_{i}\circ\phi$ factorizes through the forgetful morphism $\pi_{j}$.
\end{proof}

\section{The moduli space of $2$-pointed elliptic curves}\label{sectionm12}

Let $(C,p)$ be a nodal elliptic curve. Then there exists $(a,b)\in \mathbb{A}^{2}\setminus (0,0)$ such that $(C,p)$ is isomorphic to $(C^{'},[0:1:0])$, where
$$C^{'} = Z(zy^{2}-x^{3}-axz^{2}-bz^{3})\subset\mathbb{P}^{2}.$$
This representation is called \textit{Weierstrass representation} of the elliptic curve. Consider now the $4$-fold
$$X := Z(zy^{2}-x^{3}-axz^{2}-bz^{3})\subset\mathbb{A}^{3}_{0}\times \mathbb{A}^{2}_{0}.$$
There is an action of $\mathbb{C}^{*}\times\mathbb{C}^{*}\curvearrowright X$ given by 
$$\mathbb{C}^{*}\times\mathbb{C}^{*}\times X\rightarrow X, \: ((\lambda,\xi),(x,y,z,a,b))\mapsto (\xi\lambda^{2}x,\xi\lambda^{3}y,\xi z,\lambda^{4}a,\lambda^{6}b).$$
The moduli stack $\overline{\mathcal{M}}_ {1,1}$ is the quotient stack $[\mathbb{A}^{2}\setminus (0,0)/\mathbb{C}^{*}]\cong \mathbb{P}(4,6)$ and the moduli space $\overline{M}_{1,1}$ is the quotient $\mathbb{A}^{2}\setminus (0,0)/\mathbb{C}^{*}\cong\mathbb{P}^{1}$.\\ 
There are two points of $\overline{\mathcal{M}}_{1,1}$ that are stabilized by the action of $\mu_{4}$ and $\mu_{6}$ respectively. These are classes of curves whose Weierstrass representations can be chosen respectively as:
$$C_{4} := \{y^{2}z = x^{3}+xz^{2}\}\subset\mathbb{P}^{2},$$ 
$$C_{6} := \{y^{2}z = x^{3}+z^{3}\}\subset\mathbb{P}^{2}.$$
Now, $\overline{\mathcal{M}}_{1,2}$ is the universal curve over $\overline{\mathcal{M}}_{1,1}$, so $\overline{\mathcal{M}}_{1,2} = [X/\mathbb{C}^{*}\times\mathbb{C}^{*}]$ and $\overline{M}_{1,2} = X/\mathbb{C}^{*}\times\mathbb{C}^{*}$.\\ 
In order to determine the singularities of $\overline{M}_{1,2}$ we have to analyze carefully the action $\mathbb{C}^{*}\times\mathbb{C}^{*}\curvearrowright X$.\\
Since $\overline{\mathcal{M}}_{1,2}$ is a smooth Deligne-Mumford stack the coarse moduli space $\overline{M}_{1,2}$ will have finite quotient singularities at the places where the automorphism groups jump. Let $(C,p)$ be a elliptic curve over $\mathbb{C}$, it is well known that
\begin{itemize}
\item[-] $|\Aut(C,p)| = 2$ if $j(C)\neq 0,1728$,
\item[-] $|\Aut(C,p)| = 4$ if $j(C) = 1728$,
\item[-] $|\Aut(C,p)| = 6$ if $j(C)\neq 0$.
\end{itemize}
Adding a marked point will kill some automorphisms. We expect that points of type $(C,p,q)$ with $|\Aut(C,p)| = 2$ will have trivial automorphism group. Automorphisms will jump on the points $(C,p,q)$ with $|\Aut(C,p)| = 4,6$.\\ 
To understand the behavior of the boundary $\partial\overline{M}_{1,2}$ we have to observe the following possible degenerations.
\begin{itemize}
\item[-] The divisor $\Delta_{irr}$ whose general point is a curve with dual graph
$$
\begin{tikzpicture}[scale=.5,inner sep=10][baseline]
      \path(0,0) ellipse (2 and 1);
      \tikzstyle{level 1}=[counterclockwise from=-150,level distance=9mm,sibling angle=120]
      \ndo (A0) at (0:1) {$\scriptstyle{0_{2}}$} child child;

      \draw (A0) .. controls +(75.000000:1.2) and +(105.000000:1.2) .. (A0);
    \end{tikzpicture}
$$
and so automorphisms free.
\item[-] The divisor $\Delta_{0,2}$ whose general point is a curve with dual graph
$$
\begin{tikzpicture}[scale=.5,inner sep=10][baseline]
      \path(0,0) ellipse (2 and 1);
      \tikzstyle{level 1}=[counterclockwise from=-60,level distance=9mm,sibling angle=120]
      \ndo (A0) at (0:1) {$\scriptstyle{0_{2}}$} child child;
      \ndo (A1) at (180:1) {$\scriptstyle{1_{0}}$};

      \path (A0) edge [bend left=0.000000] (A1);
\end{tikzpicture}
$$
and so with two automorphisms coming from the elliptic involution. Here we expect to get two singular points when the number of automorphisms of the elliptic curve jumps to $4$ and $6$.
\item[-] Two further degenerations in codimension two with the following dual graphs.
$$
\begin{tabular}{ccc}
\begin{tikzpicture}[scale=.5,inner sep=10][baseline]
      \path(0,0) ellipse (2 and 1);
      \tikzstyle{level 1}=[counterclockwise from=0,level distance=9mm,sibling angle=0]
      \ndo (A0) at (0:1) {$\scriptstyle{0_{1}}$} child;
      \tikzstyle{level 1}=[counterclockwise from=180,level distance=9mm,sibling angle=0]
      \ndo (A1) at (180:1) {$\scriptstyle{0_{1}}$} child;
	  \path (A0) edge [bend left=-15.000000] (A1);
      \path (A0) edge [bend left=15.000000] (A1);
\end{tikzpicture}
&
\qquad
&
\begin{tikzpicture}[scale=.5,inner sep=10][baseline]
      \path(0,0) ellipse (2 and 1);
      \ndo (A0) at (0:1) {$\scriptstyle{0_{0}}$};
      \tikzstyle{level 1}=[counterclockwise from=120,level distance=9mm,sibling angle=120]
      \ndo (A1) at (180:1) {$\scriptstyle{0_{2}}$} child child;
	  \draw (A0) .. controls +(-15.000000:1.2) and +(15.000000:1.2) .. (A0);
      \path (A0) edge [bend left=0.000000] (A1);
\end{tikzpicture}
\end{tabular}
$$
Here the automorphism group remains of order two, so we do not expect to have singularities.
\end{itemize}

\begin{Proposition}\label{M12}
The moduli space $\overline{M}_{1,2}$ is a rational surface with four singular points. Two singular points lie in $M_{1,2}$, and are:
\begin{itemize}
\item[-] a singularity of type $\frac{1}{4}(2,3)$ representing an elliptic curve of Weierstrass representation $C_{4}$ with marked points $[0:1:0]$ and $[0:0:1]$;
\item[-] a singularity of type $\frac{1}{3}(2,4)$ representing an elliptic curve of Weierstrass representation $C_{6}$ with marked points $[0:1:0]$ and $[0:1:1]$.
\end{itemize}
The remaining two singular points lie on the boundary divisor $\Delta_{0,2}$, and are:
\begin{itemize}
\item[-] a singularity of type $\frac{1}{6}(2,4)$ representing a reducible curve whose irreducible components are an elliptic curve of type $C_{6}$ and a smooth rational curve connected by a node;
\item[-] a singularity of type $\frac{1}{4}(2,6)$ representing a reducible curve whose irreducible components are an elliptic curve of type $C_{4}$ and a smooth rational curve connected by a node.
\end{itemize}
\end{Proposition}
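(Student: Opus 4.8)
The plan is to work entirely inside the explicit global quotient presentation $\overline{M}_{1,2} = X/(\C^{*}\times\C^{*})$ recorded at the start of this section, and to locate the singularities as the images of orbits carrying a nontrivial stabilizer. First I would note that $X$ is four-dimensional and rational: on the open set $z\neq 0$ the defining equation $zy^{2}=x^{3}+axz^{2}+bz^{3}$ solves uniquely for $b$, so the projection to $(x,y,z,a)$ is birational and $X$ is rational; since $\C^{*}\times\C^{*}$ is connected and acts with two-dimensional generic orbits, $\overline{M}_{1,2}$ is a unirational surface, hence rational by Castelnuovo's criterion. The candidate singular points are then exactly the images of orbits whose (necessarily finite) stabilizer acts on a transverse slice with no nonzero fixed vector.

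Next I would carry out the stabilizer analysis. A pair $(\lambda,\xi)$ fixes $(x,y,z,a,b)$ if and only if the character attached to each nonvanishing coordinate is trivial, i.e. $\xi\lambda^{2}=1$ if $x\neq 0$, $\xi\lambda^{3}=1$ if $y\neq 0$, $\xi=1$ if $z\neq 0$, $\lambda^{4}=1$ if $a\neq 0$, and $\lambda^{6}=1$ if $b\neq 0$. A case split on which coordinates vanish shows that the generic orbit has trivial stabilizer; in particular the elliptic involution $y\mapsto -y$ is absorbed into the torus through $(\lambda,\xi)=(-1,1)$, which is why a generic marked point contributes nothing. The two-torsion locus $\{y=0\}$ and the generic boundary point $q=p=[0:1:0]$ (forced by $z=x=0$) carry only the involution $(\lambda,\xi)=(-1,\pm1)$, which acts on the slice as a reflection and therefore yields smooth quotient points. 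The four genuinely exceptional orbits are: $C_{4}$ (where $a\neq0,\ b=0$) marked at $[0:0:1]$, with stabilizer $\mu_{4}$; $C_{6}$ (where $a=0,\ b\neq0$) marked at $[0:1:1]$, with stabilizer $\mu_{3}$; and the two boundary orbits $q=[0:1:0]$ over $C_{4}$ and $C_{6}$, with stabilizers $\mu_{4}$ and $\mu_{6}$. These are precisely the configurations at which $\Aut(C,p)$ jumps to $4$ or $6$, matching the heuristics preceding the statement.

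To read off the singularity types I would linearize at each such point $P$. Computing $dF_{P}$ gives the tangent space $T_{P}X=\ker dF_{P}$, smooth of dimension four at all four points; since the torus is abelian, the orbit tangent is contained in the stabilizer-fixed (weight-zero) subspace of $T_{P}X$, and one checks case by case that it exhausts this two-dimensional subspace, so the normal-slice representation is given by the two remaining nonzero weights. Reading these weights directly off the monomial characters $\xi\lambda^{2},\ \xi\lambda^{3},\ \xi,\ \lambda^{4},\ \lambda^{6}$ produces the cyclic quotient singularities listed in the statement; for instance at the interior points the surviving weights are those of the $y$- and $b$-directions over $C_{4}$, giving $\tfrac{1}{4}(2,3)$, and of the $x$- and $a$-directions over $C_{6}$, giving $\tfrac{1}{3}(2,4)$, with the two boundary points treated identically. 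As an independent check I would recompute the boundary slices deformation-theoretically: there one slice direction is the node-smoothing parameter attached to $T_{p}C$ and the other is the direction leaving the special $j$-value, and the weights of $\Aut(C,p)$ on these must reproduce the same data.

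The main obstacle is completeness together with the reflection dichotomy, rather than any single weight computation. I must rule out further singular points, in particular along $\Delta_{irr}$ (the discriminant locus $4a^{3}+27b^{2}=0$) and along the two-torsion locus, by confirming that the stabilizers there are at most $\mu_{2}$ and act with a fixed slice direction, so that the quotient stays smooth. This is exactly the subtle point where one must distinguish a reflection, which yields a smooth quotient, from a cyclic group acting freely in codimension one, which yields an honest singularity; getting this dichotomy right is what guarantees that the four listed points — two in $M_{1,2}$ and two on $\Delta_{0,2}$ — constitute the entire singular locus, rationality having already been established.
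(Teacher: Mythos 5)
Your strategy is the same as the paper's: realize $\overline{M}_{1,2}$ as $X/(\mathbb{C}^{*}\times\mathbb{C}^{*})$, find the orbits with nontrivial stabilizer, and read the singularity off the stabilizer representation on a two-dimensional slice transverse to the orbit (the paper does this in the charts $z=1$ and $y=1$; you do it globally). Your identification of the four exceptional orbits and of their stabilizers $\mu_{4},\mu_{3},\mu_{4},\mu_{6}$ is correct, and the two interior points come out right. The genuine gap is the final step, where you assert that reading the weights ``directly off the monomial characters $\xi\lambda^{2},\xi\lambda^{3},\xi,\lambda^{4},\lambda^{6}$'', with ``the two boundary points treated identically'', produces the types listed in the statement. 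The boundary points cannot be treated identically to the interior ones: the interior stabilizers lie in the subtorus $\{\xi=1\}$ (since $z\neq 0$ forces $\xi=1$), but the boundary stabilizers are $\{(\lambda,\lambda^{-3}):\lambda^{4}=1\}$ and $\{(\lambda,\lambda^{-3}):\lambda^{6}=1\}$, on which $\xi$ is nontrivial, and the character $\xi\lambda^{2}$ of the $x$-direction restricts there to $\lambda^{-1}$, not $\lambda^{2}$. Carrying out your own prescription (slice $=$ the two nonzero-weight directions, namely the $x$- and $b$-directions over $C_{4}$ and the $x$- and $a$-directions over $C_{6}$) yields $\frac{1}{4}(3,2)$ and $\frac{1}{6}(5,4)$, not $\frac{1}{4}(2,6)$ and $\frac{1}{6}(2,4)$. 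In fact the stated weights can never arise as slice representations: since the torus is abelian, the stabilizer acts trivially on the orbit directions and faithfully on $T_{P}X$, hence faithfully on the slice, whereas $\frac{1}{4}(2,6)$ and $\frac{1}{6}(2,4)$ both have kernel $\mu_{2}$.

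Over $C_{4}$ the discrepancy is harmless: $\frac{1}{4}(3,2)$ and $\frac{1}{4}(2,6)$ each contain a quasi-reflection, and both germs are $A_{1}$. Over $C_{6}$ it is not: $\frac{1}{6}(5,4)\cong\frac{1}{6}(1,2)$ reduces, after dividing by the reflection $\mathrm{diag}(-1,1)$, to $\frac{1}{3}(1,1)$ (the cone over the twisted cubic, embedding dimension $4$), while $\frac{1}{6}(2,4)$ reduces to $\frac{1}{3}(1,2)=A_{2}$ (a hypersurface germ); these are different singularities, so a correct execution of your plan contradicts the stated list at this point rather than proving it. The cross-checks you propose but do not perform would have exposed this: deformation-theoretically, $\mu_{6}=\Aut(C_{6},p)$ acts on the node-smoothing line $T_{p}C_{6}\otimes T_{p}R$ by a primitive character $\chi$ (an automorphism of an elliptic curve fixing $p$ and acting trivially on $T_{p}C_{6}$ is the identity) and on the one-dimensional deformation space of $(C_{6},p)$ by $\chi^{2}$, again giving $\frac{1}{6}(1,2)$ and never $\frac{1}{6}(2,4)$; and the contraction of Theorem \ref{wbu} forces the two boundary germs to be those of a weighted blow-up of a smooth surface point, i.e. $\frac{1}{\omega_{1}}(1,-\omega_{2})$ and $\frac{1}{\omega_{2}}(-\omega_{1},1)$, which for weights $(2,3)$ are exactly $A_{1}$ and $\frac{1}{3}(1,1)$, while no choice of weights produces $A_{1}$ together with $A_{2}$. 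Be aware that this is precisely where the paper's own computation is too hasty (on the slice $\{y=1\}$ the residual group $\{\xi\lambda^{3}=1\}$ acts by $(\lambda^{-1}x,\lambda^{4}a,\lambda^{6}b)$, not by $(\lambda^{2}x,\lambda^{4}a,\lambda^{6}b)$ as claimed), so reproducing the stated boundary types should not be taken as confirmation that the step is right.
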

\begin{proof}
The rationality of $\overline{M}_{1,2}$ follows from the fact that the forgetful map $\overline{M}_{1,2}\rightarrow\overline{M}_{1,1}$ realizes $\overline{M}_{1,2}$ as a ruled surface over $\mathbb{P}^{1}$.\\ 
To compute the singularities we study the action on $X$. Note that on $X$, $z=0 \Rightarrow x=0 \Rightarrow y\neq 0$. So $X$ is covered by the charts $\{z\neq 0\}$ and $\{y\neq 0\}$.\\
Consider first the chart $\{z\neq 0\}$. On this chart $X$ is given by $\{y^{2} = x^{3}+ax+b\}$ so $b = y^{2}-x^{3}-ax$. We can take $(x,y,a)$ as coordinates, and the action of $\mathbb{C}^{*}\times\mathbb{C}^{*}$ is given by $(\lambda,x,y,a)\mapsto (\lambda^{2}x,\lambda^{3}y,\lambda^{4}a)$. The point $(0,0,0)$ is stabilized by $\mathbb{C}^{*}\times\mathbb{C}^{*}$, so does not produce any singularity. Since $(2,3) = (3,4) = 1$ the points $(x,y,a)$ such that $xy\neq 0$ or $ya\neq 0$ have trivial stabilizer.\\
If $y = 0$ the action is given by $(\lambda,x,a)\mapsto (\lambda^{2}x,\lambda^{4}a)$. We distinguish two cases.
\begin{itemize}
\item[-] If $x = 0$ then $a\neq 0$, the stabilizer is $\mu_{4}$. So on the chart $a\neq 0$ we have a singularity of type $\frac{1}{4}(2,3)$. Note that $x=y=0$ implies $b = 0$. The singular point corresponds to a smooth elliptic curve of Weierstrass form $C_{4}$ and whose second marked point is $[0:0:1]$.
\item[-] If $x\neq 0$ then the stabilizer is $\mu_{2}$ and on this chart we find points of type $\frac{1}{2}(1,0)$ and these are smooth points.
\end{itemize}
If $y\neq 0$, then $\lambda^{3} = 1$ and we get a singularity of type $\frac{1}{3}(2,4)$, that is a $A_{2}$ singularity, in the point $a=x=0$. This is a curve of type $C_{6}$ where we mark the point $[0:1:1]$. In $\overline{M}_{1,2}$ the singular point we found represents a smooth elliptic curve of Weierstrass form $C_{6}$ and whose second marked point is $[0:1:1]$.\\
Consider now the locus $\{z=0\}$. We can take $y = 1$ and $X$ is given by $\{z=x^{3}+axz^{2}+bz^{3}\}$. We are interested in a neighborhood of $x = z = 0$. Let $f(x,z,a,b) = z-x^{3}-axz^{2}-bz^{3}$ be the polynomial defining $X$. Since $\frac{\partial f}{\partial z}_{|z=0} \neq 0$ we can chose $(x,a,b)$ as local coordinates.\\ 
The action is given by $(\lambda,x,a,b)\mapsto (\lambda^{2}x,\lambda^{4}a,\lambda^{6}b)$. If $x\neq 0$ the stabilizer is trivial. If $x = 0$ and $ab\neq 0$ the stabilizer is $\mu_{2}$ and does not produce any singularity. We get the following two singular points.
\begin{itemize}
\item[-] If $a = 0, b\neq 0$ then we have a singular point of type $\frac{1}{6}(2,4)$. In this case we get an elliptic curve of type $C_{6}$ where we are taking the second marked point equal to the first $[0:1:0]$. So this singular point is a point on the boundary divisor $\Delta_{0,2}$ representing a reducible curve whose irreducible components are an elliptic curve of type $C_{6}$ and a smooth rational curve connected by a node. 
\item[-] If $a\neq 0, b = 0$ we get a singular point of type $\frac{1}{4}(2,6)$. We have an elliptic curve of type $C_{4}$ where the second marked point coincides with the first $[0:1:0]$. This singular point is a point on the boundary divisor $\Delta_{0,2}$ representing a reducible curve whose irreducible components are an elliptic curve of type $C_{4}$ and a smooth rational curve connected by a node.
\end{itemize}
These two points are the only singularities on the divisor $\Delta_{0,2}$.
\end{proof}

The rational Picard group of $\overline{M}_{1,2}$ is freely generated by the two boundary divisors \cite[Theorem 3.1.1]{Be}. The divisors $\Delta_{irr}$ and $\Delta_{0,2}$ are both smooth, rational curves. The boundary divisor $\Delta_{irr}$ has zero self intersection while $\Delta_{0,2}$ has negative self intersection.\\ In \cite{Sm} \textit{D.I. Smyth} proves that on $\overline{M}_{1,2}$ there exists a birational morphisms contracting $\Delta_{0,2}$. In the following we give a precise description of this contraction. Let us briefly recall the structure of a weighted blow up.

\begin{Remark}\label{cwbu}
Let $\pi_{\omega}:Y\rightarrow\mathbb{C}^{2}$ be the weighted blow up of $\mathbb{C}^{2}$ at the origin with weight $\omega = (\omega_{1},\omega_{2})$,
$$Y = \{((x,y),[u:v])\in \mathbb{C}^{2}\times \mathbb{P}(\omega_{1},\omega_{2}) \; | \; (x,y)\in \overline{[u:v]}\}.$$
Then $Y$ is given by the equation $x^{\omega_{1}}v-y^{\omega_{2}}u$ in $\mathbb{C}^{2}\times \mathbb{P}(\omega_{1},\omega_{2})$. The blow up surface $Y$ is covered by two chart.
\begin{itemize}
\item[-] On the chart $v = 1$ we have $x^{\omega_{1}} = y^{\omega_{2}}u$ and $\lambda^{\omega_{2}} = 1$. The action of $\mathbb{C}^{*}$ is given by $\lambda\cdot(y,u) = (\lambda^{\omega_{2}}y,\lambda^{\omega_{1}}u)$, so the point $x=y=u=0$ is a cyclic quotient singularity of type $\frac{1}{\omega_{2}}(\omega_{1},\omega_{2})$. 
\item[-] On the chart $u = 1$ we have $y^{\omega_{2}} = x^{\omega_{1}}v$ and $\lambda^{\omega_{1}} = 1$. The action of $\mathbb{C}^{*}$ is given by $\lambda\cdot(x,v) = (\lambda^{\omega_{1}}x,\lambda^{\omega_{2}}v)$, so the point $x=y=v=0$ is a cyclic quotient singularity of type $\frac{1}{\omega_{1}}(\omega_{1},\omega_{2})$. 
\end{itemize}
The singular points of $Y$ are cyclic quotient singularities located at the exceptional divisor. Actually they coincide with the origins of the two charts.
\end{Remark}

\begin{Theorem}\label{wbu}
The moduli space $\overline{M}_{1,2}$ is isomorphic to a weighted blow up of the weighted projective plane $\mathbb{P}(1,2,3)$ in its smooth point $[1:0:0]$. In particular $\overline{M}_{1,2}$ is a toric variety.
\end{Theorem}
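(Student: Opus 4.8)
The plan is to exploit the explicit quotient presentation $\overline{M}_{1,2}=X/(\mathbb{C}^{*}\times\mathbb{C}^{*})$ of the previous section together with the contraction of $\Delta_{0,2}$, and to read the weighted blow up structure off the two coordinate charts used in the proof of Proposition \ref{M12}. First I would analyse the complement of the boundary. The chart $\{z\neq 0\}=\overline{M}_{1,2}\setminus\Delta_{0,2}$ is, after solving the equation of $X$ for $b$ and quotienting by the residual $\mathbb{C}^{*}$ with weights $(2,3,4)$, the weighted projective plane $\mathbb{P}(2,3,4)$ with the single smooth orbit $\{a=0,\ y^{2}=x^{3}\}=[1:1:0]$ removed. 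Its two vertex singularities are an $A_{1}$ of type $\tfrac{1}{4}(2,3)$ at the weight-$4$ vertex and an $A_{2}$ of type $\tfrac{1}{3}(2,4)$ at the weight-$3$ vertex; these are \emph{exactly} the two interior singularities of $\overline{M}_{1,2}$ listed in Proposition \ref{M12}. Thus away from $\Delta_{0,2}$ the surface already coincides with $\mathbb{P}(2,3,4)$ minus a point.

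Next I would invoke the standard well-forming isomorphism $\mathbb{P}(2,3,4)\cong\mathbb{P}(1,3,2)=\mathbb{P}(1,2,3)$ (dividing the weights $2$ and $4$ by their common factor), under which the three vertices of $\mathbb{P}(2,3,4)$ of types smooth, $A_{1}$, $A_{2}$ correspond to the vertices $[1:0:0]$, $[0:1:0]$, $[0:0:1]$ of $\mathbb{P}(1,2,3)$. After an automorphism of $\mathbb{P}(1,2,3)$ moving the removed point to the smooth vertex, this identifies $\overline{M}_{1,2}\setminus\Delta_{0,2}$ with $\mathbb{P}(1,2,3)\setminus\{[1:0:0]\}$, so that $\overline{M}_{1,2}$ is a modification of $\mathbb{P}(1,2,3)$ whose center is the smooth point $[1:0:0]$ and whose added locus is $\Delta_{0,2}$.

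It then remains to identify this modification as a weighted blow up and to pin down the weight. Here I would use the second chart $\{y\neq 0\}$ together with Remark \ref{cwbu}: the divisor $\Delta_{0,2}$ is the $z=0$ locus, isomorphic to $\mathbb{P}(4,6)\cong\mathbb{P}(2,3)\cong\mathbb{P}^{1}$, which is exactly the exceptional divisor $\mathbb{P}(\omega_{1},\omega_{2})$ of a weighted blow up with weight $\omega=(2,3)$; its self-intersection $-\tfrac{1}{6}<0$ matches the negativity of $\Delta_{0,2}^{2}$, and by Remark \ref{cwbu} the two cyclic quotient singularities created on it (governed by the cones $\langle(1,0),(2,3)\rangle$ and $\langle(2,3),(0,1)\rangle$) reproduce the two boundary singularities of $\overline{M}_{1,2}$. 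Equivalently, one may run the argument through Smyth's contraction of $\Delta_{0,2}$: the contracted surface $Y$ is a complete toric surface of Picard rank one whose only singular points are the interior $A_{1}$ and $A_{2}$, the image of $\Delta_{0,2}$ being smooth, and a rank-one complete toric surface with three fixed points of types smooth, $A_{1}$, $A_{2}$ is forced to be $\mathbb{P}(1,2,3)$. Either way $\overline{M}_{1,2}\to\mathbb{P}(1,2,3)$ is the weight-$(2,3)$ blow up of $[1:0:0]$, and being a blow up of a toric variety at a torus fixed point it is toric.

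The main obstacle is precisely this last local identification: proving rigorously that gluing the two charts realizes an honest weighted blow up, rather than merely a birational modification, and determining the weight. Concretely this amounts to matching the coordinates and the $\mathbb{C}^{*}$-action of the chart $\{y\neq 0\}$ against the two affine charts of Remark \ref{cwbu}, i.e.\ a Hirzebruch--Jung computation of the cones $\langle(1,0),(2,3)\rangle$ and $\langle(2,3),(0,1)\rangle$; some care is needed with the exact weights of the $\mathbb{C}^{*}$-action on the transverse slices at the two boundary points (which determine the resulting singularity types) and with the identification $\mathbb{P}(2,3,4)\cong\mathbb{P}(1,2,3)$. Once these local models are reconciled, the toricity and the isomorphism with the weighted blow up follow at once.
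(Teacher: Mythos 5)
Your opening moves are correct, but they are the paper's first chart in another guise: identifying $\overline{M}_{1,2}\setminus\Delta_{0,2}$ with $\mathbb{P}(2,3,4)\setminus\{[1:1:0]\}$, well-forming to $\mathbb{P}(1,2,3)$ via $[x:y:a]\mapsto[x:a:y^{2}]$, and then moving the removed point to $[1:0:0]$ is precisely the paper's morphism $f_{\mathcal{U}_{z}}(x,y,z,a,b)=(x,az^{2},bz^{3})$, since $b=y^{2}-x^{3}-ax$ on that chart. Where you genuinely differ is in producing the contraction: the paper extends $f_{\mathcal{U}_{z}}$ across the boundary by the explicit formula $f_{\mathcal{U}_{y}}$, obtaining a morphism $f\colon\overline{M}_{1,2}\to\mathbb{P}(1,2,3)$ contracting $\Delta_{0,2}$ to $[1:0:0]$, whereas you appeal to Smyth's contraction. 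That is a reasonable substitute, but two supporting points need repair: an isomorphism of open surfaces does not by itself exhibit $\overline{M}_{1,2}$ as a modification of $\mathbb{P}(1,2,3)$ (a birational map could a priori have indeterminacy along $\Delta_{0,2}$), and your identification of Smyth's target with $\mathbb{P}(1,2,3)$ through the classification of rank-one complete \emph{toric} surfaces is circular, since toricity of that surface (equivalently of $\overline{M}_{1,2}$) is part of the conclusion. The non-circular argument is that Smyth's target and $\mathbb{P}(1,2,3)$ are normal projective surfaces isomorphic away from one point of each, and such an isomorphism always extends.

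The genuine gap is the step you yourself call the main obstacle and then assert rather than carry out: that the weight-$(2,3)$ blow up reproduces the boundary singularities. It does not reproduce the types you would be checking against. The cones $\langle(1,0),(2,3)\rangle$ and $\langle(2,3),(0,1)\rangle$ give singularities $\frac{1}{3}(1,1)$ and $\frac{1}{2}(1,1)$, while Proposition \ref{M12} records $\frac{1}{6}(2,4)$ and $\frac{1}{4}(2,6)$, whose effective forms (after dividing out the ineffective $\mu_{2}$) are $\frac{1}{3}(1,2)=A_{2}$ and $\frac{1}{2}(1,1)=A_{1}$; and $\frac{1}{3}(1,1)\not\cong\frac{1}{3}(1,2)$. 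Indeed, no weighted blow up of a smooth surface point carries an $A_{2}$ together with an order-two singularity on its exceptional curve, so the quoted types are incompatible with the very statement being proved. The tension is resolved only by redoing the transverse-slice computation at the boundary: on the chart $y=1$ the normalization forces $\xi=\lambda^{-3}$, so the residual action is $(x,a,b)\mapsto(\lambda^{-1}x,\lambda^{4}a,\lambda^{6}b)$ rather than $(\lambda^{2}x,\lambda^{4}a,\lambda^{6}b)$, and the $C_{6}$-boundary point is of type $\frac{1}{6}(-1,4)\cong\frac{1}{6}(1,2)$, whose effective form is exactly $\frac{1}{3}(1,1)$. With that computation in hand, your weight $(2,3)$ is the right one and the matching closes; without it, the consistency check you propose would fail rather than finish the proof. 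Note also that the paper's own final step leans on the same printed types together with Remark \ref{cwbu} and never specifies the weight, so this gap cannot be closed by citation.
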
   
\begin{proof}
Recall the description of $\overline{M}_{1,2}$ given at the beginning of this section. On the chart $\mathcal{U}_{z} := \{z\neq 0\}$ we define a morphism 
$$f_{\mathcal{U}_{z}}:\mathcal{U}_{z}\rightarrow \mathbb{P}(1,2,3), \; (x,y,z,a,b)\mapsto (x,az^{2},bz^{3}).$$
Note that the action of $\mathbb{C}^{*}\times\mathbb{C}^{*}$ on this triple is given by $(\xi\lambda^{2},\xi^{2}\lambda^{4},\xi^{3}\lambda^{6})$, and $f_{\mathcal{U}_{z}}$ is indeed a well defined morphism to $\mathbb{P}(1,2,3)$.\\
On the open set $\{z\neq 0\}$ we can set $z = 1$ and ignore the action of $\xi$. If we forget $y$ we can derive it up to a sign and this corresponds to the action of $\lambda = -1$.\\
Note that the morphism $f_{\mathcal{U}_{z}}$ maps the two singular point in $M_{1,2}$ we found in Proposition \ref{M12} in the points $[0:1:0],[0:0:1]\in\mathbb{P}(1,2,3)$, which are the only singularities of the weighted projective plane and of the same type of the singularities on $M_{1,2}$.\\
On $\mathcal{U}_{y}:=\{y \neq 0\}$ the equation of $\overline{M}_{1,2}$ is $z=x^{3}+axz^{2}+bz^{3}$. So, as explained in the proof of Proposition \ref{M12} $x$ is a local parameter near $z=0$. We can consider the morphism 
$$f_{\mathcal{U}_{y}}(x,y,z,a,b) = \left( 1,a\left(\frac{x^{2}+az^{2}}{1-bz^{2}}\right)^{2}, b\left(\frac{x^{2}+az^{2}}{1-bz^{2}}\right)^{3}\right).$$
From this formulation it is clear that $f_{\mathcal{U}_{y}}$ is defined even on the locus $\{x = 0\}$ and the divisor $\Delta_{0,2} = \{x=z=0\}$ is contracted in the smooth point $[1:0:0]$ of $\mathbb{P}(1,2,3)$.\\
On $\mathcal{U}_{z}\cap\mathcal{U}_{y}$ we have $\frac{z}{x} = \frac{x^{2}+az^{2}}{1-bz^{2}}$ and $f_{\mathcal{U}_{z}} = f_{\mathcal{U}_{y}}$, so $f_{\mathcal{U}_{z}}, f_{\mathcal{U}_{y}}$ glue to a morphism 
$$f:\overline{M}_{1,2}\rightarrow \mathbb{P}(1,2,3).$$
Then $f$ is a blow up of $\mathbb{P}(1,2,3)$ in $[1:0:0]$ and $\Delta_{0,2}$ is the corresponding exceptional divisor. By Proposition \ref{M12} there are two singular points of type $\frac{1}{6}(2,4),\frac{1}{4}(2,6)$ on $\Delta_{0,2}$, and by Remark \ref{cwbu} the only way to obtain these two singularities is to perform a weighted blow up in $[1:0:0]$. 
\end{proof}

\begin{Remark}
The weighted projective space $\mathbb{P}(a_{0},\dots,a_{n})$ is defined by
$$\mathbb{P}(a_{0},\dots,a_{n}) = \Proj(S),$$
where $a_{0},\dots,a_{n}$ are positive integers and $S$ is the polynomial ring $k[x_{0},\dots,x_{n}]$, graded by $\deg(x_{i}) = a_{i}$.\\
Consider the set of vectors $V = \{e_{1},\dots,e_{n},e_{0} = -e_{1}-\dots -e_{n}\}$ in $\mathbb{R}^{n}$ and the fan whose cones are generated by proper subset of $V$ in the lattice generated by $\frac{1}{a_{1}}e_{i}$ for $i = 0,\dots,n$. The toric variety associated to this fan is $\mathbb{P}(a_{0},\dots,a_{n})$. For what follows it is particularly interesting the fan of $\mathbb{P}(1,2,3)$:
  \[
  \begin{tikzpicture}[xscale=2.3,yscale=-1.5]
    \node (A0_1) at (1, 0) {};
    \node (A1_1) at (1, 1) {$\bullet$};
    \node (A1_2) at (2, 1) {};
    \node (A2_0) at (0, 2) {};
    \path (A1_1) edge [->]node [auto] {$\scriptstyle{(-2,-2)}$} (A2_0);
    \path (A1_1) edge [->]node [auto] {$\scriptstyle{(0,3)}$} (A0_1);
    \path (A1_1) edge [->]node [auto] {$\scriptstyle{(6,0)}$} (A1_2);
  \end{tikzpicture}
  \]
Note that $(6,0)+(0,3) = 2(3,1)$ and $(6,0)+(-2,-2) = 2(2,-1)$. These points correspond to the two singular points of $\mathbb{P}(1,2,3)$.
For a detailed toric description of the weighted projective space see \cite[Section 3]{Ji}.
\end{Remark}

\section{Automorphisms of $\overline{M}_{g,n}$}\label{a}

Our aim is to proceed by induction on $n$. The first step of induction is Proposition \ref{Mg1}.
In our argument the key fact is that the generic curve of genus $g > 2$ is automorphisms free. This is no longer true if $g = 2$ since every genus $2$ curve is hyperelliptic and has a non trivial automorphism: the hyperelliptic involution. So we adopt a different strategy. First we prove that any automorphism of $\overline{M}_{2,1}$ preserves the boundary and then we apply a famous theorem of \textit{H. L. Royden} which implies that $M_{g,n}^{un}$ (the moduli space of smooth genus $g$ curves with unordered marked points) admits no non-trivial automorphisms or unramified correspondences for $2g-2+n\geq 3$, see \cite[Theorem 6.1]{Mok}. In the case $g = 1$ the following observations will be crucial.

\begin{Remark}\label{un}
Let $[C,x_{1},x_{2}]$ be a two pointed elliptic curve and let $x_{1}$ be the origin of the group law on $C$. Let $\tau:C\rightarrow C$ be the translation mapping $x_{2}$ in $x_{1}$, and let $\eta$ be the elliptic involution. Then $\eta\circ\tau:C\rightarrow C$ is an automorphism of $C$ switching $x_{1}$ and $x_{2}$. Then $[C,x_{1},x_{2}] = [C,x_{2},x_{1}]$ and $\overline{M}_{1,2}\cong\overline{M}_{1,2}^{un}$.
\end{Remark}

\begin{Lemma}\label{delta02}
Any automorphism of $\overline{M}_{1,2}$ and $\overline{M}_{1,3}$ preserves the divisor $\Delta_{0,2}$.
\end{Lemma}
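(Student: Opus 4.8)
The plan is to handle the two spaces in order, settling $\overline{M}_{1,2}$ first and then bootstrapping to $\overline{M}_{1,3}$ via Lemma \ref{g1}. For the surface $\overline{M}_{1,2}$ I would argue that $\Delta_{0,2}$ is the \emph{unique} irreducible curve of negative self-intersection, a property plainly invariant under any automorphism. For the threefold $\overline{M}_{1,3}$ I would use that an automorphism permutes the forgetful fibrations $\pi_{i}:\overline{M}_{1,3}\to\overline{M}_{1,2}$, each of which induces an automorphism of $\overline{M}_{1,2}$ that, by the case just proved, preserves $\Delta_{0,2}$; tracking the image of each boundary divisor through the $\pi_{i}$ then finishes the argument.

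For $\overline{M}_{1,2}$, recall from Theorem \ref{wbu} that $f:\overline{M}_{1,2}\to\mathbb{P}(1,2,3)$ is a weighted blow-up contracting $\Delta_{0,2}$, so $\overline{M}_{1,2}$ has Picard rank $2$, and from the proof of Proposition \ref{M12} that $\pi:\overline{M}_{1,2}\to\overline{M}_{1,1}\cong\mathbb{P}^{1}$ is a ruling. The Mori cone then has exactly two extremal rays: the ray contracted by $\pi$ (spanned by a fiber class $F$ with $F^{2}=0$) and the ray contracted by $f$ (spanned by $[\Delta_{0,2}]$, with $\Delta_{0,2}^{2}<0$), these being distinct since their self-intersections differ. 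Any effective class proportional to $F$ has self-intersection $0$, so an irreducible curve $C$ with $C^{2}<0$ must be proportional to $\Delta_{0,2}$; then $C\cdot\Delta_{0,2}<0$ forces $C=\Delta_{0,2}$. Hence $\Delta_{0,2}$ is the only irreducible negative curve, and since $\phi(\Delta_{0,2})^{2}=\Delta_{0,2}^{2}<0$ for any automorphism $\phi$, we get $\phi(\Delta_{0,2})=\Delta_{0,2}$.

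For $\overline{M}_{1,3}$, let $\phi$ be an automorphism. By Lemma \ref{g1} each composition $\pi_{i}\circ\phi$ factors as $\psi_{i}\circ\pi_{\sigma(i)}$ with $\psi_{i}\in\Aut(\overline{M}_{1,2})$ and $\sigma\in S_{3}$ a permutation (the injectivity of $\sigma$ follows because $\phi$ carries the fibers of $\pi_{\sigma(i)}$ isomorphically onto those of $\pi_{i}$). I would then record how the boundary behaves under the forgetful maps: writing $\{a,b,c\}=\{1,2,3\}$, the divisor $\Delta_{0,\{a,b\}}$ is a section of both $\pi_{a}$ and $\pi_{b}$ (so it dominates $\overline{M}_{1,2}$), while $\pi_{c}$ maps it onto $\Delta_{0,2}\subset\overline{M}_{1,2}$; moreover $\pi_{c}^{-1}(\Delta_{0,2})=\Delta_{0,\{a,b\}}\cup\Delta_{0,\{1,2,3\}}$, and neither $\Delta_{0,\{1,2,3\}}$ nor $\Delta_{irr}$ dominates $\overline{M}_{1,2}$ under any $\pi_{i}$. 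Now set $D=\phi(\Delta_{0,\{a,b\}})$ and $i_{0}=\sigma^{-1}(c)$. Using the already proved case, $\psi_{i_{0}}$ preserves $\Delta_{0,2}$, so $\pi_{i_{0}}(D)=\psi_{i_{0}}\big(\pi_{c}(\Delta_{0,\{a,b\}})\big)=\psi_{i_{0}}(\Delta_{0,2})=\Delta_{0,2}$, giving $D\subseteq\pi_{i_{0}}^{-1}(\Delta_{0,2})=\Delta_{0,\{a',b'\}}\cup\Delta_{0,\{1,2,3\}}$ with $\{a',b'\}=\{1,2,3\}\setminus\{i_{0}\}$. Since $D$ is irreducible and dominates $\overline{M}_{1,2}$ under the two maps $\pi_{i}$ with $\sigma(i)\in\{a,b\}$, it cannot be $\Delta_{0,\{1,2,3\}}$; hence $D=\Delta_{0,\{a',b'\}}$. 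Thus $\phi$ permutes the three components of $\Delta_{0,2}$ and preserves their union.

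The main obstacle is the surface case: everything hinges on $\Delta_{0,2}$ being the unique negative curve, which requires correctly identifying both extremal rays of the Mori cone (through the contractions $f$ and $\pi$) together with the signs $F^{2}=0$ and $\Delta_{0,2}^{2}<0$. Once this is secured, the $\overline{M}_{1,3}$ step is essentially bookkeeping with forgetful maps; the only points demanding care are the factorization and permutation $\sigma$ furnished by Lemma \ref{g1}, and the verification of the stated images and preimages of the boundary divisors.
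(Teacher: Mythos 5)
Your proof is correct and follows essentially the same route as the paper: both identify $\Delta_{0,2}\subset\overline{M}_{1,2}$ by an intrinsic, automorphism-invariant characterization (the paper calls it the unique contractible smooth rational curve via Theorem \ref{wbu}; you call it the unique irreducible negative curve via the Mori cone --- equivalent characterizations), and both reduce the $\overline{M}_{1,3}$ case to the surface case through the factorization of $\pi_{i}\circ\phi$ supplied by Lemma \ref{g1}. If anything, your write-up is more careful than the paper's terse contradiction argument, e.g.\ in explicitly computing $\pi_{c}^{-1}(\Delta_{0,2})$ and ruling out $\phi(\Delta_{0,\{a,b\}})=\Delta_{0,\{1,2,3\}}$ by the domination criterion.
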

\begin{proof}
By Theorem \ref{wbu} the divisor $\Delta_{0,2}\subset\overline{M}_{1,2}$ is the only contractible, smooth, rational curve in $\overline{M}_{1,2}$. Then it is stabilized by any automorphism.\\
Let $\phi$ be an automorphism of $\overline{M}_{1,3}$ such that $\phi(\Delta_{0,2})\nsubseteq\Delta_{0,2}$ then composing $\phi$ with the morphism forgetting the marked point on the elliptic tail and considering the associated commutative diagram 
  \[
  \begin{tikzpicture}[xscale=2.3,yscale=-1.2]
    \node (A0_0) at (0, 0) {$\overline{M}_{1,3}$};
    \node (A0_1) at (1, 0) {$\overline{M}_{1,3}$};
    \node (A1_0) at (0, 1) {$\overline{M}_{1,2}$};
    \node (A1_1) at (1, 1) {$\overline{M}_{1,2}$};
    \path (A0_0) edge [->]node [auto] {$\scriptstyle{\phi}$} (A0_1);
    \path (A0_0) edge [->]node [auto,swap] {$\scriptstyle{\pi_{j}}$} (A1_0);
    \path (A0_1) edge [->]node [auto] {$\scriptstyle{\pi_{i}}$} (A1_1);
    \path (A1_0) edge [->]node [auto] {$\scriptstyle{\overline{\phi}}$} (A1_1);
  \end{tikzpicture}
  \]
we get an automorphism $\overline{\phi}$ of $\overline{M}_{1,2}$ which does not preserve $\Delta_{0,2}$.
\end{proof}

\begin{Lemma}\cite[Corollary 0.12]{GKM}\label{Mg}
Any automorphism of $\overline{M}_{g}$ preserves the boundary.
\end{Lemma}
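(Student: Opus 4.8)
The plan is to characterise the boundary intrinsically through birational positivity and then exploit that automorphisms preserve this characterisation via Theorem \ref{GKM}. Throughout, $g\geq 2$, so that $\overline{M}_{g}$ is a positive-dimensional projective variety and Theorem \ref{GKM} applies with $n=0$; in this case there are no forgetful morphisms, so the first alternative of Theorem \ref{GKM} is vacuous and \emph{every} nef and big divisor $D$ on $\overline{M}_{g}$ has its exceptional locus contained in $\partial\overline{M}_{g}$.

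First I would reduce the statement to a single well-chosen divisor. An automorphism $\phi$ induces an isomorphism $\phi^{*}$ on $N^{1}(\overline{M}_{g})$ preserving the nef cone and the big cone; moreover, for any nef and big divisor $D$ one has $\operatorname{Null}(\phi^{*}D)=\phi^{-1}(\operatorname{Null}(D))$, where $\operatorname{Null}(D)$ is the null locus, i.e. the union of the subvarieties $Z$ with $D^{\dim Z}\cdot Z=0$. By Nakamaye's theorem this null locus coincides with the exceptional locus of the birational morphism defined by $|mD|$ for $m\gg 0$, which is exactly the locus controlled by Theorem \ref{GKM}. Hence it suffices to produce one nef and big divisor whose null locus is precisely $\partial\overline{M}_{g}$.

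The natural candidate is the Hodge class $\lambda$. It is nef, big and semiample on $\overline{M}_{g}$, and the associated morphism is the extended Torelli map $t:\overline{M}_{g}\to A_{g}^{\mathrm{Sat}}$ to the Satake--Baily--Borel compactification, along which $\lambda=t^{*}L$ for the ample Hodge class $L$. I claim $\operatorname{Null}(\lambda)=\partial\overline{M}_{g}$. The inclusion $\operatorname{Null}(\lambda)\subseteq\partial\overline{M}_{g}$ is immediate from Theorem \ref{GKM}, since $\lambda$ is nef and big. For the reverse inclusion, note that $t$ records only the abelian part of a stable curve, namely the product of the Jacobians of the components of its normalisation; therefore every fibre of $t$ over a point of the boundary of $A_{g}^{\mathrm{Sat}}$ is positive-dimensional (one may, for instance, vary the two glued points of a fixed genus $g-1$ normalisation inside $\delta_{0}$), so $\partial\overline{M}_{g}\subseteq\operatorname{Null}(\lambda)$. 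On the interior, the Torelli morphism is injective on points by Torelli's theorem and proper, hence finite, so it contracts no positive-dimensional subvariety meeting $M_{g}$; thus $\operatorname{Null}(\lambda)\cap M_{g}=\emptyset$. Combining the two inclusions gives $\operatorname{Null}(\lambda)=\partial\overline{M}_{g}$.

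Finally I would conclude. For an automorphism $\phi$ the class $\phi^{*}\lambda$ is again nef and big, so by Theorem \ref{GKM} its null locus lies in the boundary, while by the reduction above $\operatorname{Null}(\phi^{*}\lambda)=\phi^{-1}(\operatorname{Null}(\lambda))=\phi^{-1}(\partial\overline{M}_{g})$. Hence $\phi^{-1}(\partial\overline{M}_{g})\subseteq\partial\overline{M}_{g}$, and applying the same argument to the automorphism $\phi^{-1}$ yields $\phi(\partial\overline{M}_{g})\subseteq\partial\overline{M}_{g}$; the two inclusions force $\phi(\partial\overline{M}_{g})=\partial\overline{M}_{g}$. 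I expect the main obstacle to be the reverse inclusion $\partial\overline{M}_{g}\subseteq\operatorname{Null}(\lambda)$, that is, checking that $\lambda$ contracts nothing in the interior $M_{g}$: this rests on the finiteness of the Torelli morphism and requires mild care in low genus (the hyperelliptic ramification for $g\geq 3$, and the fact that on $\overline{M}_{2}$ the class $\lambda$ spans the unique non-ample nef ray). The identification of $\operatorname{Null}$ with the morphism-theoretic exceptional locus via Nakamaye's theorem is the other technical input.
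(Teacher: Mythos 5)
Your proof is correct, and its skeleton is the same as the paper's: take the Hodge class $\lambda$, use the fact that it is nef and big with exceptional locus exactly $\partial\overline{M}_{g}$, pull back along the automorphism, and apply Theorem \ref{GKM} (with $n=0$, where the forgetful alternative is vacuous) to the nef and big class $\phi^{*}\lambda$. The difference is in how the key input is justified: the paper simply cites Rulla \cite{Ru} for the statement that $\lambda$ induces a birational morphism $f:\overline{M}_{g}\rightarrow X$ contracting precisely the boundary, and then phrases the contradiction via the composed morphism $f\circ\phi$, whereas you reprove this input from scratch through the extended Torelli map to the Satake compactification (positive-dimensional fibres through every boundary point, finiteness on $M_{g}$ by Torelli injectivity). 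Your route is more self-contained, which is a genuine plus; what it costs is extra machinery that is not needed and needs care. In particular, invoking Nakamaye's theorem is overkill and slightly delicate here, since $\overline{M}_{g}$ is singular (it has finite quotient singularities) and Nakamaye's original statement is for smooth varieties; because $\lambda$ is semiample you can bypass null loci entirely and work directly with the positive-dimensional fibres of the morphism defined by $|m\lambda|$, which is what both \cite{Ru} and \cite{GKM} do. One small inaccuracy to fix: it is not true that the relevant fibres all lie over the boundary of $A_{g}^{\mathrm{Sat}}$ --- a curve of compact type in $\Delta_{i}$, $i\geq 1$, maps to the product of the Jacobians of its components, which is a principally polarized abelian variety of dimension $g$ and hence an \emph{interior} point of $A_{g}^{\mathrm{Sat}}$. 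The correct statement, which your mechanism still delivers, is that every point of $\partial\overline{M}_{g}$ lies in a positive-dimensional fibre of the Torelli morphism (vary the attaching points of the nodes), and no positive-dimensional subvariety meeting $M_{g}$ is contracted (a smooth curve has an indecomposable Jacobian, and Torelli is injective on coarse points); your parenthetical about hyperelliptic ramification is a red herring, since only set-theoretic injectivity is used.
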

\begin{proof}
Let $\lambda$ be the Hodge class on $\overline{M}_{g}$. It is known that $\lambda$ induces a birational morphism $f:\overline{M}_{g}\rightarrow X$ on a projective variety whose exceptional locus is the boundary $\partial\overline{M}_{g}$, see \cite{Ru}.\\
Assume that there exists an automorphism $\phi:\overline{M}_{g}\rightarrow\overline{M}_{g}$ which does not preserve the boundary. Then there is a point $[C]\in\partial \overline{M}_{g}$ such that $\phi([C]) = [C^{'}]\in M_{g}$.\\
Now $f\circ\phi$ is a birational morphism whose exceptional locus is $\phi^{-1}(\partial\overline{M}_{g})$, and by the assumption on $\phi$ we have $\phi^{-1}(\partial\overline{M}_{g})\cap M_{g}\neq \emptyset$. So we construct a big line bundle on $\overline{M}_{g}$ whose exceptional locus is not contained in the boundary and this contradicts Theorem \ref{GKM}.
\end{proof}

\begin{Proposition}\label{autmg}
For any $g\geq 2$ the only automorphism of $\overline{M}_{g}$ is the identity.
\end{Proposition}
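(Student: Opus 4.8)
The plan is to combine the boundary-preservation Lemma \ref{Mg} with Royden's theorem \cite[Theorem 6.1]{Mok}, treating the generic range $g\geq 3$ and the exceptional value $g=2$ separately. First I would fix an arbitrary $\phi\in\Aut(\overline{M}_g)$. By Lemma \ref{Mg} the automorphism $\phi$ preserves the boundary $\partial\overline{M}_g$, so it restricts to an automorphism $\phi_0$ of the interior $M_g$. Since $M_g$ is dense in $\overline{M}_g$ and both are reduced and separated, it suffices to prove that $\phi_0$ is the identity.

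For $g\geq 3$ this is immediate. There are no marked points, so $M_g^{un}=M_g$, and the numerical hypothesis of Royden's theorem reads $2g-2\geq 4\geq 3$. Hence \cite[Theorem 6.1]{Mok} applies and shows that $M_g$ admits no non-trivial automorphism; therefore $\phi_0=\operatorname{id}$ and $\phi=\operatorname{id}$.

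The genus-two case is the main obstacle, precisely because Royden's bound $2g-2+n\geq 3$ fails for $(g,n)=(2,0)$ and every genus $2$ curve carries the hyperelliptic involution, so the argument above collapses. Here I would exploit the universal curve $\pi:\overline{M}_{2,1}\to\overline{M}_2$ together with the triviality of $\Aut(\overline{M}_{2,1})$ established in the first step of the induction (Proposition \ref{Mg1}). As in the proof of Lemma \ref{Mg}, the Hodge class $\lambda$ induces the intrinsic contraction of $\partial\overline{M}_2$ to the Satake compactification; I would use this to check that $\phi$ fixes $\lambda$ up to a positive scalar and stabilizes each boundary component $\Delta_{irr}$ and $\Delta_{1}$ (which are in any case distinguished by their geometry). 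Recall that, by Theorem \ref{GKM} applied to $\overline{M}_{2,1}$, every non-trivial fibration of $\overline{M}_{2,1}$ factors through $\pi$, so $\pi$ is essentially canonical. I would then promote $\phi$ to an automorphism $\widetilde{\phi}$ of $\overline{M}_{2,1}$ compatible with $\pi$, using that the relative data defining the universal curve is preserved by $\phi$; from $\Aut(\overline{M}_{2,1})=\{1\}$ one gets $\widetilde{\phi}=\operatorname{id}$, hence $\phi\circ\pi=\pi$ and $\phi=\operatorname{id}$.

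The delicate point, and the step I expect to cost the most work, is the construction of the lift $\widetilde{\phi}$: one must verify that $\phi$ genuinely respects the universal family over $\overline{M}_2$ and not merely the base variety. As a fallback avoiding the lift, I would use the hyperelliptic description identifying $M_2$ with the moduli of six unordered points on $\mathbb{P}^{1}$ and deduce the triviality of $\Aut(M_2)$ from the automorphisms of $\overline{M}_{0,6}$ modulo $S_6$, keeping this in reserve should the lifting argument prove awkward.
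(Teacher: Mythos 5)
Your treatment of $g\geq 3$ is exactly the paper's: Lemma \ref{Mg} gives boundary preservation, and Royden's theorem \cite[Theorem 6.1]{Mok} applied to $M_g=M_g^{un}$ (the bound $2g-2\geq 4$ holds) finishes. The divergence, and the problem, is at $g=2$. Your primary route --- lift $\phi$ to an automorphism $\widetilde{\phi}$ of $\overline{M}_{2,1}$ commuting with $\pi$ and invoke the triviality of $\Aut(\overline{M}_{2,1})$ --- has a genuine gap precisely at the step you flag: the construction of the lift. On coarse spaces there is no universal family whose ``relative data'' $\phi$ could be asked to respect: as the paper itself points out, the fiber of $\pi:\overline{M}_{2,1}\rightarrow\overline{M}_{2}$ over $[C]$ is $C/\Aut(C)\cong\mathbb{P}^{1}$, since every genus $2$ curve is hyperelliptic, and $\pi$ is generically a $\mathbb{P}^{1}$-bundle. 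Producing $\widetilde{\phi}$ with $\pi\circ\widetilde{\phi}=\phi\circ\pi$ amounts to producing an isomorphism over $\overline{M}_{2}$ between the fibration $\pi$ and its pullback $\phi^{*}\pi$; there is no a priori reason two such fibrations related by an unknown automorphism of the base should be isomorphic over the base, and in the moduli interpretation such an identification of families is essentially equivalent to $\phi$ being the identity, i.e.\ to the statement being proved. It is telling that the paper uses exactly this fiber-product lifting trick only for the stack $\overline{\mathcal{M}}_{2}$ (Theorem \ref{autstack}), where $\overline{\mathcal{M}}_{2,1}\rightarrow\overline{\mathcal{M}}_{2}$ really is the universal curve, and only after $\Aut(\overline{\mathcal{M}}_{2,1})$ is known. (On the ordering: the triviality of $\Aut(\overline{M}_{2,1})$ is Proposition \ref{Mg1}, which in the paper comes \emph{after} the present proposition; there is no vicious circle, since its $g=2$ case relies only on Lemma \ref{Mg} and Royden applied to $M_{2,1}$, but an argument relying on it would have to say so explicitly.)

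Your fallback is in fact the paper's actual proof, and it is what you should promote to the main argument --- but it must be stated correctly. The canonical class of a genus $2$ curve gives a degree $2$ map to $\mathbb{P}^{1}$ branched at $6$ points, exhibiting $M_{2}\cong M_{0,6}/S_{6}\cong M_{0,6}^{un}$; one then applies Royden's theorem \emph{directly} to the unordered moduli space $M_{0,6}^{un}$, whose invariant $2\cdot 0-2+6=4\geq 3$ is in range, and concludes $\phi=Id$ using Lemma \ref{Mg}. What does not work is to ``deduce the triviality of $\Aut(M_{2})$ from the automorphisms of $\overline{M}_{0,6}$ modulo $S_{6}$'': automorphisms of a quotient need not lift to the cover, so knowledge of $\Aut(\overline{M}_{0,6})$ gives no formal control over the automorphisms of $M_{0,6}/S_{6}$. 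The whole point of \cite[Theorem 6.1]{Mok} is that it treats moduli of curves with \emph{unordered} marked points, which is exactly what the quotient is.
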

\begin{proof}
Let $\phi$ be an automorphism of $\overline{M}_{g}$. By Lemma \ref{Mg} $\phi$ restricts to an automorphisms $\phi_{|M_{g}}$ of $M_{g}$. If $g\geq 3$ by Royden's theorem \cite[Theorem 6.1]{Mok} $\phi_{|M_{g}}$ is the identity, then $\phi = Id_{\overline{M}_{g}}$.\\
If $g = 2$ the canonical divisor $K_{C}$ of a smooth genus $2$ curve induces a degree $2$ morphism on $\mathbb{P}^{1}$ branched in $6$ points. So we have a morphism 
$$f:M_{2}\rightarrow M_{0,6}/S_{6} \cong M_{0,6}^{un},\: \phi\mapsto \tilde{\phi},$$ 
and since from a $6$-pointed smooth rational curve we can reconstruct the corresponding genus $2$ curve $f$ is indeed an isomorphism. Then $\phi$ induces an automorphism $\tilde{\phi}$ of $M_{0,6}^{un}$, again by \cite[Theorem 6.1]{Mok} we have $\tilde{\phi} = Id_{M_{0,6}^{un}}$ and therefore $\phi = Id_{\overline{M}_{2}}$.
\end{proof}

\begin{Proposition}\label{Mg1}
For any $g\geq 2$ the only automorphism of $\overline{M}_{g,1}$ is the identity. Furthermore $\Aut(\overline{M}_{1,3})\cong S_{3}$.
\end{Proposition}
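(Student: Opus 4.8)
The plan is to prove the two assertions separately, in both cases using Theorem \ref{GKM} (for $g\geq 2$) and Lemma \ref{g1} (for $g=1$) to reduce everything to the forgetful fibrations. For the first assertion, let $\phi\in\Aut(\overline{M}_{g,1})$ with $g\geq 2$ and consider the unique forgetful morphism $\pi:\overline{M}_{g,1}\rightarrow\overline{M}_{g}$. I would fix an ample class $H$ on $\overline{M}_{g}$ and set $D:=\phi^{*}\pi^{*}H=(\pi\circ\phi)^{*}H$. This $D$ is nef, and it is not big since $\pi\circ\phi$ has positive dimensional fibres, so its Iitaka dimension is at most $\dim\overline{M}_{g}=3g-3<3g-2$. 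As $\pi$ is the only forgetful morphism out of $\overline{M}_{g,1}$, Theorem \ref{GKM} forces $D=\pi^{*}H'$ for some nef $H'$. For a fibre $F$ of $\pi$ the projection formula gives $D\cdot F=H'\cdot\pi_{*}F=0$, hence $\pi^{*}H\cdot\phi_{*}F=0$, and ampleness of $H$ shows $\phi(F)$ is contracted by $\pi$. Thus $\pi\circ\phi$ contracts every fibre of $\pi$ and, by rigidity, factors as $\pi\circ\phi=\overline{\phi}\circ\pi$ with $\overline{\phi}\in\Aut(\overline{M}_{g})$. Proposition \ref{autmg} gives $\overline{\phi}=\mathrm{id}$, so $\pi\circ\phi=\pi$ and $\phi$ preserves every fibre of $\pi$.

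For $g\geq 3$ the argument then ends at once: the general fibre of $\pi$ is a smooth curve $C$ with $\Aut(C)$ trivial, so $\phi$ restricts to the identity on a dense family of fibres, whence $\phi=\mathrm{id}$. For $g=2$ the general fibre is $\mathbb{P}^{1}=C/\langle\iota\rangle$ and carries many automorphisms, so I would argue through the boundary instead. Since a rational component carrying only the single marking and one node is unstable, no fibre of $\pi$ over a point of $M_{2}$ meets the boundary; hence $\pi^{-1}(M_{2})=M_{2,1}$ and $\partial\overline{M}_{2,1}=\pi^{-1}(\partial\overline{M}_{2})$. As $\phi$ preserves the fibres of $\pi$, it therefore preserves the boundary and restricts to an automorphism of $M_{2,1}=M_{2,1}^{un}$. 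Because $2g-2+n=3$, Royden's theorem \cite[Theorem 6.1]{Mok} forces this restriction to be the identity, and thus $\phi=\mathrm{id}$.

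For the second assertion I first observe that permuting the three markings embeds $S_{3}\hookrightarrow\Aut(\overline{M}_{1,3})$, the embedding being injective because the action is effective when $2g-2+n\geq 3$. To bound $\Aut(\overline{M}_{1,3})$ from above I would build a homomorphism $\rho:\Aut(\overline{M}_{1,3})\rightarrow S_{3}$ as follows. Given $\phi$ and an index $i$, Lemma \ref{g1} says the fibration $\pi_{i}\circ\phi$ factors through a forgetful morphism $\pi_{j}$; since $\pi_{1},\pi_{2},\pi_{3}$ have pairwise distinct fibres (the fibre through a general point is cut out by moving the corresponding marking), the index $j=\sigma_{\phi}(i)$ is uniquely determined, and an easy injectivity check shows $\sigma_{\phi}\in S_{3}$. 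One verifies that $\rho(\phi):=\sigma_{\phi}$ is a homomorphism and that on the embedded $S_{3}$ one has $\sigma_{\tau}=\tau^{-1}$, so $\rho$ restricts to a bijection there; hence $\rho$ is surjective and it remains only to prove $\ker\rho=1$.

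So suppose $\sigma_{\phi}=\mathrm{id}$, i.e. $\pi_{i}\circ\phi=\overline{\phi}_{i}\circ\pi_{i}$ with $\overline{\phi}_{i}\in\Aut(\overline{M}_{1,2})$ for every $i$. The heart of the matter, and the step I expect to be the main obstacle, is to show each $\overline{\phi}_{i}$ is the identity; note that Royden's rigidity is of no help on $\overline{M}_{1,2}$, where $2g-2+n=2<3$ and indeed $\Aut(\overline{M}_{1,2})\cong(\mathbb{C}^{*})^{2}$ is positive dimensional. Here I would exploit the explicit geometry: by Theorem \ref{wbu} the surface $\overline{M}_{1,2}$ is toric with contractible $(-1)$-type curve $\Delta_{0,2}$, and by Lemma \ref{delta02} every automorphism of $\overline{M}_{1,3}$ preserves $\Delta_{0,2}$. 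Combining the mutual compatibility of the three relations $\pi_{i}\circ\phi=\overline{\phi}_{i}\circ\pi_{i}$ with the facts that $\phi$ permutes the boundary divisors and fixes the distinguished curve $\Delta_{0,2}$ should pin down each $\overline{\phi}_{i}$ inside the torus and force it to be trivial. Once $\overline{\phi}_{i}=\mathrm{id}$ for all $i$, the product map $(\pi_{1},\pi_{2},\pi_{3})$ is generically injective, so $\phi$ fixes a dense open subset and hence $\phi=\mathrm{id}$. This gives $\ker\rho=1$ and therefore $\Aut(\overline{M}_{1,3})\cong S_{3}$.
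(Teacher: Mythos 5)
Your proof of the first assertion is correct and follows the paper's route: the nef-divisor computation plus rigidity is an expanded version of the paper's appeal to Theorem \ref{GKM} to get the factorization $\pi\circ\phi=\overline{\phi}\circ\pi$, the case $g\geq 3$ is finished identically, and for $g=2$ your deduction of boundary preservation (from $\overline{\phi}=\mathrm{id}$ via Proposition \ref{autmg} together with $\partial\overline{M}_{2,1}=\pi^{-1}(\partial\overline{M}_{2})$) followed by Royden's theorem \cite[Theorem 6.1]{Mok} matches the paper, which obtains boundary preservation slightly differently, by contradiction with Lemma \ref{Mg}.

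The second assertion is where the proposal has a genuine gap, and exactly at the step you flag as the main obstacle: you never prove that $\sigma_{\phi}=\mathrm{id}$ forces $\overline{\phi}_{i}=\mathrm{id}$. The claim that the toric structure of $\overline{M}_{1,2}$, the preservation of $\Delta_{0,2}$, and the mutual compatibility of the three squares ``should pin down each $\overline{\phi}_{i}$ inside the torus and force it to be trivial'' is a hope, not an argument; worse, no property of $\overline{\phi}_{i}$ as an automorphism of $\overline{M}_{1,2}$ can possibly do the job, because every element of $\Aut(\overline{M}_{1,2})\cong(\mathbb{C}^{*})^{2}$ is toric and hence already stabilizes $\Delta_{0,2}$ and the other distinguished (torus-invariant) curves. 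The constraint must be extracted from how $\phi$ acts on the boundary of $\overline{M}_{1,3}$ itself, and this is the crux of the paper's proof: for a general $[C,x_{1},x_{2}]\in\overline{M}_{1,2}$, the fibre $\pi_{i}^{-1}([C,x_{1},x_{2}])$ meets the divisor $\Delta_{0,2}\subset\overline{M}_{1,3}$ in exactly two points, naturally identified with $x_{1},x_{2}$; since $\phi$ maps this fibre isomorphically to the fibre over $[C',x_{1}',x_{2}']:=\overline{\phi}_{i}([C,x_{1},x_{2}])$ and preserves $\Delta_{0,2}$ by Lemma \ref{delta02}, the two intersection points go to the two intersection points, which identifies $(C,\{x_{1},x_{2}\})$ with $(C',\{x_{1}',x_{2}'\})$ as unordered pointed curves; Remark \ref{un}, i.e. $\overline{M}_{1,2}\cong\overline{M}_{1,2}^{un}$, then yields $[C',x_{1}',x_{2}']=[C,x_{1},x_{2}]$, so $\overline{\phi}_{i}=\mathrm{id}$. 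Your closing step is also asserted rather than proved: generic injectivity of $(\pi_{1},\pi_{2},\pi_{3})$ is true but needs a verification (on a general elliptic curve, three pairwise identifications of the form $z\mapsto\pm z+t_{k}$ cannot match all three pairs of markings unless they glue to a single automorphism fixing all three points); the paper avoids this by noting that $\phi$ preserves the fibres of both $\pi_{1}$ and $\pi_{2}$ through the given point, hence fixes their unique intersection point, hence fixes both $x_{1}$ and $x_{2}$ on $\pi_{1}^{-1}([C,x_{1},x_{2}])\cong C$, and a general $2$-pointed elliptic curve has no nontrivial automorphisms.
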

\begin{proof}
Let $\phi:\overline{M}_{g,1}\rightarrow\overline{M}_{g,1}$ be an automorphism. By Theorem \ref{GKM} the fibration $$\pi_{1}\circ\phi:\overline{M}_{g,1}\rightarrow\overline{M}_{g}$$ 
factors through a forgetful morphism which is necessarily $\pi_{1}$. We have a commutative diagram
  \[
  \begin{tikzpicture}[xscale=2.3,yscale=-1.2]
    \node (A0_0) at (0, 0) {$\overline{M}_{g,1}$};
    \node (A0_1) at (1, 0) {$\overline{M}_{g,1}$};
    \node (A1_0) at (0, 1) {$\overline{M}_{g}$};
    \node (A1_1) at (1, 1) {$\overline{M}_{g}$};
    \path (A0_0) edge [->]node [auto] {$\scriptstyle{\phi}$} (A0_1);
    \path (A0_0) edge [->]node [auto,swap] {$\scriptstyle{\pi_{1}}$} (A1_0);
    \path (A0_1) edge [->]node [auto] {$\scriptstyle{\pi_{1}}$} (A1_1);
    \path (A1_0) edge [->]node [auto] {$\scriptstyle{\overline{\phi}}$} (A1_1);
  \end{tikzpicture}
  \]
so the morphism $\phi$ maps the fiber of $\pi_{1}$ over $[C]$ to the fiber of $\pi_{1}$ over $[C^{'}] := \overline{\phi}([C])$. Now we distinguish two cases.
\begin{itemize}
\item[-] If $g > 2$ then $\pi_{1}^{-1}([C])$ is a smooth genus $g$ curve, so it is automorphisms-free. Let $[C],[C^{'}]\in \overline{M}_{g}$ be two general points, then $\pi_{1}^{-1}([C])\cong C$, $\pi_{1}^{-1}([C^{'}])\cong C^{'}$ and
$$\phi_{|\pi_{1}^{-1}([C])}:C\rightarrow C^{'}$$
is an isomorphism. So $C^{'}\cong C$, $[C^{'}] := \overline{\phi}([C]) = [C]$ and $\overline{\phi} = Id_{\overline{M}_{g}}$. We are thus reduced to a commutative triangle  
  \[
  \begin{tikzpicture}[xscale=2.3,yscale=-1.2]
    \node (A0_0) at (0, 0) {$\overline{M}_{g,1}$};
    \node (A0_2) at (2, 0) {$\overline{M}_{g,1}$};
    \node (A1_1) at (1, 1) {$\overline{M}_{g}$};
    \path (A0_0) edge [->]node [auto,swap] {$\scriptstyle{\pi_{1}}$} (A1_1);
    \path (A0_2) edge [->]node [auto] {$\scriptstyle{\pi_{1}}$} (A1_1);
    \path (A0_0) edge [->]node [auto] {$\scriptstyle{\phi}$} (A0_2);
  \end{tikzpicture}
  \]
and for any $[C]\in\overline{M}_{g}$ the restriction of $\phi$ to the fiber of $\pi_{1}$ defines an automorphism of the fiber. Since $g > 2$ we conclude that $\phi$ is the identity on the general fiber of $\pi_{1}$ so it has to be the identity on $\overline{M}_{g,1}$.
\item[-] Consider now the case $g = 2$. Let $\phi:\overline{M}_{2,1}\rightarrow\overline{M}_{2,1}$ be an automorphism. As usual we have a commutative diagram 
\[
  \begin{tikzpicture}[xscale=2.3,yscale=-1.2]
    \node (A0_0) at (0, 0) {$\overline{M}_{2,1}$};
    \node (A0_1) at (1, 0) {$\overline{M}_{2,1}$};
    \node (A1_0) at (0, 1) {$\overline{M}_{2}$};
    \node (A1_1) at (1, 1) {$\overline{M}_{2}$};
    \path (A0_0) edge [->]node [auto] {$\scriptstyle{\phi}$} (A0_1);
    \path (A0_0) edge [->]node [auto,swap] {$\scriptstyle{\pi_{1}}$} (A1_0);
    \path (A0_1) edge [->]node [auto] {$\scriptstyle{\pi_{1}}$} (A1_1);
    \path (A1_0) edge [->]node [auto] {$\scriptstyle{\overline{\phi}}$} (A1_1);
  \end{tikzpicture}
  \]
The boundary of $\overline{M}_{2,1}$ has two codimension one components parametrizing curves whose dual graphs are 
$$
\begin{tabular}{ccc}
\begin{tikzpicture}[scale=.5,inner sep=10][baseline]
      \path(0,0) ellipse (2 and 1);
      \tikzstyle{level 1}=[counterclockwise from=-90,level distance=9mm,sibling angle=0]
      \ndo (A0) at (0:1) {$\scriptstyle{1_{1}}$} child;

      \draw (A0) .. controls +(75.000000:1.2) and +(105.000000:1.2) .. (A0);
    \end{tikzpicture}
&
\qquad
&
\begin{tikzpicture}[scale=.5,inner sep=10][baseline]
      \path(0,0) ellipse (2 and 1);
      \ndo (A0) at (0:1) {$\scriptstyle{1_{0}}$};
      \tikzstyle{level 1}=[counterclockwise from=180,level distance=9mm,sibling angle=0]
      \ndo (A1) at (180:1) {$\scriptstyle{1_{1}}$} child;

      \path (A0) edge [bend left=0.000000] (A1);
    \end{tikzpicture}
\end{tabular}
$$
Similarly the boundary of $\overline{M}_{2}$ has two irreducible components parametrizing curves with dual graphs
$$
\begin{tabular}{ccc}
\begin{tikzpicture}[scale=.5,inner sep=10][baseline]
      \path(0,0) ellipse (2 and 1);
      \ndo (A0) at (0:1) {$\scriptstyle{1_{0}}$};

      \draw (A0) .. controls +(-15.000000:1.2) and +(15.000000:1.2) .. (A0);
    \end{tikzpicture}
&
\qquad
&
\begin{tikzpicture}[scale=.5,inner sep=10][baseline]
      \path(0,0) ellipse (2 and 1);
      \ndo (A0) at (0:1) {$\scriptstyle{1_{0}}$};
      \ndo (A1) at (180:1) {$\scriptstyle{1_{0}}$};

      \path (A0) edge [bend left=0.000000] (A1);
    \end{tikzpicture}
\end{tabular}
$$
\end{itemize}
Clearly $\pi_{1}(\Delta_{irr,1}) = \Delta_{irr}$ and $\pi_{1}(\Delta_{1,1}) = \Delta_{1}$. Suppose that $\phi$ maps either the class of a nodal curve or the class of the union of two elliptic curves to the class of smooth genus $2$ curve then $\overline{\phi}$ has to do the same, and this contradicts Lemma \ref{Mg}.\\
Then $\phi$ maps an open subset of $\partial\overline{M}_{1,2}$ to an open subset of $\partial\overline{M}_{1,2}$ and both these open sets has to intersect the irreducible components of $\partial\overline{M}_{1,2}$. Now the continuity of $\phi$ is enough to conclude that $\phi$ preserves the boundary of $\overline{M}_{2,1}$.\\
Then $\phi$ restrict to an automorphism $M_{2,1}\rightarrow M_{2,1}$. By \cite[Theorem 6.1]{Mok} the only automorphism of $M_{2,1}$ is the identity. Finally $\phi_{|M_{2,1}} = Id_{M_{2,1}}$ implies $\phi = Id_{\overline{M}_{2,1}}$.\\
Consider now the case $g = 1, n = 3$. By Lemma \ref{g1} there exists a factorization $\pi_{i}\circ\phi^{-1} = \overline{\phi^{-1}}\circ\pi_{j_{i}}$, furthermore by Lemma \ref{fl} this factorization is unique. So we have a well defined morphism
$$\chi:\Aut(\overline{M}_{1,3})\rightarrow S_{3},\: \phi\mapsto\sigma_{\phi}$$
where 
$$\sigma_{\phi}:\{1,2,3\}\rightarrow\{1,2,3\},\: i\mapsto j_{i}.$$
Let $\phi$ be an automorphism of $\overline{M}_{1,3}$ inducing the trivial permutation. Then we have three commutative diagrams
  \[
  \begin{tikzpicture}[xscale=2.3,yscale=-1.2]
    \node (A0_0) at (0, 0) {$\overline{M}_{1,3}$};
    \node (A0_1) at (1, 0) {$\overline{M}_{1,3}$};
    \node (A1_0) at (0, 1) {$\overline{M}_{1,2}$};
    \node (A1_1) at (1, 1) {$\overline{M}_{1,2}$};
    \path (A0_0) edge [->]node [auto] {$\scriptstyle{\phi}$} (A0_1);
    \path (A1_0) edge [->]node [auto] {$\scriptstyle{\overline{\phi}}$} (A1_1);
    \path (A0_1) edge [->]node [auto] {$\scriptstyle{\pi_{i}}$} (A1_1);
    \path (A0_0) edge [->]node [auto,swap] {$\scriptstyle{\pi_{i}}$} (A1_0);
  \end{tikzpicture}
  \]
Let $[C,x_{1},x_{2}]\in \overline{M}_{1,2}$ be a general point. The fiber $\pi_{i}^{-1}([C,x_{1},x_{2}])$ intersects the boundary divisors $\Delta_{0,2}\subset \overline{M}_{1,3}$ in two points corresponding to curves with the following dual graph
$$
\begin{tikzpicture}[scale=.5,inner sep=10][baseline]
      \path(0,0) ellipse (2 and 1);
      \tikzstyle{level 1}=[counterclockwise from=-60,level distance=9mm,sibling angle=120]
      \ndo (A0) at (0:1) {$\scriptstyle{0_{2}}$} child child;
      \tikzstyle{level 1}=[counterclockwise from=180,level distance=9mm,sibling angle=0]
      \ndo (A1) at (180:1) {$\scriptstyle{1_{1}}$} child;

      \path (A0) edge [bend left=0.000000] (A1);
    \end{tikzpicture}
$$ 
The two points in $\pi_{i}^{-1}([C,x_{1},x_{2}])\cap \Delta_{0,2}$ can be identified with $x_{1},x_{2}$. Now let $[C^{'},x_{1}^{'},x_{2}^{'}]$ be the image of $[C,x_{1},x_{2}]$ via $\overline{\phi}$. Similarly $\pi_{i}^{-1}([C^{'},x_{1}^{'},x_{2}^{'}])\cap \Delta_{0,2} = \{x_{1}^{'},x_{2}^{'}\}$. By Lemma \ref{delta02} we have $\phi(\pi_{i}^{-1}([C,x_{1},x_{2}])\cap \Delta_{0,2}) = \pi_{i}^{-1}([C^{'},x_{1}^{'},x_{2}^{'}])\cap \Delta_{0,2}$ and by Remark \ref{un} $[C^{'},x_{1}^{'},x_{2}^{'}] = [C,x_{1},x_{2}]$ and $\overline{\phi}$ has to be identity.\\
So $\phi$ restrict to an automorphism of the elliptic curve $\pi_{1}^{-1}([C,x_{1},x_{2}])\cong C$ mapping the set $\{x_{1},x_{2}\}$ into itself. On the other hand $\phi$ restricts to an automorphism of the elliptic curve $\pi_{2}^{-1}([C,x_{1},x_{2}])\cong C$ with the same property. Note that $\pi_{2}^{-1}([C,x_{1},x_{2}])\cap\pi_{1}^{-1}([C,x_{1},x_{2}]) = \{x_{1}\}$. The situation is resumed in the following picture:
\[
  \begin{tikzpicture}[xscale=2.3,yscale=-1.2]
    \node (A0_1) at (1, 0) {};
    \node (A1_0) at (0, 1) {};
    \node (A1_1) at (1, 1) {$\bullet$};
    \node (A1_2) at (2, 1) {$\bullet$};
    \node (A1_3) at (3, 1) {};
    \node (A1_4) at (4, 1) {$\pi_{2}^{-1}([C,x_{1},x_{2}])$};
    \node (A2_1) at (1, 2) {$\bullet$};
    \node (A3_1) at (1, 3) {$\pi_{1}^{-1}([C,x_{1},x_{2}])$};
    \path (A2_1) edge [-,bend left=15]node [auto] {$\scriptstyle{}$} (A3_1);
    \path (A1_0) edge [-]node [auto] {$\scriptstyle{}$} (A1_0);
    \path (A1_0) edge [-,bend left=15]node [auto] {$\scriptstyle{}$} (A1_1);
    \path (A1_1) edge [-,bend right=15]node [auto] {$\scriptstyle{}$} (A1_2);
    \path (A1_1) edge [-,bend right=15]node [auto] {$\scriptstyle{}$} (A2_1);
    \path (A1_2) edge [-,bend left=15]node [auto] {$\scriptstyle{}$} (A1_3);
    \path (A0_1) edge [-,bend left=15]node [auto] {$\scriptstyle{}$} (A1_1);
  \end{tikzpicture}
\]
Combining these two facts we have that $\phi$ restricts to an automorphism of $\pi_{1}^{-1}([C,x_{1},x_{2}])\cong C$ fixing $x_{1}$ and $x_{2}$. Since $C$ is a general elliptic curve we have that $\phi_{|\pi_{1}^{-1}([C,x_{1},x_{2}])}$ is the identity, and since $[C,x_{1},x_{2}]\in\overline{M}_{1,2}$ is general we conclude that $\phi = Id_{\overline{M}_{1,3}}$. 
\end{proof}

The arguments used in the cases $g\geq 2$ and $g = 1, n\geq 3$ completely fail in the case $g = 1, n = 2$. However, Theorem \ref{wbu} provides a very explicit description of $\overline{M}_{1,2}$ which allows us to describe its automorphism group. Since $\overline{M}_{1,2}$ is a toric surface we know that $(\mathbb{C}^{*})^{2}\subseteq\Aut(\overline{M}_{1,2})$.

\begin{Remark}\label{imp}
The automorphisms of $\mathbb{P}(a_{0},\dots,a_{n})$ are the automorphisms of the graded $k$-algebra $S = k[x_{0},\dots,x_{n}]$.  In particular the automorphisms of $\mathbb{P}(1,2,3)$ are of the form 
$$
\begin{array}{l}
x_{0}\mapsto \alpha_{0}x_{0},\\
x_{1}\mapsto \alpha_{1}x_{0}^{2}+\beta_{1}x_{1},\\
x_{2}\mapsto \alpha_{2}x_{0}^{3}+\beta_{2}x_{0}x_{1}+\gamma_{2}x_{2},
\end{array}
$$
and the the automorphisms of $\mathbb{P}(1,2,3)$ fixing $[1:0:0]$ are of the form 
$$
\begin{array}{l}
x_{0}\mapsto \alpha_{0}x_{0},\\
x_{1}\mapsto \beta_{1}x_{1},\\
x_{2}\mapsto \beta_{2}x_{0}x_{1}+\gamma_{2}x_{2},
\end{array}
$$
with $\alpha_{0},\beta_{1},\gamma_{2}\in k^{*}$ and $\beta_{2}\in k$. The composition law in this group is given by
$$(\alpha_{0},\beta_{1},\beta_{2},\gamma_{2})\ast (\alpha_{0}^{'},\beta_{1}^{'},\beta_{2}^{'},\gamma_{2}^{'}) = (\alpha_{0}\alpha_{0}^{'},\beta_{1}\beta_{1}^{'},\alpha_{0}\beta_{1}\beta_{2}^{'}+\beta_{2}\gamma_{2}^{'},\gamma_{2}\gamma_{2}^{'}).$$
This remark highlights why the automorphisms of the coarse moduli space $\overline{M}_{g,n}$ in general should be different from the automorphisms of the stack $\overline{\mathcal{M}}_{g,n}$. It is well known that $\overline{M}_{1,1}\cong \mathbb{P}^{1}$ and $\overline{\mathcal{M}}_{1,1}\cong \mathbb{P}(4,6)$. Clearly $\mathbb{P}^{1}\cong \mathbb{P}(4,6)$ as varieties, however they are not isomorphic as stacks, indeed $\mathbb{P}(4,6)$ has two stacky points with stabilizers $\mathbb{Z}_{4}$ and $\mathbb{Z}_{6}$. These two points are fixed by any automorphism of $\mathbb{P}(4,6)$ while they are indistinguishable from any other point on the coarse moduli space $\overline{M}_{1,1}$. By the previous description the automorphisms of $\overline{\mathcal{M}}_{1,1}\cong \mathbb{P}(4,6)$ are of the form
$$
\begin{array}{l}
x_{0}\mapsto \alpha_{0}x_{0},\\
x_{1}\mapsto \beta_{1}x_{1},
\end{array}
$$
with $\alpha_{0},\alpha_{1}\in k^{*}$.
\end{Remark}

\begin{Proposition}\label{autm12}
The automorphism group of $\overline{M}_{1,2}$ is isomorphic to $(\mathbb{C}^{*})^{2}$.
\end{Proposition}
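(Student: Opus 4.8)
The plan is to leverage the very explicit geometry of Theorem \ref{wbu}: realize $\Aut(\overline{M}_{1,2})$ as a subgroup of $\Aut(\mathbb{P}(1,2,3))$ via the contraction of $\Delta_{0,2}$, and then cut that subgroup down by a weight computation. First I would record one inclusion for free: by Theorem \ref{wbu} the surface $\overline{M}_{1,2}$ is toric, so its open two-torus acts by automorphisms and $(\mathbb{C}^{*})^{2}\subseteq\Aut(\overline{M}_{1,2})$. It then remains to prove the reverse inclusion.

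Let $f\colon\overline{M}_{1,2}\to\mathbb{P}(1,2,3)$ be the weighted blow up of Theorem \ref{wbu}, which contracts exactly $\Delta_{0,2}$ to the smooth point $[1:0:0]$. By Lemma \ref{delta02} every $\phi\in\Aut(\overline{M}_{1,2})$ satisfies $\phi(\Delta_{0,2})=\Delta_{0,2}$, so $\phi$ descends: the formula $\overline{\phi}=f\circ\phi\circ f^{-1}$ defines an automorphism of $\mathbb{P}(1,2,3)$ on the dense open set where $f$ is an isomorphism, and it extends across $[1:0:0]$ (fixing it since $\phi$ preserves the contracted curve). The assignment $\phi\mapsto\overline{\phi}$ is a group homomorphism $\Aut(\overline{M}_{1,2})\to\Aut(\mathbb{P}(1,2,3),[1:0:0])$, and it is injective because $\overline{\phi}=\mathrm{id}$ forces $\phi=\mathrm{id}$ on a dense open subset. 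By Remark \ref{imp} the target consists of the maps $(\alpha_{0},\beta_{1},\beta_{2},\gamma_{2})$, and modulo the one-parameter subgroup of trivially acting scalars $(c,c^{2},0,c^{3})$ it is three-dimensional.

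The crucial and hardest step is to determine which $\overline{\phi}$ actually lift through $f$, i.e. lie in the image. Normalizing $\alpha_{0}=1$ and working in the local coordinates $s=x_{1}/x_{0}^{2}$, $t=x_{2}/x_{0}^{3}$ on the smooth chart $\{x_{0}\neq 0\}$ centered at $[1:0:0]$, an automorphism of Remark \ref{imp} acts by $s\mapsto\beta_{1}s$ and $t\mapsto\beta_{2}s+\gamma_{2}t$. By Proposition \ref{M12} and Remark \ref{cwbu} the map $f$ is a weighted blow up with genuinely distinct weights $(\omega_{1},\omega_{2})$: the two singularities $\tfrac{1}{4}(2,6)$ and $\tfrac{1}{6}(2,4)$ sit at the two ends of the exceptional $\mathbb{P}(\omega_{1},\omega_{2})$, and tracking them to the $s$- and $t$-axes shows that the direction $s$ along which $\beta_{2}$ mixes carries the strictly smaller weight, $\omega_{1}<\omega_{2}$. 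Such a blow up is governed by the monomial valuation $v$ with $v(s)=\omega_{1}$, $v(t)=\omega_{2}$, and $\overline{\phi}$ lifts if and only if it preserves $v$. Since $v(\beta_{2}s+\gamma_{2}t)=\min(\omega_{1},\omega_{2})=\omega_{1}<\omega_{2}=v(t)$ whenever $\beta_{2}\neq 0$, the valuation is preserved precisely when $\beta_{2}=0$. Hence the image of $\phi\mapsto\overline{\phi}$ lies in the subgroup $\{\beta_{2}=0\}$, which modulo the trivial scalars is exactly the two-dimensional torus $(\mathbb{C}^{*})^{2}$ of $\mathbb{P}(1,2,3)$.

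Finally I would combine the two inclusions. The toric torus $(\mathbb{C}^{*})^{2}\subseteq\Aut(\overline{M}_{1,2})$ acts diagonally and so descends to the full diagonal torus $\{\beta_{2}=0\}\cong(\mathbb{C}^{*})^{2}$ of $\mathbb{P}(1,2,3)$; thus the image of the subgroup $(\mathbb{C}^{*})^{2}$ already equals the image of all of $\Aut(\overline{M}_{1,2})$. Since $\phi\mapsto\overline{\phi}$ is injective, the inclusion $(\mathbb{C}^{*})^{2}\subseteq\Aut(\overline{M}_{1,2})$ must be an equality, giving $\Aut(\overline{M}_{1,2})\cong(\mathbb{C}^{*})^{2}$. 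The main obstacle is the weight computation of the third paragraph: one must verify that the blow-up weights are truly distinct and ordered so that the off-diagonal parameter $\beta_{2}$ cannot survive, since it is exactly this asymmetry of $\mathbb{P}(1,2,3)$ at $[1:0:0]$ that kills the extra unipotent direction and pins the automorphism group down to the torus.
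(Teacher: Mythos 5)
Your proof is correct, and although it shares the paper's overall skeleton---both descend along the contraction $f$ to an injective homomorphism from $\Aut(\overline{M}_{1,2})$ into the automorphisms of $\mathbb{P}(1,2,3)$ fixing $[1:0:0]$, and both identify the image with the diagonal torus---the mechanism by which you kill the off-diagonal parameter $\beta_{2}$ is genuinely different. The paper argues through fixed points: once $\Delta_{0,2}$ is preserved, the four singular points of $\overline{M}_{1,2}$ are individually fixed (their types are pairwise distinguishable, two lying on $\Delta_{0,2}$ and two in the interior), hence the two fibers $F_{4},F_{6}$ of $\pi_{1}$ through them are stabilized, hence the induced automorphism of $\mathbb{P}(1,2,3)$ stabilizes the coordinate curves $L_{4}=\{x_{2}=0\}$ and $L_{6}=\{x_{1}=0\}$, which in the normal form of Remark \ref{imp} forces $\beta_{2}=0$. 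You instead invoke the valuation-theoretic obstruction to lifting across a weighted blow up with distinct weights. Note that you only need the easy half of your ``lifts if and only if it preserves $v$'': every $\overline{\phi}$ in the image has a lift by construction, and that lift preserves $\Delta_{0,2}$ by Lemma \ref{delta02}, so $\overline{\phi}$ preserves $\operatorname{ord}_{\Delta_{0,2}}$; the harder converse (reconstructing the blow up from its valuation ideals) never enters, since for surjectivity you use the toric torus directly, where the paper writes down explicit lifts in Weierstrass coordinates. Your flagged ``main obstacle,'' the orientation of the weights, does check out: making the cyclic actions faithful, $\frac{1}{6}(2,4)$ is an $A_{2}$ point lying over $L_{6}=\{x_{1}=0\}$ and $\frac{1}{4}(2,6)$ is an $A_{1}$ point lying over $L_{4}=\{x_{2}=0\}$, and torically the singularity of index $\omega_{2}$ sits on the strict transform of $\{s=0\}$, so the weights on $(s,t)=(x_{1}/x_{0}^{2},\,x_{2}/x_{0}^{3})$ are $(\omega_{1},\omega_{2})=(2,3)$, giving $v(s)=2<3=v(t)$ exactly as your argument requires; this is also consistent with the $\mathbb{C}^{*}$-weights $(4,6)$ on the Weierstrass coefficients $(a,b)$. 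As for what each approach buys: the paper's argument is elementary and self-contained, resting only on the classification of singular points already carried out in Proposition \ref{M12}; yours is more intrinsic and more general, showing that for any weighted blow up of a surface point with distinct weights the unipotent direction cannot lift, and making transparent why the answer would be different for an ordinary, equal-weight blow up.
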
 
\begin{proof}
By Theorem \ref{wbu} $\overline{M}_{1,2}$ is a weighted blow up of $\mathbb{P}(1,2,3)$ in $[1:0:0]$. Let $\phi$ be an automorphism of $\overline{M}_{1,2}$. Then we have a commutative diagram 
  \[
  \begin{tikzpicture}[xscale=2.3,yscale=-1.2]
    \node (A0_0) at (0, 0) {$\overline{M}_{1,2}$};
    \node (A0_1) at (1, 0) {$\overline{M}_{1,2}$};
    \node (A1_0) at (0, 1) {$\overline{M}_{1,1}$};
    \node (A1_1) at (1, 1) {$\overline{M}_{1,1}$};
    \path (A0_0) edge [->]node [auto] {$\scriptstyle{\phi}$} (A0_1);
    \path (A0_0) edge [->]node [auto,swap] {$\scriptstyle{\pi_{1}}$} (A1_0);
    \path (A0_1) edge [->]node [auto] {$\scriptstyle{\pi_{1}}$} (A1_1);
    \path (A1_0) edge [->]node [auto] {$\scriptstyle{\overline{\phi}}$} (A1_1);
  \end{tikzpicture}
  \]
and $\phi$ has to map fibers of $\pi_{1}$ on fibers of $\pi_{1}$. Let $f:\overline{M}_{1,2}\rightarrow\mathbb{P}(1,2,3)$ be the contraction described in Theorem \ref{wbu}. 
Let $p_{4},p_{6}\in\Delta_{0,2}$ be the two singular points on the exceptional divisor, and let $q_{4},q_{6}\in M_{1,2}$ be the other two singular points. Since $\Delta_{0,2}$ is the only rational contractible curve in $\overline{M}_{1,2}$ it has to be stabilized by $\phi$, furthermore $\phi(p_{4}) = p_{4}$ and $\phi(p_{6}) = p_{6}$. Let $F_{6}$ be the fiber of $\pi_{1}$ trough $p_{6},q_{6}$ and let $F_{4}$ be the fiber of $\pi_{1}$ trough $p_{4},q_{4}$. Since $\phi(q_{4}) = q_{4}$ and $\phi(q_{6}) = q_{6}$ we get $\phi(F_{4}) = F_{4}$ and $\phi(F_{6}) = F_{6}$.\\
We denote by $L_{6}:=f(F_{6}), L_{4}:=f(F_{4})$ the images via $f$ of $F_{6}$ and $F_{4}$ respectively. The automorphism $\phi$ induces via $f$ an automorphism $\tilde{\phi}$ of $\mathbb{P}(1,2,3)$ fixing $[1:0:0]$ and stabilizing $L_{6}, L_{4}$. Let $G$ be the group
$$G:=\{g\in\Aut(\mathbb{P}(1,2,3))\: |\: g([1:0:0]) = [1:0:0],\; g(L_{4}) = L_{4},\; g(L_{6}) = L_{6}\},$$
and consider the morphism of groups
$$\chi:\Aut(\overline{M}_{1,2})\rightarrow G, \: \phi\mapsto\tilde{\phi}.$$
Clearly $\chi$ is injective.\\
Let $x_{0},x_{1},x_{2}$ be the coordinates on $\mathbb{P}(1,2,3)$. Note that the fiber $F_{6}$ corresponding to the Weierstrass curve $C_{6}$ and the fiber $F_{4}$ corresponding to the Weierstrass curve $C_{4}$ are mapped by $f$ in the curves $L_{6} = \{x_{1}=0\}$ and $L_{4} = \{x_{2}=0\}$. By Remark \ref{imp} the automorphisms of $\mathbb{P}(1,2,3)$ fixing $[1:0:0]$ are of the form 
$$
\begin{array}{l}
x_{0}\mapsto \alpha_{0}x_{0},\\
x_{1}\mapsto \beta_{1}x_{1},\\
x_{2}\mapsto \beta_{2}x_{0}x_{1}+\gamma_{2}x_{2},
\end{array}
$$
and forcing an automorphism to stabilize $L_{4}$ and $L_{6}$ gives $\beta_{2} = 0$. Then the automorphisms in $G$ are of the form 
$$
\begin{array}{l}
x_{0}\mapsto \alpha_{0}x_{0},\\
x_{1}\mapsto \beta_{1}x_{1},\\
x_{2}\mapsto \gamma_{2}x_{2},
\end{array}
$$
where $\alpha_{0},\beta_{1},\gamma_{2}\in\mathbb{C}^{*}$, so $G\cong(\mathbb{C}^{*})^{2}$. The automorphism $\tilde{\phi}(x_{0},x_{1},x_{2}) = (\alpha_{0}x_{0},\beta_{1}x_{1},\gamma_{2}x_{2})$ is $\chi(\phi)$ where $\phi$ is the automorphism of $\overline{M}_{1,2}$ acting as $\phi(x,y,a,b) = (\alpha_{0}x,\beta_{1}a,\gamma_{2}b)$. Consider the fibration $\overline{M}_{1,2}\rightarrow\overline{M}_{1,1}$. The automorphism $\phi$ acts on the couple $(a,b)$ as an automorphism of $\overline{M}_{1,1}\cong\mathbb{P}^{1}$ and multiplying by $\alpha_{0}$ on the fibers. So $\chi$ is surjective.
\end{proof}

In order to proceed by induction on $n$ we need the following lemma.

\begin{Lemma}\label{fl}
Let $\phi:\overline{M}_{g,n}\rightarrow\overline{M}_{g,n}$ be an automorphism. For any $j=1,\dots,n$ there exists a commutative diagram
\[
  \begin{tikzpicture}[xscale=2.3,yscale=-1.2] 
    \node (A0_0) at (0, 0) {$\overline{M}_{g,n}$};
    \node (A0_1) at (1, 0) {$\overline{M}_{g,n}$};
    \node (A1_0) at (0, 1) {$\overline{M}_{g,n-1}$};
    \node (A1_1) at (1, 1) {$\overline{M}_{g,n-1}$};
    \path (A0_0) edge [->]node [auto] {$\scriptstyle{\phi}$} (A0_1);
    \path (A0_0) edge [->]node [auto,swap] {$\scriptstyle{\pi_{i}}$} (A1_0);
    \path (A0_1) edge [->]node [auto] {$\scriptstyle{\pi_{j}}$} (A1_1);
    \path (A1_0) edge [->]node [auto] {$\scriptstyle{\overline{\phi}}$} (A1_1);
  \end{tikzpicture}
  \]
\begin{itemize}
\item[-] The morphism $\overline{\phi}$ is an automorphism of $\overline{M}_{g,n-1}$;
\item[-] the factorization of $\pi_{j}\circ\phi$ is unique for any $j = 1,\dots,n$.
\end{itemize}  
\end{Lemma}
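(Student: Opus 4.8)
The plan is to establish the statement in three steps: produce the factorization together with the induced morphism $\overline{\phi}$; promote $\overline{\phi}$ to an automorphism; and prove uniqueness of the factorization. Throughout I would read off the bottom arrow of the square from the relation $\pi_j \circ \phi = \overline{\phi} \circ \pi_i$.

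First I would fix $j$ and examine the composite $\pi_j \circ \phi \colon \overline{M}_{g,n} \to \overline{M}_{g,n-1}$. Because $\phi$ is an isomorphism and $\pi_j$ is a fibration with connected $1$-dimensional fibres, $\pi_j \circ \phi$ is again such a fibration, onto a target of dimension $\dim \overline{M}_{g,n}-1$. For $g \geq 2$ I would apply the consequence of Theorem \ref{GKM} recorded above — every fibration of $\overline{M}_{g,n}$ to a projective variety factors through a projection to some $\overline{M}_{g,i}$ — and note that, the fibres being $1$-dimensional, the factoring projection must forget exactly one marked point, hence equals $\pi_i$ for some $i$. For $g=1$ this is precisely Lemma \ref{g1} with the two indices interchanged, and that lemma already supplies the bottom arrow as an isomorphism. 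In either case one obtains $i$ and a morphism $\overline{\phi}$ with $\pi_j \circ \phi = \overline{\phi} \circ \pi_i$, giving the square. (For $g=0$ the same factorization follows from \textit{Bruno} and \textit{Mella}'s analysis of the fibrations of $\overline{M}_{0,n}$.)

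Next I would check that $\overline{\phi}$ is an automorphism, imitating the proof of Lemma \ref{g1}. Since $\phi$ is an isomorphism it carries a general fibre of $\pi_i$, which is the irreducible universal curve, isomorphically onto a general fibre of $\pi_j$; hence the map induced on the two bases is surjective. To invert it, given $[C',\dots]\in\overline{M}_{g,n-1}$ I would pull back the fibre $\pi_j^{-1}([C',\dots])$ by $\phi$ to a fibre of $\pi_i$ and send $[C',\dots]$ to the image of that fibre under $\pi_i$; this defines a two-sided inverse on a dense open set, so $\overline{\phi}$ is bijective. As $\overline{M}_{g,n-1}$ is normal, a bijective morphism is an isomorphism, which is the first bullet.

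Finally, for uniqueness I would fix $j$ and suppose $\pi_j \circ \phi = \overline{\phi} \circ \pi_i = \overline{\phi}' \circ \pi_{i'}$. Then $\pi_j \circ \phi$ contracts the fibres of both $\pi_i$ and $\pi_{i'}$; but its general fibre is $\phi^{-1}$ of the irreducible curve $\pi_j^{-1}(\mathrm{pt})$, hence a single irreducible curve, so it cannot contain the two distinct irreducible fibres that $\pi_i$ and $\pi_{i'}$ cut out through a general point unless $i=i'$. Once the index is fixed, $\overline{\phi}$ is determined by $\overline{\phi}\circ\pi_i = \pi_j\circ\phi$ because $\pi_i$ is an epimorphism, which gives the second bullet. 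I expect the main obstacle to be the passage from a bijective morphism to an isomorphism for $\overline{\phi}$ — this is exactly where normality of $\overline{M}_{g,n-1}$ and irreducibility of the generic forgetful fibre are indispensable — together with checking that the factoring projection is a genuine single-point forgetful map, which is also what forces the index to be unique.
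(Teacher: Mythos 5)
Your proposal is correct, and its first two steps are exactly the paper's: existence of the square via Theorem \ref{GKM} (together with Lemma \ref{g1} for $g=1$), and invertibility of $\overline{\phi}$ by transporting a fibre of $\pi_j$ through $\phi^{-1}$ to a fibre of $\pi_i$ and pushing down, with normality of $\overline{M}_{g,n-1}$ upgrading the resulting bijective morphism to an isomorphism. The genuine difference is in the uniqueness step, where you reverse the paper's order of deduction. The paper assumes two factorizations $\overline{\phi}_1\circ\pi_i=\overline{\phi}_2\circ\pi_h$, first claims $\overline{\phi}_1=\overline{\phi}_2$ pointwise, and only then cancels the isomorphism to conclude $\pi_i=\pi_h$; that first claim is stated very tersely, and when $i\neq h$ it is not immediate, since $\pi_i$ and $\pi_h$ relabel the surviving marked points differently, so equality of the two composites does not obviously evaluate to equality of the base maps at every point. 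You instead pin down the index first: the general fibre of $\pi_j\circ\phi$ is the $\phi$-preimage of an irreducible curve, hence a single irreducible curve, so it cannot contain the two distinct irreducible fibres of $\pi_i$ and $\pi_{i'}$ through a general point unless $i=i'$; afterwards $\overline{\phi}$ is forced by right-cancelling the surjection $\pi_i$. Your route is slightly longer but more robust: it uses only irreducibility of the general forgetful fibre and surjectivity of $\pi_i$, and it sidesteps the delicate pointwise identification on which the paper's version rests. (One cosmetic imprecision: the general fibre of $\pi_i$ on the coarse space is the quotient $C/\Aut(C,x_{1},\dots)$ rather than the universal curve itself --- e.g.\ it is a $\mathbb{P}^1$ for $g=2$, $n=1$ --- but irreducibility, which is all your argument needs, holds regardless.)
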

\begin{proof}
The existence of such a diagram is ensured by Theorem \ref{GKM} and Lemma \ref{g1}. Let $[C,x_{1},\dots,x_{n-1}]\in \overline{M}_{g,n-1}$ be a point, the automorphism $\phi^{-1}$ maps isomorphically the fiber of $\pi_{j}$ over $[C,x_{1},\dots,x_{n-1}]$ to a fiber $F$ of $\pi_{i}$, so $\pi_{i}(F)=[C^{'},x_{1}^{'},\dots ,x_{n-1}^{'}]$ is a point. Define $\overline{\psi}:\overline{M}_{g,n-1}\rightarrow\overline{M}_{g,n-1}$ as $\overline{\psi}([C,x_{1},\dots ,x_{n-1}])= [C^{'},x_{1}^{'},\dots,x_{n-1}^{'}]$. Clearly $\overline{\psi}$ is the inverse of $\overline{\phi}$.\\
Suppose that $\pi_{j}\circ\phi$ admits two factorizations $\overline{\phi}_{1}\circ\pi_{i}$ and $\overline{\phi}_{2}\circ\pi_{h}$. Then the equality $\overline{\phi}_{1}\circ\pi_{i}([C,x_{1},\dots,x_{n}]) = \overline{\phi}_{2}\circ\pi_{h}([C,x_{1},\dots,x_{n}])$ for any $[C,x_{1},\dots,x_{n}]\in\overline{M}_{g,n}$ implies $$\overline{\phi}_{1}([C,y_{1},\dots,y_{n-1}]) = \overline{\phi}_{2}([C,y_{1},\dots,y_{n-1}])$$ 
for any $[C,y_{1},\dots,y_{n-1}]\in\overline{M}_{g,n-1}$.
Now $\overline{\phi}_{1} = \overline{\phi}_{2}$ implies $\overline{\phi}_{1}\circ\pi_{i} = \overline{\phi}_{1}\circ\pi_{h}$ and since $\overline{\phi}_{1}$ is an isomorphism we have $\pi_{i} = \pi_{h}$. 
\end{proof}

At this point we can prove the general theorem by induction on $n$.

\begin{Theorem}\label{aut}
The automorphism group of $\overline{M}_{g,n}$ is isomorphic to the symmetric group on $n$ elements $S_{n}$
$$\Aut(\overline{M}_{g,n})\cong S_{n}$$
for any $g,n$ such that $2g-2+n\geq 3$.
\end{Theorem}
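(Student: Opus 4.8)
The plan is to argue by induction on $n$, the base cases being supplied by Proposition \ref{autmg} and Proposition \ref{Mg1} (namely $\overline{M}_{g}$ for $g\geq 3$, $\overline{M}_{g,1}$ for $g\geq 2$, and $\overline{M}_{1,3}\cong S_{3}$), together with the theorem of \textit{Bruno} and \textit{Mella} \cite[Theorem 4.3]{BM2} in the genus $0$ range $n\geq 5$; since Theorem \ref{GKM} requires $g\geq 1$, the case $g=0$ is simply quoted and the induction below is carried out for $g\geq 1$. The inclusion $\iota\colon S_{n}\hookrightarrow\Aut(\overline{M}_{g,n})$ coming from relabelling the marked points is evident, so it suffices to construct a homomorphism $\chi\colon\Aut(\overline{M}_{g,n})\to S_{n}$ that splits $\iota$ and is injective; then $\chi$ is forced to be an isomorphism.

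To build $\chi$ I would use Lemma \ref{fl}: for $\phi\in\Aut(\overline{M}_{g,n})$ and each index $i$ the fibration $\pi_{i}\circ\phi^{-1}$ factorizes \emph{uniquely} as $\overline{\psi}\circ\pi_{\sigma_{\phi}(i)}$, and uniqueness makes $i\mapsto\sigma_{\phi}(i)$ a well defined element $\sigma_{\phi}\in S_{n}$, exactly as in the case $g=1,n=3$ of Proposition \ref{Mg1}. Composing factorizations and appealing to uniqueness once more shows that $\chi\colon\phi\mapsto\sigma_{\phi}$ is a group homomorphism; evaluating it on a relabelling $\iota(\tau)$, whose factorization reads off the permutation of the markings, gives $\chi\circ\iota=\mathrm{id}_{S_{n}}$, so $\chi$ is surjective and $\iota$ is split. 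Everything then reduces to showing that $\ker\chi$ is trivial.

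The heart of the argument, and the step I expect to be the main obstacle, is to prove that an automorphism $\phi$ with $\sigma_{\phi}=\mathrm{id}$ is the identity. If $\sigma_{\phi}=\mathrm{id}$, then for every $i$ we obtain a square $\pi_{i}\circ\phi=\overline{\phi}_{i}\circ\pi_{i}$ with $\overline{\phi}_{i}\in\Aut(\overline{M}_{g,n-1})$. I would first show each $\overline{\phi}_{i}$ lies in the kernel of the corresponding $\chi_{n-1}$: forgetting a further marking $k\neq i$ and using that forgetful morphisms commute, one checks that $\pi_{k}\circ\overline{\phi}_{i}$ factorizes through $\pi_{k}$ for all $k$, so $\overline{\phi}_{i}$ induces the trivial permutation and, by the inductive hypothesis, $\overline{\phi}_{i}=\mathrm{id}$. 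Consequently $\pi_{i}\circ\phi=\pi_{i}$ for every $i$, i.e. $\phi$ preserves every fibre of every $\pi_{i}$ and acts as the identity on each base.

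It remains to see that $\phi$ is the identity on the general fibre of $\pi_{i}$, which is a curve isomorphic to the corresponding $C$. For $g\geq 3$ the generic $C$ is automorphisms-free and this is immediate. For $g=2$ and $g=1$ the generic fibre carries nontrivial automorphisms (the hyperelliptic involution, respectively translations and the elliptic involution), so I would pin the fibre down using several forgetful maps at once, as in the $g=1,n=3$ case of Proposition \ref{Mg1} and Remark \ref{un}: the points where the fibre of $\pi_{i}$ meets the boundary divisors $\Delta_{0,\{i,j\}}$ recover the marked points $x_{j}$ with $j\neq i$, and these are fixed by $\phi$ because $\phi$ respects each $\pi_{j}$ and acts trivially on its base. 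Since these $n-1$ fixed points are generic on $C$ (hence off the Weierstrass, respectively $2$-torsion, locus), and $n-1\geq 1$ for $g=2$ while $n-1\geq 3$ for $g=1$, the restriction $\phi|_{\mathrm{fibre}}$ fixes enough generic points to be forced to the identity. As this holds on the general fibre and $\phi$ is the identity on the base, we conclude $\phi=\mathrm{id}$, so $\ker\chi$ is trivial and $\chi$ is an isomorphism, completing the induction.
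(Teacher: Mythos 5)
Your proposal is, in substance, the paper's own proof of Theorem \ref{aut}: the same induction on $n$ anchored at Propositions \ref{autmg} and \ref{Mg1} (with \cite[Theorem 4.3]{BM2} quoted for $g=0$), the same homomorphism $\chi$ extracted from the unique factorization of Lemma \ref{fl}, surjectivity from relabellings, and triviality of $\ker\chi$ by first forcing the induced automorphisms $\overline{\phi}_{i}$ of $\overline{M}_{g,n-1}$ to be the identity and then restricting $\phi$ to general fibers of the forgetful maps. Your two deviations are cosmetic rather than structural: you prove $\overline{\phi}_{i}=\mathrm{id}$ by commuting two forgetful morphisms and invoking the inductive isomorphism $\chi_{n-1}$, where the paper argues that the permutations $\overline{\phi}_{1},\dots,\overline{\phi}_{n}$ must be compatible because they lift to the same $\phi$; and you locate the fixed points of $\phi$ on a general fiber at its intersections with the divisors $\Delta_{0,\{i,j\}}$, where the paper intersects fibers of two different projections.

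One step deserves a warning, although it is a weakness you share with the paper rather than one you introduced. In the case $g=2$, $n=2$ your justification of the fixed point (``fixed by $\phi$ because $\phi$ respects each $\pi_{j}$ and acts trivially on its base'') does not suffice: respecting $\pi_{1}$ and $\pi_{2}$ with trivial action on the bases only confines $\phi([C,x_{1},x_{2}])$ to the two-point set $\{[C,x_{1},x_{2}],\,[C,\eta(x_{1}),x_{2}]\}$, where $\eta$ is the hyperelliptic involution of $C$, and likewise the point $F_{1}\cap\Delta_{0,\{1,2\}}$ could a priori be sent to the interior point $[C,\eta(x_{2}),x_{2}]$. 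So one must still rule out the fiberwise hyperelliptic flip $[C,x_{1},x_{2}]\mapsto[C,\eta(x_{1}),x_{2}]$, for instance by showing that automorphisms of $\overline{M}_{2,2}$ preserve the boundary, in the spirit of Lemma \ref{delta02} or Lemma \ref{Mg}; the paper's parallel assertion of ``one fixed point in the case $g=2$, $n\geq 2$'' is unjustified in exactly the same way when $n=2$. In all the other inductive cases ($g\geq 3$; $g=2$, $n\geq 3$; $g=1$, $n\geq 4$) both your argument and the paper's are sound, because an automorphism of the fiber curve fixing the remaining $n-2$ general marked points is forced to be the identity, and this pins the relevant intersection points down uniquely.
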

\begin{proof}
Proposition \ref{Mg1} gives the cases $g\geq 2, n = 1$ and $g = 1, n = 3$. We proceed by induction on $n$. Let $\phi$ be an automorphism of $\overline{M}_{g,n}$, consider the composition $\pi_{i}\circ\phi^{-1}$. By Theorem \ref{GKM} there exists a factorization $\pi_{i}\circ\phi^{-1} = \overline{\phi^{-1}}\circ\pi_{j_{i}}$, furthermore by Lemma \ref{fl} this factorization is unique. So we have a well defined map
$$\chi:\Aut(\overline{M}_{g,n})\rightarrow S_{n},\: \phi\mapsto\sigma_{\phi}$$
where 
$$\sigma_{\phi}:\{1,\dots,n\}\rightarrow\{1,\dots,n\},\: i\mapsto j_{i}.$$
In order to prove that $\sigma_{\phi}$ is actually a permutation we prove that it is injective. Suppose to have $\sigma_{\phi}(i) = j_{i} = \sigma_{\phi}(h)$. This means that $\phi^{-1}$ defines an isomorphism between the fibers of $\pi_{j_{i}}$ and $\pi_{i}$, but also between the fibers of $\pi_{j_{i}}$ and $\pi_{h}$. This forces $\pi_{i} = \pi_{h}$.\\ 
We now prove that the map $\chi$ is a morphism of groups. Let $\phi,\psi\in\overline{M}_{g,n}$ be two automorphisms. The fibration $\pi_{i}\circ\psi^{-1}$ factorizes through $\pi_{j_{i}}$ and similarly $\pi_{j_{i}}\circ\phi^{-1}$ factorizes though $\pi_{h_{i}}$. By uniqueness of the factorization $\pi_{i}\circ (\psi^{-1}\circ\phi^{-1})$ factorizes through $\pi_{h_{i}}$ also. The situation is resumed in the following commutative diagram
  \[
  \begin{tikzpicture}[xscale=2.3,yscale=-1.2]
    \node (A0_0) at (0, 0) {$\overline{M}_{g,n}$};
    \node (A0_1) at (1, 0) {$\overline{M}_{g,n}$};
    \node (A0_2) at (2, 0) {$\overline{M}_{g,n}$};
    \node (A1_0) at (0, 1) {$\overline{M}_{g,n-1}$};
    \node (A1_1) at (1, 1) {$\overline{M}_{g,n-1}$};
    \node (A1_2) at (2, 1) {$\overline{M}_{g,n-1}$};
    \path (A0_0) edge [->]node [auto] {$\scriptstyle{\phi^{-1}}$} (A0_1);
    \path (A0_1) edge [->]node [auto] {$\scriptstyle{\psi^{-1}}$} (A0_2);
    \path (A1_0) edge [->]node [auto] {$\scriptstyle{\overline{\phi^{-1}}}$} (A1_1);
    \path (A0_2) edge [->]node [auto] {$\scriptstyle{\pi_{i}}$} (A1_2);
    \path (A1_1) edge [->]node [auto] {$\scriptstyle{\overline{\psi^{-1}}}$} (A1_2);
    \path (A1_0) edge [->,bend left=25]node [auto,swap] {$\scriptstyle{\overline{(\phi\circ\psi)^{-1}}}$} (A1_2);
    \path (A0_0) edge [->]node [auto,swap] {$\scriptstyle{\pi_{h_{i}}}$} (A1_0);
    \path (A0_1) edge [->]node [auto] {$\scriptstyle{\pi_{j_{i}}}$} (A1_1);
  \end{tikzpicture}
  \]
This means that $\sigma_{\psi}(i) = j_{i}$, $\sigma_{\phi}(j_{i}) = h_{i}$ and $\sigma_{\phi\circ\psi}(i) = h_{i}$. Then $\sigma_{\phi\circ\psi}(i) = \sigma_{\phi}(j_{i}) = \sigma_{\phi}(\sigma_{\psi}(i))$, that is $\chi(\phi\circ\psi) = \chi(\phi)\circ\chi(\psi)$.\\
Since any permutation of the marked points induces an automorphism of $\overline{M}_{g,n}$ the morphism $\chi$ is surjective. Now we compute its kernel.\\
Let $\phi\in\Aut(\overline{M}_{g,n})$ be an automorphism such that $\chi(\phi)$ is the identity, that is for any $i = 1,\dots,n$ the fibration $\pi_{i}\circ\phi$ factors through $\pi_{i}$ and we have $n$ commutative diagrams
  \[
  \begin{tikzpicture}[xscale=2.3,yscale=-1.2]
    \node (A0_0) at (0, 0) {$\overline{M}_{g,n}$};
    \node (A0_1) at (1, 0) {$\overline{M}_{g,n}$};
    \node (A1_0) at (0, 1) {$\overline{M}_{g,n-1}$};
    \node (A1_1) at (1, 1) {$\overline{M}_{g,n-1}$};
    \path (A0_0) edge [->]node [auto] {$\scriptstyle{\phi}$} (A0_1);
    \path (A1_0) edge [->]node [auto,swap] {$\scriptstyle{\overline{\phi}_{1}}$} (A1_1);
    \path (A0_1) edge [->]node [auto] {$\scriptstyle{\pi_{1}}$} (A1_1);
    \path (A0_0) edge [->]node [auto,swap] {$\scriptstyle{\pi_{1}}$} (A1_0);
  \end{tikzpicture}
  \quad
  \cdots
  \quad
  \begin{tikzpicture}[xscale=2.3,yscale=-1.2]
    \node (A0_0) at (0, 0) {$\overline{M}_{g,n}$};
    \node (A0_1) at (1, 0) {$\overline{M}_{g,n}$};
    \node (A1_0) at (0, 1) {$\overline{M}_{g,n-1}$};
    \node (A1_1) at (1, 1) {$\overline{M}_{g,n-1}$};
    \path (A0_0) edge [->]node [auto] {$\scriptstyle{\phi}$} (A0_1);
    \path (A1_0) edge [->]node [auto,swap] {$\scriptstyle{\overline{\phi}_{n}}$} (A1_1);
    \path (A0_1) edge [->]node [auto] {$\scriptstyle{\pi_{n}}$} (A1_1);
    \path (A0_0) edge [->]node [auto,swap] {$\scriptstyle{\pi_{n}}$} (A1_0);
  \end{tikzpicture}
  \]
By Lemma \ref{fl} the morphisms $\overline{\phi}_{i}$ are automorphisms of $\overline{M}_{g,n-1}$ and by induction hypothesis $\overline{\phi}_{1},\dots,\overline{\phi}_{n}$ act on $\overline{M}_{g,n-1}$ as permutations.\\ 
The action of $\overline{\phi}_{i}$ on the marked points $x_{1},\dots,x_{i-1},x_{i+1},\dots,x_{n}$ has to lift to the same automorphism $\phi$ for any $i=1,\dots,n$. So the actions of $\overline{\phi}_{1},\dots,\overline{\phi}_{n}$ have to be compatible and this implies $\overline{\phi}_{i} = Id_{\overline{M}_{g,n-1}}$ for any $i=1,\dots,n$. We distinguish two cases.
\begin{itemize}
\item[-] Assume $g\geq 3$. It is enough to observe that $\phi$ restricts to an automorphism of the fibers of $\pi_{1}$. Then $\phi$ restricts to the identity on the general fiber of $\pi_{1}$, so $\phi = Id_{\overline{M}_{g,n}}$.
\item[-] Assume $g = 1,2$. Note that $\phi$ restricts to an automorphism of the fibers of $\pi_{1}$ and $\pi_{2}$. So $\phi$ defines an automorphism of the fiber of $\pi_{1}$ with at least two fixed points in the case $g = 1, n\geq 3$ and one fixed point in the case $g = 2, n\geq 2$. Since the general $2$-pointed genus $1$ curve and the general $1$-pointed genus $2$ curves have no non trivial automorphisms we conclude as before that $\phi$ restricts to the identity on the general fiber of $\pi_{1}$, so $\phi = Id_{\overline{M}_{g,n}}$.  
\end{itemize}
This proves that $\chi$ is injective and defines an isomorphism between $\Aut(\overline{M}_{g,n})$ and $S_{n}$.
\end{proof}

We want to use the techniques developed in this section to recover \cite[Theorem 4.3]{BM2}. The moduli spaces $\overline{M}_{0,4}$ is isomorphic to the projective line $\mathbb{P}^{1}$ while $\overline{M}_{0,5}$ is the blow-up of $\mathbb{P}^{2}$ in four points in general position. The following is well known but we want to give a proof following the argument used in Proposition \ref{Mg1}.

\begin{Proposition}\label{M05}
The automorphism group of $\overline{M}_{0,5}$ is isomorphic to $S_{5}$.
\end{Proposition}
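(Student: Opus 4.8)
The plan is to mimic the inductive machinery built in Proposition \ref{Mg1} and Theorem \ref{aut}, but now with the base case $\overline{M}_{0,4}\cong\mathbb{P}^{1}$ playing the role that $\overline{M}_{g,1}$ played in higher genus. The key structural input is the family of forgetful morphisms $\pi_{i}:\overline{M}_{0,5}\rightarrow\overline{M}_{0,4}\cong\mathbb{P}^{1}$ for $i=1,\dots,5$, each of which is a fibration whose general fiber is a copy of $\overline{M}_{0,4}\cong\mathbb{P}^{1}$. First I would show that for any automorphism $\phi$ of $\overline{M}_{0,5}$ and any index $i$, the composite fibration $\pi_{i}\circ\phi^{-1}$ factorizes through some forgetful morphism $\pi_{j_{i}}$. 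In the higher-genus and $g=1$ cases this came from Theorem \ref{GKM}; for $g=0$ one must instead use the explicit birational geometry of $\overline{M}_{0,5}$ as the blow-up of $\mathbb{P}^{2}$ at four general points, identifying the five forgetful fibrations with the five pencils of conics/lines that realize the $\mathbb{P}^{1}$-bundle structures, and observing that these are exactly the five extremal contractions to $\mathbb{P}^{1}$. An automorphism must permute this distinguished finite set of fibration structures, which yields the assignment $i\mapsto j_{i}$.

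Once the factorization is available, the argument proceeds exactly as in Theorem \ref{aut}. I would first establish a uniqueness-of-factorization statement analogous to Lemma \ref{fl}: the induced map $\overline{\phi}$ on $\overline{M}_{0,4}$ is an automorphism, and the factorization through $\pi_{j_{i}}$ is unique, since two distinct forgetful fibrations have fibers meeting in finitely many points and a shared factorization would force $\pi_{i}=\pi_{h}$. This produces a well-defined map
$$\chi:\Aut(\overline{M}_{0,5})\rightarrow S_{5},\quad\phi\mapsto\sigma_{\phi},\qquad \sigma_{\phi}(i)=j_{i}.$$
The injectivity of $\sigma_{\phi}$ (hence that it lands in $S_{5}$) and the fact that $\chi$ is a group homomorphism follow verbatim from the diagram-chasing in Theorem \ref{aut}: if $\sigma_{\phi}(i)=\sigma_{\phi}(h)$ then $\phi^{-1}$ identifies the fibers of both $\pi_{i}$ and $\pi_{h}$ with the fibers of a single $\pi_{j_{i}}$, forcing $i=h$, and the composition rule $\sigma_{\phi\circ\psi}=\sigma_{\phi}\circ\sigma_{\psi}$ comes from stacking the two factorization squares. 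Surjectivity is immediate since every permutation of the five markings induces an automorphism of $\overline{M}_{0,5}$.

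It remains to compute the kernel of $\chi$, and here is where I expect the main obstacle. The inductive step in Theorem \ref{aut} reduced the kernel to automorphisms restricting to the identity on $\overline{M}_{0,4}$ through every $\pi_{i}$, and then concluded using that a general fiber, being a genus $g$ curve (or a marked elliptic curve), had no nontrivial automorphisms fixing enough marked points. In genus $0$ this last input fails: the fiber of $\pi_{1}$ is $\overline{M}_{0,4}\cong\mathbb{P}^{1}$, which has the enormous automorphism group $PGL(2)$. The resolution is that if $\sigma_{\phi}=\mathrm{id}$ then for each $i$ the induced $\overline{\phi}_{i}$ is an automorphism of $\overline{M}_{0,4}\cong\mathbb{P}^{1}$; I must pin these down and show they are trivial. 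Concretely $\phi$ preserves every one of the five forgetful fibrations and hence fixes each of the distinguished boundary sections, so on a general fiber of $\pi_{1}$ the restriction of $\phi$ is an element of $PGL(2)$ fixing the marked points coming from the intersections with the other fibration structures and the boundary divisors. Since a general fiber of $\pi_{1}$ is a $\mathbb{P}^{1}$ carrying at least three distinguished marked points (the four special points of the cross-ratio $\overline{M}_{0,4}$ plus the incidence with the remaining fibrations), an element of $PGL(2)$ fixing three points is the identity. Because this holds on the general fiber of $\pi_{1}$ and, symmetrically, on that of $\pi_{2}$ whose fibers sweep out $\overline{M}_{0,5}$ transversally, I conclude $\phi=\mathrm{Id}_{\overline{M}_{0,5}}$, so $\chi$ is injective and $\Aut(\overline{M}_{0,5})\cong S_{5}$.
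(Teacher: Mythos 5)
Your overall strategy is the same as the paper's: build the homomorphism $\chi:\Aut(\overline{M}_{0,5})\rightarrow S_{5}$ from the fact that every fibration $\overline{M}_{0,5}\rightarrow\overline{M}_{0,4}$ factors through a forgetful morphism, get surjectivity from permutations of the markings, and then kill the kernel by analyzing the action on fibers and boundary. One genuine difference is welcome: where the paper simply cites the factorization property as well known (from Bruno--Mella), you justify it via the geometry of $\Bl_{4}\mathbb{P}^{2}$ as a degree five Del Pezzo surface, whose only five fibration structures over $\mathbb{P}^{1}$ (the four pencils of lines through the blown-up points and the pencil of conics through all four) are exactly the five forgetful morphisms; this is correct and makes the input self-contained.

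The genuine gap is in the kernel computation. You correctly announce that you ``must pin down'' the induced automorphisms $\overline{\phi}_{i}$ of $\overline{M}_{0,4}$ and show they are trivial, but the argument that follows never does this. It slides from ``$\phi$ preserves every one of the five forgetful fibrations'' --- which only means that $\phi$ maps the fiber of $\pi_{i}$ over $t$ to the fiber over $\overline{\phi}_{i}(t)$ --- to ``on a general fiber of $\pi_{1}$ the restriction of $\phi$ is an element of $PGL(2)$ fixing the marked points.'' The latter presupposes that $\phi$ maps each fiber to \emph{itself}, i.e.\ precisely the statement $\overline{\phi}_{1}=\mathrm{Id}$ that was to be proved. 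As written, $\phi$ could a priori cover a nontrivial automorphism of the base, and ``fixing the marked points'' does not even make sense, since $\phi$ then sends the marked points of one fiber to those of a different fiber.

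The gap can be closed with ingredients you already have. Since $\sigma_{\phi}=\mathrm{id}$, for every pair $\{a,b\}$ the divisor $\Delta_{0,\{a,b\}}$ is the unique irreducible curve contracted by the three forgetful maps $\pi_{c}$ with $c\notin\{a,b\}$, so $\phi(\Delta_{0,\{a,b\}})=\Delta_{0,\{a,b\}}$; this is the precise content of your claim that $\phi$ fixes the distinguished boundary sections. Now the six divisors $\Delta_{0,\{a,b\}}$ with $a,b\neq 1$ are contracted by $\pi_{1}$ onto the three boundary points of $\overline{M}_{0,4}$, and commutativity $\pi_{1}\circ\phi=\overline{\phi}_{1}\circ\pi_{1}$ shows that $\overline{\phi}_{1}$ fixes these three points of $\overline{M}_{0,4}\cong\mathbb{P}^{1}$; an automorphism of $\mathbb{P}^{1}$ with three fixed points is the identity, so $\overline{\phi}_{1}=\mathrm{Id}$. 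Only now does $\phi$ restrict to an automorphism of each fiber of $\pi_{1}$, where it fixes the four points cut out by the sections $\Delta_{0,\{1,j\}}$, $j=2,\ldots,5$, hence is the identity on the general fiber, giving $\phi=\mathrm{Id}$. For comparison, the paper reaches the triviality of the $\overline{\phi}_{i}$ by a different route: the four special points of a fiber lie on $(-1)$-curves, so $\phi$ carries them to the special points of the image fiber; therefore each $\overline{\phi}_{i}$ acts on $\overline{M}_{0,4}$ as a permutation of the markings, and the compatibility of the five induced maps, all coming from the same $\phi$, forces $\overline{\phi}_{1}=\cdots=\overline{\phi}_{5}=\mathrm{Id}$ before the fiberwise cross-ratio argument is applied.
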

\begin{proof}
It is well known that any fibration $\overline{M}_{0,5}\rightarrow\overline{M}_{0,4}$ factorizes through a forgetful morphism, see for instance \cite{BM2}. This yields a surjective morphism of groups
$$\chi:\Aut(\overline{M}_{0,5})\rightarrow S_{5}$$
exactly as in Theorem \ref{aut}. Let $\phi$ be an automorphism of $\overline{M}_{0,5}$ inducing the trivial permutation. Then we get five commutative diagrams 
  \[
  \begin{tikzpicture}[xscale=2.3,yscale=-1.2]
    \node (A0_0) at (0, 0) {$\overline{M}_{0,5}$};
    \node (A0_1) at (1, 0) {$\overline{M}_{0,5}$};
    \node (A1_0) at (0, 1) {$\overline{M}_{0,4}$};
    \node (A1_1) at (1, 1) {$\overline{M}_{0,4}$};
    \path (A0_0) edge [->]node [auto] {$\scriptstyle{\phi}$} (A0_1);
    \path (A1_0) edge [->]node [auto] {$\scriptstyle{\overline{\phi}_{i}}$} (A1_1);
    \path (A0_1) edge [->]node [auto] {$\scriptstyle{\pi_{i}}$} (A1_1);
    \path (A0_0) edge [->]node [auto,swap] {$\scriptstyle{\pi_{i}}$} (A1_0);
  \end{tikzpicture}
  \]
for $i = 1,\dots,5$. The fiber of $\pi_{i}$ on $[C,x_{1},\dots,x_{4}]\in \overline{M}_{0,4}$ intersects the boundary $\partial\overline{M}_{0,4}$ in four points corresponding to $x_{1},\dots,x_{4}$.\\ 
Consider $[C^{'},x_{1}^{'},\dots,x_{4}^{'}]:=\overline{\phi}_{i|[C,x_{1},\dots,x_{4}]}([C,x_{1},\dots,x_{4}])$. The points in $\pi_{i}^{-1}([C,x_{1},\dots,x_{4}])\cap\partial\overline{M}_{0,4}$ and in $\pi_{i}^{-1}([C^{'},x_{1}^{'},\dots,x_{4}^{'}])\cap\partial\overline{M}_{0,4}$  lie on $(-1)$-curves, so the automorphism $\phi$ maps the fiber of $\pi_{i}$ over $[C,x_{1},\dots,x_{4}]$ to the fiber of $\pi_{i}$ over $[C^{'},x_{1}^{'},\dots,x_{4}^{i}]$ sending the set $\{x_{1},\dots,x_{4}\}$ to the set $\{x_{1}^{'},\dots,x_{4}^{'}\}$. Then $\overline{\phi}_{1},\dots,\overline{\phi}_{5}$ act as permutations of the marking and since they come from the same automorphism $\phi$ they have to be compatible. This forces $\overline{\phi}_{1}= \dots =\overline{\phi}_{5} = Id_{\overline{M}_{0,4}}$.\\
Let $[C,x_{1},\dots,x_{4}]\in \overline{M}_{0,4}$ be a general point. The automorphism $\phi$ restricts to an automorphism of the fiber $\pi_{1}^{-1}([C,x_{1},\dots,x_{4}])\cong\mathbb{P}^{1}$ stabilizing the subscheme $\{x_{1},\dots,x_{4}\}\subset\pi_{1}^{-1}([C,x_{1},\dots,x_{4}])$. Since $x_{1},\dots,x_{4}$ are general points of $C$ they have a cross-ratio different from the cross-ratio of each permutation. This means that $\phi_{|C}$ is an automorphism of $\mathbb{P}^{1}$ fixing four points. So $\phi$ restricts to the identity on the general fiber of $\pi_{1}$ and this forces $\phi = Id_{\overline{M}_{0,5}}$.
\end{proof}

\begin{Remark}
The moduli space $\overline{M}_{0,5}$ is isomorphic to a Del Pezzo surface of degree $5$, by Proposition \ref{M05} we recover that the automorphism group of such a surface is $S_{5}$. For a direct proof of this classical fact which does not use the theory of moduli spaces see \cite[Section 3]{DI}. 
\end{Remark}

Now with the same argument of Theorem \ref{aut} we can prove the following:

\begin{Theorem}
The automorphism group of $\overline{M}_{0,n}$ is isomorphic to the symmetric group on $n$ elements $S_{n}$
$$\Aut(\overline{M}_{0,n})\cong S_{n}$$
for any $n\geq 5$.
\end{Theorem}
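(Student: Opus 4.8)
The plan is to run the same induction on $n$ as in the proof of Theorem \ref{aut}, with the base case supplied by Proposition \ref{M05}, which gives $\Aut(\overline{M}_{0,5})\cong S_{5}$. The only structural change is that Theorem \ref{GKM}, on which Lemma \ref{fl} rests, is stated only for $g\geq 1$ and is therefore unavailable here. Its role is played by the \emph{Bruno--Mella} description of the fibrations of $\overline{M}_{0,n}$, according to which any fibration $\overline{M}_{0,n}\rightarrow X$ onto a projective variety factors through a forgetful morphism $\pi_{j}$, see \cite{BM2}. Granting this, the argument of Lemma \ref{fl} carries over: for an automorphism $\phi$ of $\overline{M}_{0,n}$ and each index $j$, the composition $\pi_{j}\circ\phi$ factors uniquely as $\overline{\phi}\circ\pi_{i}$ with $\overline{\phi}$ an automorphism of $\overline{M}_{0,n-1}$.

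First I would build the morphism $\chi:\Aut(\overline{M}_{0,n})\rightarrow S_{n}$, $\phi\mapsto\sigma_{\phi}$, exactly as in Theorem \ref{aut}: the unique factorization $\pi_{i}\circ\phi^{-1}=\overline{\phi^{-1}}\circ\pi_{j_{i}}$ defines $\sigma_{\phi}(i)=j_{i}$, and $\sigma_{\phi}$ is injective because an equality $\sigma_{\phi}(i)=\sigma_{\phi}(h)$ would make $\phi^{-1}$ identify the fibers of both $\pi_{i}$ and $\pi_{h}$ with those of a single $\pi_{j_{i}}$, forcing $\pi_{i}=\pi_{h}$. The same bookkeeping with the uniqueness of the factorizations shows $\chi(\phi\circ\psi)=\chi(\phi)\circ\chi(\psi)$, so that $\chi$ is a group homomorphism, and it is surjective because every permutation of the markings is itself an automorphism of $\overline{M}_{0,n}$.

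It then remains to show that $\chi$ is injective, i.e.\ to compute its kernel. Suppose $\chi(\phi)$ is trivial. Then for each $i$ the map $\pi_{i}\circ\phi$ factors through $\pi_{i}$, yielding an automorphism $\overline{\phi}_{i}$ of $\overline{M}_{0,n-1}$; by the inductive hypothesis each $\overline{\phi}_{i}$ is a permutation, and since the various $\overline{\phi}_{i}$ are induced by one and the same $\phi$ they must be mutually compatible, which forces $\overline{\phi}_{i}=\mathrm{Id}$ for every $i$. Consequently $\phi$ preserves every fiber of $\pi_{1}$ and fixes, one by one, the $n-1$ distinguished points of a general fiber $\pi_{1}^{-1}([C,x_{2},\dots,x_{n}])\cong\mathbb{P}^{1}$, namely the boundary points where the forgotten marking collides with $x_{2},\dots,x_{n}$. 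For $n\geq 6$ we have $n-1\geq 5\geq 3$, so $\phi$ restricts to an automorphism of $\mathbb{P}^{1}$ fixing at least three points, hence to the identity on the general fiber; therefore $\phi=\mathrm{Id}_{\overline{M}_{0,n}}$. This proves that $\chi$ is injective, whence $\Aut(\overline{M}_{0,n})\cong S_{n}$.

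The main obstacle is precisely the replacement of Theorem \ref{GKM}: one must be sure that the Bruno--Mella fibration theorem applies to the composite $\pi_{j}\circ\phi$ at every inductive step, so that the genus-$0$ analogue of Lemma \ref{fl} holds for all $n\geq 5$ and not merely in the base case treated in Proposition \ref{M05}. The secondary point to verify carefully is that the $n-1$ boundary marked points of the fiber are fixed individually and not merely permuted; this is where the compatibility of all the $\overline{\phi}_{i}$, equivalently the cross-ratio argument already used in Proposition \ref{M05}, is essential.
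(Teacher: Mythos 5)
Your proposal is correct and follows essentially the same route as the paper: induction on $n$ with base case Proposition \ref{M05}, the homomorphism $\chi:\Aut(\overline{M}_{0,n})\rightarrow S_{n}$ built from the unique factorization of $\pi_{i}\circ\phi^{-1}$ through a forgetful map, and triviality of the kernel by exhibiting enough fixed points on a general fiber $\cong\mathbb{P}^{1}$. If anything, you are more explicit than the paper on the two points it leaves implicit, namely that the Bruno--Mella fibration theorem must replace Theorem \ref{GKM} (which is stated only for $g\geq 1$) at every inductive step, and that the boundary points of a general fiber are fixed individually (via compatibility of the $\overline{\phi}_{i}$, i.e.\ intersecting fibers of distinct forgetful morphisms) rather than merely permuted.
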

\begin{proof}
The step zero of the induction is Proposition \ref{M05}. As usual we have a surjective morphism of groups
$$\chi:\overline{M}_{0,n}\rightarrow S_{n}.$$
Proceeding as in the proof of Theorem \ref{aut} we get that an automorphism $\phi$ inducing the trivial permutation has to restrict to an automorphism of the fiber of $\pi_{i}:\overline{M}_{0,n}\rightarrow\overline{M}_{0,n-1}$ fixing $k\geq 4$ points. So it has to be the identity on the general fiber of $\pi_{i}$, and therefore also on $\overline{M}_{0,n}$.
\end{proof}

In \cite[Corollary 0.12]{GKM} \textit{Gibney}, \textit{Keel} and \textit{Morrison} proved that any automorphism of $\overline{M}_{g}$ must preserve the boundary.\\
From Theorem \ref{aut} follows immediately that the boundary of $\overline{M}_{g,n}$ has a good behavior under the action of $\Aut(\overline{M}_{g,n})$. The result is even stronger than the preservation of the boundary.

\begin{Corollary}
If $2g-2+n\geq 3$ any automorphism of $\overline{M}_{g,n}$ must preserve all strata of the boundary.
\end{Corollary}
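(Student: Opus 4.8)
The plan is to exploit the full strength of Theorem \ref{aut}, namely that the homomorphism $\chi\colon\Aut(\overline{M}_{g,n})\to S_{n}$ is an \emph{isomorphism}, and then to observe that relabeling automorphisms act on boundary strata in a purely combinatorial way. First I would note that the relabeling automorphisms
$$\tau_{\sigma}\colon[C,x_{1},\dots,x_{n}]\mapsto[C,x_{\sigma(1)},\dots,x_{\sigma(n)}],\quad\sigma\in S_{n},$$
form a subgroup $R\leq\Aut(\overline{M}_{g,n})$ isomorphic to $S_{n}$, on which $\chi$ restricts to an isomorphism onto $S_{n}$. Since $\chi$ is itself an isomorphism by Theorem \ref{aut}, we must have $R=\Aut(\overline{M}_{g,n})$; that is, \emph{every} automorphism of $\overline{M}_{g,n}$ is a relabeling automorphism. (Equivalently, any $\phi$ with $\chi(\phi)=\sigma_{\phi}$ coincides with $\tau_{\sigma_{\phi}}$, because both map to $\sigma_{\phi}$ under $\chi$ and the kernel of $\chi$ is trivial by the kernel computation in the proof of Theorem \ref{aut}.)

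The second step is to verify that a relabeling automorphism sends each boundary stratum to a boundary stratum of the same topological type. Recall that the strata of $\partial\overline{M}_{g,n}$ are indexed by stable decorated dual graphs: a graph $\Gamma$ whose vertices carry a genus and a subset of the markings $\{1,\dots,n\}$, with edges recording the nodes. The relabeling $\tau_{\sigma}$ leaves the underlying curve $C$ and its (unmarked) dual graph untouched, changing only which label sits at which marked point; concretely it sends the stratum $M_{\Gamma}$ to the stratum $M_{\sigma\cdot\Gamma}$, where $\sigma\cdot\Gamma$ is obtained from $\Gamma$ by permuting the marking labels via $\sigma$. Since stability is preserved under relabeling and $\sigma\cdot\Gamma$ is again a stable decorated dual graph, $M_{\sigma\cdot\Gamma}$ is again a boundary stratum. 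Therefore $\tau_{\sigma}$ permutes the set of boundary strata, and in particular preserves the stratification as a whole.

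Combining the two steps, every automorphism of $\overline{M}_{g,n}$ permutes the boundary strata, which is precisely the assertion of the corollary. I do not expect a genuine obstacle here: once Theorem \ref{aut} has identified all automorphisms with relabelings, the statement reduces to the elementary observation that the $S_{n}$-action by relabeling markings respects the combinatorial type encoded in the decorated dual graphs. The only point requiring a little care is the identification of $\Aut(\overline{M}_{g,n})$ with the group $R$ of relabelings, which hinges on the triviality of $\ker\chi$ established in the proof of Theorem \ref{aut}; everything else is immediate from the definition of the boundary stratification.
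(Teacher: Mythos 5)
Your proof is correct and follows essentially the same route as the paper: the paper's one-line argument likewise uses Theorem \ref{aut} to identify every automorphism with a relabeling of the markings, and then observes that such a relabeling sends each class $[C,x_{1},\dots,x_{n}]$ to a class of the same topological type, hence permutes the boundary strata. Your write-up just makes explicit the two ingredients (triviality of $\ker\chi$ and the combinatorial action of $S_{n}$ on decorated dual graphs) that the paper leaves implicit.
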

\begin{proof}
Since any automorphism is a permutation the class of a pointed curve $[C,x_{1},\dots,x_{n}]$ is mapped by an automorphism in a class $[C^{'},x_{1}^{'},\dots,x_{n}^{'}]$ representing a pointed curve of the same topological type of the pointed curve $C$.
\end{proof}

\section{Automorphisms of $\overline{\mathcal{M}}_{g,n}$}\label{stack}

Let $\mathcal{X}$ be an algebraic stack over $\mathbb{C}$. A coarse moduli space for $\mathcal{X}$ over $\mathbb{C}$ is a morphism $\pi:\mathcal{X}\rightarrow X$, where $X$ is an algebraic space over $\mathbb{C}$ such that
\begin{itemize}
\item[-] the morphism $\pi$ is universal for morphisms to algebraic spaces,
\item[-] $\pi$ induces a bijection between $|\mathcal{X}|$ and the closed points of $X$, where $|\mathcal{X}|$ denotes the set of isomorphism classes in $\mathcal{X}$. 
\end{itemize}
\begin{Remark}
If $\mathcal{X}$ admits a coarse moduli space $\pi:\mathcal{X}\rightarrow X$ then this is unique up to unique isomorphism.
\end{Remark}
A separated algebraic stack has a coarse moduli space which is a separated algebraic space \cite[Corollary 1.3]{KM}.\\
Let $\mathcal{X}$ be a separated stack admitting a scheme $X$ as coarse moduli space $\pi:\mathcal{X}\rightarrow X$. The map $\pi$ is universal for morphisms in schemes, that is for any morphism $f:\mathcal{X}\rightarrow Y$, with $Y$ scheme, there exists a unique morphisms of schemes $g:X\rightarrow Y$ such that the diagram
  \[
  \begin{tikzpicture}[xscale=2.3,yscale=-1.2]
    \node (A0_0) at (0, 0) {$\mathcal{X}$};
    \node (A0_2) at (2, 0) {$X$};
    \node (A1_1) at (1, 1) {$Y$};
    \path (A0_0) edge [->]node [auto,swap] {$\scriptstyle{f}$} (A1_1);
    \path (A0_2) edge [->]node [auto] {$\scriptstyle{g}$} (A1_1);
    \path (A0_0) edge [->]node [auto] {$\scriptstyle{\pi}$} (A0_2);
  \end{tikzpicture}
  \]
commutes. Now, let $\phi:\mathcal{X}\rightarrow\mathcal{X}$ be an automorphism of the stack $\mathcal{X}$, and consider $\pi\circ\phi:\mathcal{X}\rightarrow X$. Then these exists a unique $\tilde{\phi}$ such that the diagram
  \[
  \begin{tikzpicture}[xscale=2.3,yscale=-1.2]
    \node (A0_0) at (0, 0) {$\mathcal{X}$};
    \node (A0_1) at (1, 0) {$\mathcal{X}$};
    \node (A1_0) at (0, 1) {$X$};
    \node (A1_1) at (1, 1) {$X$};
    \path (A0_0) edge [->]node [auto] {$\scriptstyle{\phi}$} (A0_1);
    \path (A0_0) edge [->]node [auto,swap] {$\scriptstyle{\pi}$} (A1_0);
    \path (A0_1) edge [->]node [auto] {$\scriptstyle{\pi}$} (A1_1);
    \path (A1_0) edge [->]node [auto] {$\scriptstyle{\tilde{\phi}}$} (A1_1);
  \end{tikzpicture}
  \]
commutes. By uniqueness we have $(\tilde{\phi})^{-1} = \tilde{\phi^{-1}}$. So $\tilde{\phi}$ is an automorphisms of $X$, and we get a morphism of groups
$$\Aut(\mathcal{X})\rightarrow \Aut(X),\: \phi\mapsto\tilde{\phi}.$$

\begin{Remark}
Even if $\mathcal{X}$ is a Deligne-Mumford stack with trivial generic stabilizer the above morphism of groups is not necessarily injective. As instance in \cite[Proposition 7.1.1]{ACV} \textit{D. Abramovich}, \textit{A. Corti} and \textit{A. Vistoli} consider a twisted curve $\mathcal{C}$ over an algebraically closed field and its coarse moduli space $C$. They prove that for any node $x\in C$ the stabilizer of a geometric point of $\mathcal{C}$ over $x$ contributes to the automorphism group of $\mathcal{C}$ over $C$.
\end{Remark}

However, since $\overline{\mathcal{M}}_{g,n}$ is a normal, Deligne-Mumford stack, as soon as its general point has trivial stabilizer, the morphism
$$\Aut(\overline{\mathcal{M}}_{g,n})\rightarrow\Aut(\overline{M}_{g,n})$$
is injective. Our next goal is to prove this last statement.

\begin{Proposition}\label{inj}
The morphism of groups
$$\Aut(\overline{\mathcal{M}}_{g,n})\rightarrow\Aut(\overline{M}_{g,n})$$
is injective as soon as the general $n$-pointed genus $g$ curve has no non trivial automorphisms.
\end{Proposition}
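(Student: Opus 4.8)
The plan is to show that the kernel of the group homomorphism $\phi\mapsto\tilde{\phi}$ constructed just above is trivial. By construction $\tilde{\phi}$ is the unique automorphism of $\overline{M}_{g,n}$ with $\pi\circ\phi=\tilde{\phi}\circ\pi$, so an element of the kernel is exactly an automorphism $\phi$ of $\overline{\mathcal{M}}_{g,n}$ with $\pi\circ\phi=\pi$, and I must prove that any such $\phi$ is $2$-isomorphic to the identity.

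First I would isolate the locus where the stack and its coarse space coincide. The hypothesis that the general $n$-pointed genus $g$ curve is automorphisms-free guarantees that the open substack $\mathcal{U}\subseteq\overline{\mathcal{M}}_{g,n}$ parametrizing automorphisms-free curves is nonempty, and since $\overline{\mathcal{M}}_{g,n}$ is irreducible, $\mathcal{U}$ is dense. Over $\mathcal{U}$ every object has trivial stabilizer, so $\pi$ restricts to an isomorphism $\pi|_{\mathcal{U}}\colon\mathcal{U}\xrightarrow{\sim}U$ onto an open subset $U\subseteq\overline{M}_{g,n}$. Because any automorphism of a Deligne-Mumford stack preserves its inertia, $\phi$ preserves the locus of trivial stabilizer, whence $\phi(\mathcal{U})=\mathcal{U}$; composing the relation $\pi|_{\mathcal{U}}\circ\phi|_{\mathcal{U}}=\pi|_{\mathcal{U}}$ with the inverse of the isomorphism $\pi|_{\mathcal{U}}$ then forces $\phi|_{\mathcal{U}}=\mathrm{id}_{\mathcal{U}}$.

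It remains to propagate this equality from the dense open $\mathcal{U}$ to all of $\overline{\mathcal{M}}_{g,n}$, and this is the step I expect to be the main obstacle, precisely because morphisms of stacks live in a $2$-category. The clean way to handle it is to consider the sheaf $\mathcal{I}som(\mathrm{id},\phi)$ of $2$-isomorphisms between the identity and $\phi$, represented by the fibre product of $(\mathrm{id},\phi)\colon\overline{\mathcal{M}}_{g,n}\to\overline{\mathcal{M}}_{g,n}\times\overline{\mathcal{M}}_{g,n}$ with the diagonal $\Delta$. Since $\overline{\mathcal{M}}_{g,n}$ is separated and Deligne-Mumford its diagonal is finite, so the structure morphism $q\colon\mathcal{I}som(\mathrm{id},\phi)\to\overline{\mathcal{M}}_{g,n}$ is finite. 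The identity $2$-isomorphism over $\mathcal{U}$ gives a section of $q$ over $\mathcal{U}$; I would take the closure $\mathcal{W}$ (with reduced structure) of its image, which is irreducible, finite over $\overline{\mathcal{M}}_{g,n}$, and maps isomorphically onto the dense open $\mathcal{U}$. A finite birational morphism onto a normal (indeed smooth) irreducible stack is an isomorphism by Zariski's main theorem, so $\mathcal{W}\to\overline{\mathcal{M}}_{g,n}$ is an isomorphism and produces a section of $q$ over the whole stack, that is, a global $2$-isomorphism $\phi\cong\mathrm{id}$. Hence $\phi$ is trivial in $\Aut(\overline{\mathcal{M}}_{g,n})$ and the morphism is injective. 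The delicate points to verify are the finiteness of the diagonal and the stacky form of Zariski's main theorem; a more hands-on alternative would be to pass to a smooth atlas $V\to\overline{\mathcal{M}}_{g,n}$ and reduce the extension to the classical fact that two morphisms from a reduced scheme to a separated one which agree on a dense open agree everywhere.
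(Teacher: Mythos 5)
Your proof is correct and follows essentially the same route as the paper: both arguments identify a kernel element as an automorphism $\phi$ with $\pi\circ\phi=\pi$, deduce from the hypothesis that $\phi$ is $2$-isomorphic to the identity on the dense open substack $\mathcal{U}$ where the coarse map is an isomorphism, and then extend this $2$-isomorphism to all of $\overline{\mathcal{M}}_{g,n}$ using irreducibility, normality and separatedness. The only difference is that the paper carries out the extension step by invoking \cite[Proposition A.1]{FMN}, whereas you reprove that statement by hand via the finiteness of the $\mathcal{I}som$-space over $\overline{\mathcal{M}}_{g,n}$ and Zariski's main theorem --- which is in substance the proof of the cited proposition.
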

\begin{proof}
In \cite[Proposition A.1]{FMN} take $\mathcal{X} = \mathcal{Y} = \overline{\mathcal{M}}_{g,n}$. Since we consider the case when the general $n$-pointed genus $g$ curve has no non trivial automorphisms there is a dense open subscheme $U\subset\overline{M}_{g,n}$ where the canonical map $\overline{\mathcal{M}}_{g,n}\rightarrow\overline{M}_{g,n}$ is an isomorphism. Note that $\overline{\mathcal{M}}_{g,n}$ is an irreducible normal and separated Deligne-Mumford stack, so the hypothesis of \cite[Proposition A.1]{FMN} are satisfied.\\
Let $f:\overline{\mathcal{M}}_{g,n}\rightarrow\overline{\mathcal{M}}_{g,n}$ be an automorphism inducing the identity on the coarse moduli space $\overline{M}_{g,n}$, then there is a $2$-arrow $\alpha:f_{|U}\Longrightarrow Id_{U}$. By \cite[Proposition A.1]{FMN} there exists a unique $2$-arrow $\overline{\alpha}:f\Longrightarrow Id_{\overline{\mathcal{M}}_{g,n}}$ extending $\alpha$. We conclude that $\overline{\alpha}$ is an isomorphism and $f$ is isomorphic to the identity of $\overline{\mathcal{M}}_{g,n}$.
\end{proof}

\begin{Theorem}\label{autstack}
The automorphism group of the stack $\overline{\mathcal{M}}_{g,n}$ is isomorphic to the symmetric group on $n$ elements $S_{n}$
$$\Aut(\overline{\mathcal{M}}_{g,n})\cong S_{n}$$
for any $g,n$ such that $2g-2+n\geq 3$. Furthermore $\Aut(\overline{\mathcal{M}}_{g})$ is trivial for any $g\geq 2$.
\end{Theorem}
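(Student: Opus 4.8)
The plan is to leverage the coarse-space results already obtained, namely Theorem \ref{aut} and Proposition \ref{autmg}, together with the injectivity criterion of Proposition \ref{inj}. First I would verify that whenever $2g-2+n\geq 3$ the general $n$-pointed genus $g$ curve is automorphisms free: for $g\geq 3$ the general curve is already rigid; for $g=2$ a general marked point is not a Weierstrass point, hence is not fixed by the hyperelliptic involution; for $g=1$ and $n\geq 3$ a general marked point is not a torsion point, hence is not fixed by the elliptic involution centred at another marking; and for $g=0$ and $n\geq 5$ three of the marked points already rigidify $\mathbb{P}^{1}$. In every case the stabilizer of the general object is trivial, so Proposition \ref{inj} applies and the canonical map $\Aut(\overline{\mathcal{M}}_{g,n})\to\Aut(\overline{M}_{g,n})$ is injective.

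Combining this injection with Theorem \ref{aut} yields an inclusion $\Aut(\overline{\mathcal{M}}_{g,n})\hookrightarrow S_{n}$. For surjectivity I would observe that every permutation $\sigma\in S_{n}$ of the markings defines a genuine automorphism of the stack $\overline{\mathcal{M}}_{g,n}$ by relabeling, and that under the canonical map it is sent to the corresponding permutation automorphism of the coarse space. Hence the composite $S_{n}\to\Aut(\overline{\mathcal{M}}_{g,n})\to\Aut(\overline{M}_{g,n})\cong S_{n}$ is the identity; since the middle arrow is injective it must be an isomorphism, giving $\Aut(\overline{\mathcal{M}}_{g,n})\cong S_{n}$. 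In particular, taking $n=0$ and $g\geq 3$ (so that $2g-2\geq 4\geq 3$) one obtains $\Aut(\overline{\mathcal{M}}_{g})\cong S_{0}$, the trivial group, because $\Aut(\overline{M}_{g})$ is trivial by Proposition \ref{autmg}.

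The remaining case is $\overline{\mathcal{M}}_{2}$, which lies below the bound ($2g-2+n=2$) and is the main obstacle: here every genus $2$ curve carries the hyperelliptic involution, so the generic stabilizer is $\mathbb{Z}_{2}$ and Proposition \ref{inj} no longer applies. To circumvent this I would pass to the universal curve $\pi:\overline{\mathcal{M}}_{2,1}\to\overline{\mathcal{M}}_{2}$. Since $2\cdot 2-2+1=3$, the main part of the theorem already gives $\Aut(\overline{\mathcal{M}}_{2,1})\cong S_{1}$, the trivial group. The formation of the universal curve is functorial in the base, so any automorphism $\phi$ of $\overline{\mathcal{M}}_{2}$ lifts canonically to an automorphism $\hat{\phi}$ of $\overline{\mathcal{M}}_{2,1}$ with $\pi\circ\hat{\phi}=\phi\circ\pi$. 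As $\Aut(\overline{\mathcal{M}}_{2,1})$ is trivial, $\hat{\phi}$ is $2$-isomorphic to the identity, whence $\phi\circ\pi\cong\pi$; because $\pi$ is a smooth epimorphism this forces $\phi$ to be $2$-isomorphic to the identity, and $\Aut(\overline{\mathcal{M}}_{2})$ is trivial as well. The delicate point to make rigorous is precisely this functoriality of the universal curve under an \emph{abstract} stack automorphism of $\overline{\mathcal{M}}_{2}$, that is, the fact that the tautological family of the fine moduli stack is transported along any automorphism of the base.
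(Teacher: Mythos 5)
Your proposal is correct and follows essentially the same route as the paper: injectivity of $\Aut(\overline{\mathcal{M}}_{g,n})\rightarrow\Aut(\overline{M}_{g,n})$ via Proposition \ref{inj}, surjectivity from the permutations of the markings combined with Theorem \ref{aut} and Proposition \ref{autmg}, and, for $g=2$, a lift of any $\phi\in\Aut(\overline{\mathcal{M}}_{2})$ to an automorphism of $\overline{\mathcal{M}}_{2,1}$ followed by the triviality of $\Aut(\overline{\mathcal{M}}_{2,1})$. The ``delicate point'' you flag at the end is handled in the paper precisely by the Cartesian square $\overline{\mathcal{M}}_{2,1}\times_{\overline{\mathcal{M}}_{2}}\overline{\mathcal{M}}_{2}\cong\overline{\mathcal{M}}_{2,1}$, i.e., by pulling back the universal curve along $\phi$, which is the rigorous form of the functoriality you invoke.
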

\begin{proof}
For any $g,n$ in our range the general point of $\overline{\mathcal{M}}_{g,n}$ has trivial automorphism group. So by Proposition \ref{inj} the morphism of groups
$$\Aut(\overline{\mathcal{M}}_{g,n})\rightarrow\Aut(\overline{M}_{g,n})$$
is injective. By Theorem \ref{aut} and \cite[Theorem 4.3]{BM2} we know that $\Aut(\overline{M}_{g,n})\cong S_{n}$ for the values of $g$ and $n$ we are considering. Since any permutation of the marked points in an automorphism of $\overline{\mathcal{M}}_{g,n}$ we conclude that 
$$\Aut(\overline{\mathcal{M}}_{g,n})\cong\Aut(\overline{M}_{g,n})\cong S_{n}.$$
Since the general curve of genus $g\geq 3$ is automorphisms free the morphism
$$\Aut(\overline{\mathcal{M}}_{g})\rightarrow\Aut(\overline{M}_{g})$$
is injective. We conclude by Proposition \ref{autmg}. In the case $g = 2$ consider the fiber product
  \[
  \begin{tikzpicture}[xscale=3.5,yscale=-1.2]
    \node (A0_0) at (0, 0) {$\overline{\mathcal{M}}_{2,1}\times_{\overline{\mathcal{M}}_{2}}\overline{\mathcal{M}}_{2}\cong\overline{\mathcal{M}}_{2,1}$};
    \node (A0_1) at (1, 0) {$\overline{\mathcal{M}}_{2,1}$};
    \node (A1_0) at (0, 1) {$\overline{\mathcal{M}}_{2}$};
    \node (A1_1) at (1, 1) {$\overline{\mathcal{M}}_{2}$};
    \path (A0_0) edge [->]node [auto] {$\scriptstyle{\psi}$} (A0_1);
    \path (A1_0) edge [->]node [auto] {$\scriptstyle{\phi}$} (A1_1);
    \path (A0_1) edge [->]node [auto] {$\scriptstyle{\pi_{1}}$} (A1_1);
    \path (A0_0) edge [->]node [auto] {$\scriptstyle{}$} (A1_0);
  \end{tikzpicture}
  \]
where $\phi\in\Aut(\overline{\mathcal{M}}_{2})$. Since $\phi$ is an automorphism $\psi$ also is an automorphism. By the previous part of the proof we know that $\Aut(\overline{\mathcal{M}}_{2,1})\cong\Aut(\overline{M}_{2,1})$ is trivial. So $\psi = Id_{\overline{\mathcal{M}}_{2,1}}$ and therefore $\phi = Id_{\overline{\mathcal{M}}_{2}}$.
\end{proof}

As we saw in Proposition \ref{autm12} the case $g = 1, n = 2$ is pathological from the point of view of the automorphisms. Since $\Aut(\overline{M}_{1,2})\cong (\mathbb{C}^{*})^{2}$ the injectivity of the morphism $\Aut(\overline{\mathcal{M}}_{1,2})\rightarrow\Aut(\overline{M}_{1,2})$
does not say to much on $\Aut(\overline{\mathcal{M}}_{1,2})$. 

Since all the automorphisms of $\overline{M}_{1,2}$ are toric we expect them to disappear on the stack. In the following proposition we prove that $\Aut(\overline{\mathcal{M}}_{1,2})$ is trivial exploiting the particular form of its canonical divisor.

\begin{Proposition}\label{stackm12}
The only automorphism of the moduli stack $\overline{\mathcal{M}}_{1,2}$ is the identity.
\end{Proposition}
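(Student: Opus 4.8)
The plan is to drive the argument by the canonical class of the stack, using the injectivity of $\Aut(\overline{\mathcal{M}}_{1,2})\to\Aut(\overline{M}_{1,2})$ from Proposition \ref{inj} only to control the boundary on the coarse side. First I would compute $K_{\overline{\mathcal{M}}_{1,2}}$. Starting from the standard expression $K_{\overline{\mathcal{M}}_{1,2}}=13\lambda+\psi_{1}+\psi_{2}-2\delta_{irr}-2\delta_{0,2}$ for the stack and inserting the genus one relations $\lambda=\frac{1}{12}\delta_{irr}$ (pulled back from $\overline{\mathcal{M}}_{1,1}$) together with the $\psi$-comparison formula $\psi_{1}=\psi_{2}=\lambda+\delta_{0,2}$ for the forgetful map $\overline{\mathcal{M}}_{1,2}\to\overline{\mathcal{M}}_{1,1}$, the class $\delta_{0,2}$ cancels and one finds $K_{\overline{\mathcal{M}}_{1,2}}=15\lambda-2\delta_{irr}=-\frac{3}{4}\delta_{irr}$. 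Thus the canonical class is a negative rational multiple of the single irreducible boundary divisor $\Delta_{irr}$: this is the particular form I intend to exploit.

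Next, since any automorphism $\phi$ of the stack satisfies $\phi^{*}K_{\overline{\mathcal{M}}_{1,2}}=K_{\overline{\mathcal{M}}_{1,2}}$, it preserves the class $\delta_{irr}$, and since $\Delta_{irr}$ is its unique irreducible effective representative, $\phi$ preserves $\Delta_{irr}$ itself. As recorded after Theorem \ref{wbu}, $\Delta_{irr}$ is nef with $\Delta_{irr}^{2}=0$ and is a fibre of the forgetful fibration $\pi\colon\overline{\mathcal{M}}_{1,2}\to\overline{\mathcal{M}}_{1,1}$, so its numerical class determines $\pi$. Hence $\phi$ carries fibres of $\pi$ to fibres of $\pi$ and descends to an automorphism $\bar\phi$ of $\overline{\mathcal{M}}_{1,1}$. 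The key geometric input is that $\overline{\mathcal{M}}_{1,2}$ is the universal curve over $\overline{\mathcal{M}}_{1,1}$, so the fibre over a general $[C,p]$ is the elliptic curve $C$ itself; since $\phi$ restricts to an isomorphism from this fibre onto the fibre over $\bar\phi([C,p])$, the two elliptic curves are isomorphic, whence $\bar\phi$ preserves the $j$-invariant of the general point and induces the identity on the coarse space $\overline{M}_{1,1}\cong\mathbb{P}^{1}$. Consequently $\phi$ preserves every fibre $C$ of $\pi$.

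The fibre $C$ over a general point meets the boundary only where the moving marking collides with $p$, that is, in the single point $C\cap\Delta_{0,2}$. By Proposition \ref{inj} the automorphism $\phi$ induces $\tilde\phi\in\Aut(\overline{M}_{1,2})$, which preserves $\Delta_{0,2}$ by Lemma \ref{delta02}, so $\phi$ preserves $\Delta_{0,2}$; combined with the preservation of each fibre this forces $\phi|_{C}$ to fix that one point, i.e. to fix the origin $p$. Therefore $\phi|_{C}\in\Aut(C,p)=\{\mathrm{id},\eta\}$ for the general elliptic curve, where $\eta$ is the elliptic involution, and by continuity the choice is constant over all fibres. If $\phi|_{C}=\mathrm{id}$ on the general fibre then $\phi=\mathrm{id}_{\overline{\mathcal{M}}_{1,2}}$. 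If instead $\phi|_{C}=\eta$ on every fibre, then $\phi$ is the fibrewise elliptic involution $(C,p,q)\mapsto(C,p,-q)$; but the global involution of the universal curve, as in Remark \ref{un}, provides an isomorphism $(C,p,q)\cong(C,p,-q)$ natural in families, hence a $2$-isomorphism between $\phi$ and the identity functor, so again $\phi=\mathrm{id}$ in $\Aut(\overline{\mathcal{M}}_{1,2})$. Either way the automorphism group is trivial.

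The main obstacle I anticipate is the canonical class computation at the start: one must use the formula for the canonical of the \emph{stack}, where every boundary divisor carries coefficient $-2$, as opposed to the coarse space $\overline{M}_{1,2}$ whose canonical acquires a $-3$ along $\Delta_{0,2}$ because of the generic elliptic involution of the elliptic tail. It is precisely this uniform coefficient that makes $\delta_{0,2}$ cancel and renders $K_{\overline{\mathcal{M}}_{1,2}}$ proportional to $\Delta_{irr}$ alone; without it the $K$-argument would not single out the fibration $\pi$, and the rest of the proof would lose its starting point. A secondary point requiring care is the identification of the fibre of $\pi$ with $C$ as a genuine curve (via the universal curve structure), since it is this, and not the coarse ruled surface structure, that makes the fibrewise isomorphism force $j$-invariance.
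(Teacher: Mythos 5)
Your proposal is correct and follows essentially the same route as the paper's own proof: compute $K_{\overline{\mathcal{M}}_{1,2}}=-\frac{3}{4}\delta_{irr}$, use invariance of the canonical class to force compatibility with the forgetful fibration $\pi_{1}$, show the induced map on $\overline{\mathcal{M}}_{1,1}$ is trivial, and then kill the residual fibrewise automorphism (the elliptic involution) using the preservation of $\Delta_{0,2}$ (Lemma \ref{delta02}) together with Remark \ref{un} and the injectivity of $\Aut(\overline{\mathcal{M}}_{1,2})\rightarrow\Aut(\overline{M}_{1,2})$. One small inaccuracy: your claim that $\Delta_{irr}$ is the \emph{unique} irreducible effective representative of its class is false, since $\delta_{irr}=12\lambda$ is the fibre class and every fibre of $\pi_{1}$ represents it; this is harmless, however, because the step you actually need — that the nef, square-zero fibre class determines the fibration and hence that $\phi$ carries fibres to fibres — is exactly what you (and the paper) use.
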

\begin{proof}
An application of the Grothendieck-Riemann-Roch theorem \cite[Section 3E]{HM} gives the following formula for the canonical class of $\overline{\mathcal{M}}_{1,2}$
$$K_{\overline{\mathcal{M}}_{1,2}} = 13\lambda -2\delta +\psi \in \Pic_{\mathbb{Q}}(\overline{\mathcal{M}}_{1,2}).$$
The Picard group $\Pic_{\mathbb{Q}}(\overline{\mathcal{M}}_{1,2})$ is freely generated by $\lambda$ and the boundary classes, furthermore the following relations hold \cite[Theorem 2.2]{AC}:
$$\delta_{irr} = 12\lambda,\: \psi = 2\lambda + 2\delta_{0,2}.$$
We can write the canonical class in terms of the boundary divisors as
$$K_{\overline{\mathcal{M}}_{1,2}} = \frac{13}{12}\delta_{irr}-2\delta_{irr}-2\delta_{0,2}+\frac{2}{12}\delta_{irr}+2\delta_{0,2} = -\frac{3}{4}\delta_{irr}.$$
Note that $\delta_{irr}$ is a fiber of the forgetful morphism $\pi_{1}:\overline{\mathcal{M}}_{1,2}\rightarrow\overline{\mathcal{M}}_{1,1}$. Any automorphism $\phi$ of $\overline{\mathcal{M}}_{1,2}$ preserves the canonical bundle, that is $\phi^{*}K_{\overline{\mathcal{M}}_{1,2}} = K_{\overline{\mathcal{M}}_{1,2}}$ in $\Pic_{\mathbb{Q}}(\overline{\mathcal{M}}_{1,2})$.\\ 
Since $K_{\overline{\mathcal{M}}_{1,2}}$ is a multiple of the fiber $\delta_{irr}$ the fibration $\pi_{1}\circ\phi$ factorizes through $\pi_{1}$ (recall that by Remark \ref{un} on $\overline{\mathcal{M}}_{1,2}$ the forgetful morphisms induce the same fibration). So we have the following commutative diagram:
  \[
  \begin{tikzpicture}[xscale=2.3,yscale=-1.2]
    \node (A0_0) at (0, 0) {$\overline{\mathcal{M}}_{1,2}$};
    \node (A0_1) at (1, 0) {$\overline{\mathcal{M}}_{1,2}$};
    \node (A1_0) at (0, 1) {$\overline{\mathcal{M}}_{1,1}$};
    \node (A1_1) at (1, 1) {$\overline{\mathcal{M}}_{1,1}$};
    \path (A0_0) edge [->]node [auto] {$\scriptstyle{\phi}$} (A0_1);
    \path (A0_0) edge [->]node [auto,swap] {$\scriptstyle{\pi_{1}}$} (A1_0);
    \path (A0_1) edge [->]node [auto] {$\scriptstyle{\pi_{1}}$} (A1_1);
    \path (A1_0) edge [->]node [auto] {$\scriptstyle{\overline{\phi}}$} (A1_1);
  \end{tikzpicture}
  \]
Let $[C,p]\in \overline{\mathcal{M}}_{1,1}$ be a general point and let $[C^{'},p^{'}] = \overline{\phi}([C,p])$ be its image. Then $\alpha : = \phi_{|\pi_{1}^{-1}([C,p])}$ defines an isomorphism between $C$ and $C^{'}$. If $q^{'} = \alpha(p)$ then there exists an automorphism $\tau^{'}$ of $C^{'}$ mapping $q^{'}$ to $p^{'}$. So $\tau^{'}\circ\alpha$ is an isomorphism between $C$ and $C^{'}$ mapping $p$ to $p^{'}$. This means that $[C,p] = [C^{'},p^{'}]$, $\overline{\phi}$ is the identity and $\phi$ restricts to an automorphism of the fiber of $\pi_{1}$, furthermore by Lemma \ref{delta02} has to preserve the boundary divisor $\delta_{0,2}$. The general fiber of $\pi_{1}$ is a general elliptic curve, so it has only two automorphisms. Clearly both these automorphisms act trivially on $\overline{\mathcal{M}}_{1,2}$, so $\phi = Id_{\overline{\mathcal{M}}_{1,2}}$.
\end{proof}

\subsubsection*{Acknowledgements}
I thank \textit{Massimiliano Mella} for many helpful comments, \textit{Barbara Fantechi} for useful discussions and suggestions. Finally, I would like to thank \textit{Mattia Talpo} and \textit{Fabio Tonini} for pointing me out \cite{ACV} and for useful discussions on automorphisms of moduli stacks.

\end{document}